\newtheorem{theorem}{Theorem}[section]
\newtheorem{corollary}[theorem]{Corollary}
\newtheorem{lemma}[theorem]{Lemma}
\newtheorem{proposition}[theorem]{Proposition}
\newcommand{\imod}[1]{\allowbreak\mkern4mu({\operator@font mod}\,\,#1)}
\newcommand{\Irr}{{\mathrm {Irr}}}
\newcommand{\Out}{{\mathrm {Out}}}
\newcommand{\PSL}{{\mathrm {PSL}}}
\newcommand{\GL}{{\mathrm {GL}}}
\newcommand{\PSp}{{\mathrm {PSp}}}
\newcommand{\PSU}{{\mathrm {PSU}}}
\newcommand{\SL}{{\mathrm {SL}}}
\newcommand{\SU}{{\mathrm {SU}}}
\newcommand{\PGL}{{\mathrm {PGL}}}
\newcommand{\PGU}{{\mathrm {PGU}}}
\newcommand{\GU}{{\mathrm {GU}}}
\newcommand{\SSS}{\mathrm{S}}
\newcommand{\A}{\mathrm{A}}
\newcommand{\Chevie}{{\sf Chevie}}
\newcommand{\Magma}{{\sf Magma}}
\newcommand{\Atlas}{{\sf Atlas}}
\theoremstyle{definition}
\newtheorem{problem}{Problem}
\begin{document}
\title[\textbf{Zeros of primitive characters}]{\textbf{Zeros of Primitive Characters of Finite Groups}}

\author{Sesuai Yash Madanha}
\address{Sesuai Yash Madanha, Department of Mathematics and Applied Mathematics, University of Pretoria, Private Bag X20, Hatfield, Pretoria 0028, South Africa}
\address{DST-NRF Centre of Excellence in Mathematical and Statistical Sciences (CoE-MaSS)}
\email{madanhasy@gmail.com}

\thanks{The author acknowledges the support of DST-NRF Centre of Excellence in Mathematical and Statistical Sciences (CoE-MaSS). Opinions expressed and conclusions arrived at are those of the author and should not necessarily be attributed to the CoE-MaSS.}

\subjclass[2010]{Primary 20C15}

\date{\today}

\keywords{primitive characters, zeros of characters, Burnside's theorem, character degrees}

\begin{abstract}
We classify finite non-solvable groups with a faithful primitive complex irreducible character that vanishes on a unique conjugacy class. Our results answer a question of Dixon and Rahnamai Barghi and suggest an extension of Burnside's classical theorem on zeros of characters.
\end{abstract}

\maketitle

%%%%%%%%%%%%%%%%%%%%%%%%%%%%%%%%%%%%%%%%%%%%%%%%%
%----------------------------------------------------------------------------

\section{Introduction}\label{s:intro}

This work is a continuation of what we began in \cite{Mad18q}, the classification of finite non-solvable groups with a faithful primitive complex irreducible character that vanishes on exactly one conjugacy class. In \cite{Mad18q}, we completed the work for groups with a non-abelian composition factor isomorphic to a sporadic simple group, an alternating group, $ \A _{n} $, $ n\geq 5 $ or a special linear group $ \PSL_{2}(q) $, $ q\geq 4 $. In this article we finish the classification for all finite non-solvable groups. This is a contribution to the more general problem of the classification of finite groups with an irreducible character that vanishes on exactly one conjugacy class (see \cite{Zhm79,DB07,Qia07, BT-V15, Mad18q} for more work on this problem). We begin by looking at the following problem:\\ 
\begin{problem}\label{classifyp-powerorder}
For each quasisimple group $M$, classify all faithful complex irreducible characters $\chi$ such that there exists some prime $ p $ such that 
\begin{itemize}
\item[(i)] $ \chi $ vanishes on elements of the same $p$-power order;
\item[(ii)] the number of conjugacy classes that $\chi$ vanishes on is at most the size of the outer automorphism group of the group $M/Z(M)$;
\item[(iii)] $ Z(M) $ is cyclic and of $ p $-power order.
\end{itemize}
\end{problem}
For convenience we shall define the following property for a finite group $ M $. We shall say \eqref{e:star} holds for $ M $ if:
\[\mbox{\emph{A faithful irreducible character $ \chi $ of $ M $ has properties (i)-(iii) of  Problem \ref{classifyp-powerorder}.} \label{e:star} \tag{$\star$}}\] 

We completely solve Problem \ref{classifyp-powerorder} in this article:

\begin{theorem}\label{classificationp-powerorder}
Let $ M $ be a quasisimple group. If \eqref{e:star} holds for $ M $, then $ M $ is one of the following:
\begin{itemize}
\item[(1)] $ M=\PSL_{2}(5) $, $ \chi (1)=3 $ or $ \chi (1)=4 $;
\item[(2)] $ M= \SL_{2}(5) $, $ \chi (1)=2 $ or $ \chi (1)=4 $;
\item[(3)] $ M=3{\cdot} \A_{6} $, $ \chi(1)=9 $;
\item[(4)] $ M= \PSL_{2}(7) $, $ \chi(1)=3 $;
\item[(5)] $ M= \PSL_{2}(8) $, $ \chi (1)=7 $;
\item[(6)] $ M=\PSL_{2}(11) $, $ \chi(1)=5 $ or $ \chi(1)=10 $;
\item[(7)] $ M= \PSL_{2}(q)$, $ \chi(1)=q $, where $ q\geq 5 $;
\item[(8)] $ M=\PSU_{3}(4) $, $ \chi(1)=13 $;
\item[(9)] $ M=$  $^{2}\mathrm{B}_{2}(8)$, $ \chi(1)=14 $.
\end{itemize}
\end{theorem}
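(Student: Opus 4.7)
The plan is to combine the author's earlier work with a case analysis over the remaining families of quasisimple groups. By \cite{Mad18q}, if $S := M/Z(M)$ is sporadic, an alternating group $\A_n$ with $n\geq 5$, or $\PSL_2(q)$ with $q\geq 4$, then $M$ and $\chi$ fall into items (1)--(7) of the statement (covering in particular the exceptional covers $\SL_2(5)$ and $3{\cdot}\A_6$). It therefore remains to treat quasisimple $M$ whose quotient $S$ is a finite simple group of Lie type different from $\PSL_2(q)$, and to show that the only surviving pairs are $\PSU_3(4)$ with $\chi(1)=13$ and ${}^{2}\mathrm{B}_{2}(8)$ with $\chi(1)=14$.

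Write $\ell$ for the defining characteristic of $S$ and set $q=\ell^f$. The key tension to exploit is that $|\Out(S)|$ is small (at most $2df$ for classical $S$ with $d\in\{1,2,3,4\}$, and bounded by an absolute constant for exceptional types), while the number of conjugacy classes of elements of prime-power order in $S$ grows with $q$ and with the rank. I split into two cases. If $p=\ell$, condition (iii) forces $M$ to be a central $\ell$-cover of $S$, and the $p$-power elements of $M$ are precisely the unipotent ones. Since there are at least $r+1$ unipotent classes in $S$, where $r$ is the untwisted rank, condition (ii) rules out all $S$ of rank $\geqs 2$ except for finitely many small-$q$ possibilities; these, together with the rank-one twisted families ${}^{2}\mathrm{B}_{2}(q)$, $\PSU_3(q)$, and ${}^{2}\mathrm{G}_{2}(q)$, can be narrowed down by using that any faithful $\chi$ vanishing on every unipotent class has degree divisible by $|S|_\ell$ (the Steinberg degree), and by comparing against the lists of character degrees in \Chevie\ and the \Atlas. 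The surviving characters are precisely those in (8) and (9). If $p\neq\ell$, condition (iii) forces $p$ to divide $|Z(M)|$, which restricts $S$ to a short list governed by the Schur multipliers of finite simple groups; the $p$-power elements of $M$ are then semisimple, and Zsigmondy's theorem produces semisimple classes in pairwise non-conjugate maximal tori, so that any $\chi$ vanishing on all of them would contradict (i) or would produce more than $|\Out(S)|$ vanishing classes, contradicting (ii).

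The main obstacle is the defining-characteristic analysis for the rank-one twisted families ${}^{2}\mathrm{B}_{2}(q)$ and $\PSU_3(q)$, where the number of unipotent classes is small and $|\Out(S)|$ is of comparable size, so the generic counting argument does not suffice. Here I expect to work directly with the generic character tables, listing primitive candidates and verifying \eqref{e:star} for each. The two sporadic examples $\PSU_3(4)$ and ${}^{2}\mathrm{B}_{2}(8)$ emerge in this way, and excluding all other $q$ in these families will require careful use of Zsigmondy divisors of $q^2+1$ in the unitary case and of $q^2\pm q\sqrt{2q}+1$ in the Suzuki case, together with a check that for larger $q$ the mandatory vanishing classes already exceed $|\Out(S)|$.
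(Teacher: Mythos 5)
Your reduction to the Lie-type case via \cite{Mad18q} matches the paper, but the case division you build on top of it contains a fatal error. You claim that when $p$ differs from the defining characteristic, condition (iii) of \eqref{e:star} ``forces $p$ to divide $|Z(M)|$''. It does not: the trivial group is cyclic of $p$-power order for every prime $p$, so for simple $M$ condition (iii) is vacuous and places no restriction on $p$ at all. This matters decisively, because both surviving Lie-type examples --- $\PSU_{3}(4)$ with $\chi(1)=13$ and $^{2}\mathrm{B}_{2}(8)$ with $\chi(1)=14$ --- have $Z(M)=1$ and vanish precisely on the semisimple classes of elements of order $13$, a prime coprime to the defining characteristic $2$. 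Your dichotomy would route these to the ``$p\neq\ell$'' branch and discard them, contradicting the very statement being proved; worse, it silently omits the bulk of the work, namely analysing simple groups of Lie type whose character vanishes only on semisimple elements of some fixed prime-power order unrelated to $Z(M)$.

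A second gap: in the defining-characteristic branch you treat condition (i) as if it said that $\chi$ vanishes on \emph{all} unipotent classes (hence your count of $r+1$ unipotent classes against $|\Out(S)|$ and your appeal to ``degree divisible by $|S|_{\ell}$''). Condition (i) only says that the classes on which $\chi$ \emph{does} vanish all consist of elements of one fixed $p$-power order; it gives no lower bound on how many such classes there are, so neither the counting step nor the defect-zero conclusion follows --- and indeed the two actual examples have degrees $13$ and $14$, not divisible by $|S|_{2}=2^{6}$, so your own filter would eliminate them. What is actually needed, and what the paper supplies, is an a priori guarantee of \emph{where} a non-linear character must vanish: the results of Malle--Saxl--Weigel and Malle--Navarro--Olsson that every non-Steinberg character vanishes on regular elements of Zsigmondy prime order in one of two specified maximal tori $T_{1}$, $T_{2}$; the L\"ubeck--Malle theorem producing zeros on $l$-singular elements when the $l$-rank is at least $3$; and the Lusztig-series argument (if $\chi$ does not vanish on either torus then $\mathbf{C}_{\mathcal{M}^{*}}^{\circ}(s^{*})$ contains both dual tori, forcing $s^{*}$ central and $\chi$ unipotent) that handles faithful characters of the covers. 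These tools force $\chi$ to vanish on elements of two distinct orders, contradicting (i), or on more classes than $|\Out(M/Z(M))|$, contradicting (ii). None of this machinery appears in your proposal, so the elimination of the infinitely many remaining families is not actually carried out.
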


Using Theorem \ref{classificationp-powerorder}, we classify finite non-solvable groups with a faithful primitive irreducible character that vanishes on one conjugacy class. We showed in \cite{Mad18q} that it is sufficient to only consider automorphism groups of the groups in Theorem \ref{classificationp-powerorder}.

\begin{theorem}\label{classificationoneclass}
Let $ G $ be a finite non-solvable group. Then $ \chi \in \Irr (G) $ is faithful, primitive and vanishes on one conjugacy class if and only if $ G $ is one of the following groups:
\begin{itemize}
\item[(1)] $ G = \PSL_{2}(5) $, $ \chi (1)=3 $ or $ \chi (1)=4 $;
\item[(2)] $ G= \SL_{2}(5) $, $ \chi (1)=2 $ or $ \chi (1)=4 $;
\item[(3)] $ G\in \{\A_{6}{:}2_{2},~ \A_{6}{:}2_{3},~ 3{\cdot} \A_{6}{:}2_{3}\} $, $ \chi(1)=9 $ for all such $ G $;
\item[(4)] $ G=\PSL_{2}(7) $, $ \chi (1)= 3 $;
\item[(5)] $ G=\PSL_{2}(8){:}3 $, $ \chi (1)=7 $;
\item[(6)] $ G=\PGL_{2}(q) $, $ \chi (1)=q $, where $ q\geq 5 $;
\item[(7)] $ G=$  $^{2}\rm{B_{2}}(8){:}3 $, $ \chi(1)=14 $.
\end{itemize}
\end{theorem}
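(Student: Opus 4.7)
The plan is to combine Theorem~\ref{classificationp-powerorder} with the reduction established in \cite{Mad18q}. There it is shown that if $G$ is a finite non-solvable group with a faithful primitive irreducible character vanishing on a unique conjugacy class, then $G$ possesses a normal quasisimple subgroup $M$ for which property~\eqref{e:star} holds, and moreover $G$ embeds into the automorphism group of $M$ in a controlled way. By Theorem~\ref{classificationp-powerorder}, the subgroup $M$ must appear in the list (1)--(9). The remaining task is therefore, for each such pair $(M,\chi_0)$ with $\chi_0\in\Irr(M)$ satisfying \eqref{e:star}, to determine exactly which intermediate groups $M \trianglelefteq G \leq \Aut(M)$ carry an irreducible character lying above $\chi_0$ that is faithful, primitive, and vanishes on precisely one $G$-conjugacy class.

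For each fixed pair $(M,\chi_0)$ I would carry out three steps. First, apply Clifford theory to parametrise $\Irr(G\mid\chi_0)$ via the inertia subgroup $I_G(\chi_0)$ and its character-theoretic extensions of $\chi_0$. Second, determine the fusion of the vanishing classes of $\chi_0$ under the action of $G/M$: by Problem~\ref{classifyp-powerorder}(ii) there are at most $|\Out(M/Z(M))|$ such classes, and for $\chi\in\Irr(G\mid\chi_0)$ to vanish on a single $G$-class they must form a single $G/M$-orbit and no new vanishing classes may appear in the outer cosets. Third, verify primitivity of $\chi$, that is, rule out $\chi$ being induced from any maximal subgroup of $G$.

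The small cases (1)--(5), (8) and (9), together with (6) for small $q$, can be settled by direct inspection of the \Atlas\ and the explicit character tables of the groups involved, with \Magma\ assistance where convenient. The only genuinely uniform piece of work is the infinite family (7), $M=\PSL_2(q)$ with $\chi_0$ the Steinberg character of degree $q$. This character vanishes precisely on the non-semisimple classes of $M$: one class when $q$ is even and two when $q$ is odd. For $q$ odd the two unipotent classes fuse in $\PGL_2(q)$, and the Steinberg character extends to a character of $\PGL_2(q)$ of degree $q$ vanishing on exactly that one class. Intermediate extensions involving nontrivial field automorphisms either produce new vanishing classes in the outer cosets or disturb primitivity, and so contribute nothing further.

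I expect the main obstacle to be the uniform primitivity argument for the Steinberg extension in $\PGL_2(q)$: one must show it is not induced from any maximal subgroup of $\PGL_2(q)$, which amounts to invoking Dickson's classification of the subgroups of $\PSL_2(q)$ and ruling out, by a degree and fusion analysis, induction from Borel, dihedral, and field-subgroup candidates. A secondary but purely finite difficulty is the fusion-and-primitivity check for the covers $3{\cdot}\A_6$, $\PSU_3(4)$, and ${}^{2}\mathrm{B}_2(8)$, whose outer automorphism groups admit several intermediate subgroups that behave differently on the vanishing classes of $\chi_0$; these are handled by direct \Atlas\ inspection together with a standard Clifford analysis.
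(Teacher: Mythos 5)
Your overall architecture is the same as the paper's: invoke the reduction from \cite{Mad18q} to get a normal quasisimple $M$ satisfying \eqref{e:star}, feed this into Theorem~\ref{classificationp-powerorder}, and then decide, for each surviving pair $(M,\chi_0)$, which almost simple overgroups admit a faithful primitive character over $\chi_0$ vanishing on a single class. The paper in fact outsources all of cases (1)--(6) to \cite[Theorems 1.2 and 1.3]{Mad18q} and only treats the new Lie-type candidates ($\PSU_3(4)$, which must be eliminated, and $^{2}\mathrm{B}_2(8){:}3$, which survives); your proposal redoes the $\PSL_2(q)$ family from scratch, and your observation that primitivity forces the inertia group of $\chi_0$ to be all of $G$ (so that only fusion of the vanishing classes and new zeros in outer cosets matter) is the right Clifford-theoretic frame.

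There is, however, a genuine gap in your uniform treatment of family (7). You assert that for $M=\PSL_2(q)$ with $\chi_0$ the Steinberg character, ``intermediate extensions involving nontrivial field automorphisms either produce new vanishing classes in the outer cosets or disturb primitivity, and so contribute nothing further.'' This is false at $q=9$: besides $\A_6{:}2_2=\PGL_2(9)$, the group $\A_6{:}2_3=M_{10}$ (the extension by the product of the diagonal and field automorphisms) also fuses the two unipotent classes of $\A_6$, and its outer coset contains only $2$-elements (orders $4$ and $8$), hence no $3$-singular elements on which the extended Steinberg character could vanish; the resulting degree-$9$ character is faithful, primitive and vanishes on exactly one class. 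This is precisely why case (3) of the theorem exists, and your blanket assertion would miss it. To repair the argument you must isolate the $q=p^{2m}$ subgroups of type $\PSL_2(q).\langle\delta\phi^m\rangle$ and show that for $q>9$ their outer cosets do contain $p$-singular elements (or that primitivity fails), rather than dismissing all field-type extensions at once. A smaller bookkeeping point: your final paragraph lists $3{\cdot}\A_6$ among the sporadic checks but not $\A_6=\PSL_2(9)$ itself, which is where this exception lives.
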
 

The result below follows easily:

\begin{corollary}\label{answertodixon}
Let $ G $ be a finite non-abelian simple group and let $ \chi \in \Irr(G) $. If $ \chi $ vanishes on exactly one conjugacy class, then one of the following holds:
\begin{itemize}
\item[(1)] $ G=\PSL_{2}(5) $, $ \chi(1)=3 $;
\item[(2)] $ G=\PSL_{2}(7) $, $ \chi(1)=3 $;
\item[(3)] $ G=\PSL_{2}(2^{a}) $, $ \chi(1)=2^{a} $, where $ a\geqslant 2 $.
\end{itemize}
\end{corollary}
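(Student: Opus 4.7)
\emph{Proof plan.} My plan is to derive the corollary from Theorem \ref{classificationoneclass} by specialising to the case where $G$ is simple non-abelian. Since $\chi$ vanishes on some class it is nontrivial, and since $G$ is simple the kernel of $\chi$ must be trivial, so $\chi$ is automatically faithful. The key technical point is to show that $\chi$ is primitive; once this is in hand, Theorem \ref{classificationoneclass} applies directly.

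For the primitivity step, I would argue by contradiction: suppose $\chi=\psi^G$ for some $\psi\in\Irr(H)$ with $H<G$ proper. The induction formula forces $\chi(g)=0$ whenever $g$ has no conjugate in $H$, so $G\setminus\bigcup_{x\in G}xHx^{-1}$ is contained in the vanishing set of $\chi$. This complement is a nonempty union of conjugacy classes of $G$ (a proper subgroup cannot cover a finite group by conjugates), so the one-class hypothesis forces it to consist of exactly one class. This is a very strong restriction on the pair $(G,H)$, which I plan to rule out using the counting estimate $|\bigcup_{x}xHx^{-1}|\leq 1+(|H|-1)[G:\Normalizer_G(H)]$ together with the classification of maximal subgroups of finite simple groups, or more directly by appealing to the imprimitivity reduction carried out in \cite{Mad18q}.

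With $\chi$ both faithful and primitive, Theorem \ref{classificationoneclass} yields the candidate list, from which I would discard the non-simple items. Items (2), (3), (5) and (7) involve either a nontrivial centre or a nontrivial outer cyclic extension of a simple group and are therefore not simple. Item (1) contributes $G=\PSL_2(5)$ with $\chi(1)\in\{3,4\}$; item (4) contributes $G=\PSL_2(7)$ with $\chi(1)=3$; and item (6), $G=\PGL_2(q)$ with $q\geq 5$, is simple precisely when $q$ is even, yielding $G=\PSL_2(2^a)$ with $\chi(1)=2^a$ for $a\geq 3$.

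Finally, under the exceptional isomorphism $\PSL_2(5)\cong\PSL_2(4)\cong\A_5$, the degree-$4$ character in item (1) of Theorem \ref{classificationoneclass} is the Steinberg character of $\PSL_2(4)$, corresponding to the $a=2$ case of the family $\PSL_2(2^a)$. Merging this with the $a\geq 3$ contribution from item (6) produces the single clause $\PSL_2(2^a)$ with $a\geq 2$. Together with $(\PSL_2(5),\chi(1)=3)$ and $(\PSL_2(7),\chi(1)=3)$, this yields exactly the three clauses stated in the corollary. The primitivity step is the only nontrivial obstacle; the remainder is a routine sifting of Theorem \ref{classificationoneclass}.
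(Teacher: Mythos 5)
Your overall route is sound in outline: faithfulness is automatic for a simple group, the sifting of the list in Theorem \ref{classificationoneclass} is carried out correctly, and your merging of the degree-$4$ character of $\PSL_2(5)\cong\PSL_2(4)$ from item (1) with the even-$q$ part of item (6) to produce the single clause $\PSL_2(2^a)$, $a\geq 2$, is exactly right. The paper itself offers no written proof (it asserts the corollary ``follows easily''), so the comparison is really about whether your plan closes the one step you yourself flag as nontrivial, namely primitivity.

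That step is a genuine gap as you have set it up. The counting estimate $|\bigcup_x xHx^{-1}|\leq 1+(|H|-1)[G:\Normalizer_G(H)]$ only shows that the derangement set has size at least $[G:H]-1$; it cannot rule out the configuration you need to exclude, because simple primitive permutation groups whose derangements form a single conjugacy class do exist. For instance, $\A_5$ acting on the six cosets of $D_{10}$ has precisely the $3$-cycles as derangements, one class. So no contradiction arises at the level of your estimate; what saves the day in such examples is that no \emph{irreducible} character is induced from the relevant subgroup, or that the induced character also vanishes inside $\bigcup_x xHx^{-1}$, and verifying this in general requires the classification of transitive groups with all derangements conjugate due to Burness and Tong-Viet \cite{BT-V15} (cited in the introduction for exactly this circle of ideas), not elementary counting. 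Your fallback of ``appealing to the imprimitivity reduction carried out in \cite{Mad18q}'' is also not clearly available: the reduction from that paper quoted here (its Theorem 3.3) takes primitivity as a hypothesis. A cleaner derivation, and the one most consistent with the paper's stated dependencies, bypasses primitivity altogether: by \cite[Theorem 5.1]{MNO00} the unique vanishing class consists of elements of prime power order, so \eqref{e:star} holds for $G$ (conditions (ii) and (iii) are trivial for a simple group); Theorem \ref{classificationp-powerorder} then yields a short list of candidates, and direct inspection of character tables discards those whose characters vanish on more than one class ($\PSL_2(8)$ in degree $7$, $\PSL_2(11)$ in degrees $5$ and $10$, the Steinberg characters of $\PSL_2(q)$ for odd $q$, $\PSU_3(4)$ in degree $13$, and $^2\mathrm{B}_2(8)$ in degree $14$), leaving exactly the three clauses of the corollary.
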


Corollary \ref{answertodixon} positively answers a question posed by Dixon and Rahnamai Barghi \cite[Remark 11]{DB07}.

We now look at what our results imply with regards to the classical theorem of Burnside on zeros of characters. There have been some generalizations of Burnside's theorem (see \cite{MNO00}, \cite{BO04} and \cite{Nav01}).

Burnside's theorem can be rewritten as follows:

\begin{theorem} (\textbf{Burnside's Theorem})
Let $ G $ be a finite group and let $ \chi \in \Irr (G) $. If $ \chi(1) $ is divisible by a prime, then $ \chi $ vanishes on at least one conjugacy class. 
\end{theorem}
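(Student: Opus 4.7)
The hypothesis ``$\chi(1)$ is divisible by a prime'' is equivalent to $\chi(1) > 1$, so the statement is the classical Burnside theorem that every non-linear irreducible character has a zero. My plan is to run the standard Galois-theoretic contradiction argument.

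Assume for contradiction that $\chi(g) \neq 0$ for every $g \in G$, and set $P = \prod_{g \in G} \chi(g)$. The first step is to show $P$ is a nonzero rational integer. It is an algebraic integer as a product of sums of roots of unity, and for any Galois automorphism $\sigma$ of $\mathbb{Q}(\zeta_{|G|})/\mathbb{Q}$, say $\zeta \mapsto \zeta^{t}$ with $\gcd(t,|G|)=1$, one has $\sigma(\chi(g)) = \chi(g^{t})$; since $g \mapsto g^{t}$ is a bijection of $G$, the whole product $P$ is fixed by $\sigma$. Hence $P \in \mathbb{Z}$, and by the standing assumption $|P| \geq 1$.

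Next, since $\overline{\chi(g)} = \chi(g^{-1})$ and inversion is a bijection of $G$, one has $\overline{P} = P$ and
$$P^{2} \;=\; \prod_{g \in G} \chi(g)\,\chi(g^{-1}) \;=\; \prod_{g \in G} |\chi(g)|^{2} \;\geq\; 1.$$
On the other hand, first orthogonality gives $\sum_{g} |\chi(g)|^{2} = |G|$, so the arithmetic mean of the $|G|$ nonnegative reals $|\chi(g)|^{2}$ equals $1$. The AM--GM inequality therefore yields $\prod_{g} |\chi(g)|^{2} \leq 1$, with equality iff all these numbers coincide.

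The two bounds pinch, forcing equality throughout AM--GM: $|\chi(g)| = \chi(1)$ for every $g \in G$, i.e.\ $G = Z(\chi)$. Then the representation affording $\chi$ sends every group element to a scalar matrix (a unitary $n \times n$ matrix whose trace has absolute value $n$ must be scalar), so $\chi = \chi(1)\lambda$ for a linear character $\lambda$; irreducibility of $\chi$ forces $\chi(1)=1$, contradicting $\chi(1) > 1$. The only delicate point is the Galois-invariance step establishing $P \in \mathbb{Z}$ (one needs invariance of the entire product, not of its individual factors); after that, AM--GM and the scalar-matrix observation close the argument immediately.
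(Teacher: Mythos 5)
Your argument is correct. Note, however, that the paper does not prove this statement at all: it is quoted as Burnside's classical theorem (merely rephrased so that ``$\chi(1)>1$'' reads ``$\chi(1)$ is divisible by a prime''), so there is no proof in the paper to compare against. What you give is essentially the standard Galois-plus-AM--GM proof, in a mildly streamlined global form: the textbook version (e.g.\ Isaacs, Theorem 3.15) applies the same two ingredients to the products $\prod\chi(g)$ taken over the generators of each cyclic subgroup of $G$, which partition $G\setminus\{1\}$, and then sums the resulting inequalities $\sum_{g\in X}|\chi(g)|^{2}\geq |X|$ against $\sum_{g\in G}|\chi(g)|^{2}=|G|$; your single product over all of $G$ packages the same Galois-invariance observation ($g\mapsto g^{t}$ permutes $G$ when $\gcd(t,|G|)=1$) into one step, at the cost of needing the full AM--GM equality analysis at the end. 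One small simplification is available in your last paragraph: equality in AM--GM forces $|\chi(g)|^{2}$ to equal the common mean $1$ for \emph{every} $g$, in particular $\chi(1)^{2}=1$, which contradicts $\chi(1)>1$ immediately; the detour through $G=Z(\chi)$ and scalar matrices is correct but unnecessary.
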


Using \cite[Propositions 1(i) and 4]{DB07} and Theorem \ref{classificationoneclass} we have the following theorem:
\begin{theorem}\label{burnside'sprimitivegeneralization}
Let $ G $ be a finite group whose non-abelian composition factors are not isomorphic to $ ^{2}\rm{B}_{2}(8) $. Let $ \chi \in \Irr(G) $ be primitive. If $ \chi(1) $ is divisible by two distinct prime numbers, then $ \chi $ vanishes on at least two conjugacy classes. 
\end{theorem}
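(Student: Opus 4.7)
The plan is a short reduction-plus-classification argument. Suppose, for contradiction, that a primitive character $\chi\in\Irr(G)$ with $\chi(1)$ divisible by two distinct primes vanishes on at most one conjugacy class. Burnside's theorem forces $\chi$ to vanish on at least one class, so the supposition becomes: $\chi$ vanishes on exactly one class.

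First I would pass to the quotient $\bar G := G/\ker\chi$. The character $\chi$ descends to a faithful primitive irreducible character $\bar\chi$ of $\bar G$ of the same degree, whose zero-classes correspond bijectively to those of $\chi$; and the non-abelian composition factors of $\bar G$ form a subset of those of $G$, so the hypothesis excluding $^{2}\mathrm{B}_{2}(8)$ is inherited. Hence I may replace $G$ by $\bar G$ and assume $\chi$ is faithful.

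Next I would split on whether $G$ is solvable. If $G$ is solvable, I invoke \cite[Propositions 1(i) and 4]{DB07}, which force a faithful primitive character of a solvable group vanishing on a single conjugacy class to have prime-power degree; this contradicts the hypothesis on $\chi(1)$. If $G$ is non-solvable, I apply Theorem~\ref{classificationoneclass}. In cases (1)--(6) of that classification the degree $\chi(1)$ lies in $\{2,3,4,7,9\}\cup\{q:q\geq 5,\,q\text{ a prime power}\}$, each of which is a prime power, again contradicting the assumption that $\chi(1)$ has two distinct prime divisors. The only remaining possibility is case (7), $G={}^{2}\mathrm{B}_{2}(8){:}3$ with $\chi(1)=14=2\cdot 7$, but there $^{2}\mathrm{B}_{2}(8)$ is a composition factor of $G$, contradicting the hypothesis.

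I do not expect a substantive obstacle here: every step is either a routine kernel reduction (primitivity, the zero-class count, and the composition-factor restriction all descend cleanly to $G/\ker\chi$) or a direct inspection of the degrees listed in Theorem~\ref{classificationoneclass}. The only moving part is the solvable/non-solvable dichotomy, which is dictated by the fact that Theorem~\ref{classificationoneclass} is stated only for non-solvable groups, so the solvable case must be absorbed separately via \cite{DB07}; and the role of the hypothesis on composition factors is precisely to rule out the unique degree--$14$ exception in the classification.
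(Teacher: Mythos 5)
Your proposal is correct and matches the paper's (unwritten but clearly intended) argument exactly: the paper derives this theorem precisely from \cite[Propositions 1(i) and 4]{DB07} for the reduction to the faithful case and the solvable case, together with the degree inspection in Theorem~\ref{classificationoneclass}, with the hypothesis on composition factors serving only to exclude case (7). No discrepancies to report.
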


The non-solvable group $ ^{2}\rm{B}_{2}(8){:}3 $ is a real exception by Theorem \ref{classificationoneclass}. Hence this extends Burnside's theorem when the character is primitive. It would be interesting to know if the primitivity is necessary for Theorem \ref{burnside'sprimitivegeneralization} to hold.

\section{Preliminaries}

In this section we present some results we will need to use. A lot of work on zeros of characters of quasisimple groups is found in \cite{MSW94,LM99,Mal99,MNO00,LM15,LMS16,LM16}. We shall use most of these results in this article. We need some definitions before we present the recent work of L\"ubeck and Malle \cite{LM16}. Let $ \Phi_{n} $ denote the $ n $-th cyclotomic  polynomial over $ \mathbb{Q} $. Let $ m, n $ be positive integers. Then by $ m|| n $, we mean that $ m|n $ but $ m^{2}\nmid n $. If $ l>2 $ not dividing $ q $, the multiplicative order of $ q $ modulo $ l $ is denoted by $ d_{l}(q) $.

\begin{theorem}\cite[Theorem 1]{LM16}\label{p-rank3}
Let $ l > 2 $ be a prime and $ M $ a finite quasisimple group of $ l $-rank at least $ 3 $. Then for any non-linear character $ \chi \in \Irr (M) $ there exists an $ l $-singular element $ g\in M $ with $ \chi (g)=0 $, unless either $ M $ is a finite group of Lie type in characteristic $ l $, or $ l=5 $ and one of the following hold:
\begin{itemize}
\item[(1)] $ M=\PSL_{5}(q) $ with $ 5||(q-1) $ and $ \chi $ is unipotent of degree $ \chi (1)=q^{2}\Phi _{5} $;
\item[(2)] $ M=\PSU_{5}(q) $ with $ 5||(q+1) $ and $ \chi $ is unipotent of degree $ \chi (1)=q^{2}\Phi _{10} $;
\item[(3)] $ M=Ly $ and $ \chi (1)\in \{ 48174, 11834746\} $; or
\item[(4)] $ M=\mathrm{E}_{8}(q) $ with $ q $ odd, $ d_{l}(q)=4 $ and $ \chi $ is one character in the Lusztig-series of type $ D_{8} $.
\end{itemize}
\end{theorem}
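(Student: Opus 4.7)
The natural approach is a case analysis via the classification of finite simple groups on $M/Z(M)$: the alternating groups, the sporadic groups (including the Tits group), the groups of Lie type in the defining characteristic $l$ (which are excluded in the statement), and the groups of Lie type in cross characteristic. The $l$-rank $\geqs 3$ hypothesis should be used throughout to supply a rich enough pool of $l$-singular classes; in particular it guarantees an elementary abelian $l$-subgroup of rank at least $3$, and for Lie-type groups in cross characteristic it forces $d_l(q) \leqs e/3$ where $e$ is the rank of the relevant torus.

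For the alternating groups $M = \A_n$ or their covers, I would first note that $l$-rank $\geqs 3$ forces $n \geqs 3l$, which is already a strong constraint. The Murnaghan–Nakayama rule applied to an $l$-singular class supported on a partition with many $l$-hooks, combined with the combinatorics of $l$-cores and $l$-quotients, should produce a zero for every non-linear irreducible character; in the few very small configurations the claim can be verified directly from the known character tables of $\A_n$ and $2.\A_n$. Sporadic groups with $l$-rank $\geqs 3$ form a short finite list, and for each one a finite check against the \Atlas{} (or using \Chevie{}/\Magma) rules out all but the two Lyons characters that appear as exception~(3).

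The main technical work is the Lie-type cross-characteristic case. Here I would use Lusztig's Jordan decomposition of characters and attach to $\chi$ its semisimple label $s$ in the dual group, together with a unipotent constituent in $E(C_{G^*}(s), 1)$. The strategy is: fix a Sylow $d$-torus $T$ of $G$ (with $d = d_l(q)$) large enough that $|T|$ is divisible by $l^3$, and exploit the fact that $\chi$ takes value zero on a regular element of $T$ unless $\chi$ lies in the $d$-Harish-Chandra series of $(T,\lambda)$ for some $\lambda$; combined with degree estimates this leaves only finitely many candidate characters per series. Refining this using the explicit formulae of Lübeck for unipotent degrees, and Lusztig's product formula $\chi(1) = \pm |G^*:C_{G^*}(s)|_{p'}\cdot \psi(1)$ for the unipotent piece, one shows that a non-vanishing character must have its unipotent label supported on a very small subset of partitions/symbols. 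The surviving candidates produce exactly items~(1), (2), (4) of the exception list, corresponding to the degenerate situations where the $d$-Harish-Chandra series consists of a single character whose values on $T$ can be computed in closed form and happen to be non-zero on all $l$-singular elements.

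The hardest part, and where the bulk of the effort goes, will be the exceptional and classical families at the prime $l=5$, because the inequality margins shrink: for $\PSL_n(q)$ and $\PSU_n(q)$ one is forced into an exact analysis of Steinberg-type and semisimple characters parametrised by hooks of length $5$, and for $\mathrm{E}_8(q)$ with $d_5(q)=4$ one must trace through the explicit Lusztig series of type $D_8$ to isolate the single surviving character. The cleanest way to close the argument is probably a uniform generic lower bound on the number of $l$-singular vanishing classes, valid for $q$ or $n$ sufficiently large, together with an explicit computer-assisted check for the finitely many remaining small cases — which is the pattern already established in the earlier papers \cite{MSW94,Mal99,MNO00,LM15,LMS16}.
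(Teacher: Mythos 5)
First, a point of comparison: the paper does not prove this statement at all --- it is quoted verbatim as \cite[Theorem 1]{LM16} and used as a black box, so there is no internal proof to measure your attempt against. Judged on its own terms, your proposal correctly identifies the global architecture that L\"ubeck and Malle (and the earlier work \cite{MNO00,LM15,LMS16}) actually use: reduce via the classification of finite simple groups, handle $\A_n$ and its covers by noting that $l$-rank at least $3$ forces $n\geq 3l$ and applying the Murnaghan--Nakayama rule, dispose of the sporadic groups by a finite \Atlas{} check (isolating the two Lyons characters), and attack the cross-characteristic Lie-type case through Jordan decomposition, Sylow $d$-tori and a non-vanishing criterion of the type recorded in Lemma \ref{vanishingresultTorus}.

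However, as written the proposal is a programme rather than a proof: every genuinely hard step is deferred with ``should'', ``probably'' or ``the bulk of the effort goes''. The decisive gap is in the Lie-type case. The criterion you invoke --- that $\chi$ vanishes on a regular element of a suitable torus unless its Lusztig series is heavily constrained --- only eliminates characters whose semisimple label is non-central; it says nothing about the unipotent characters themselves, and it is exactly there that the exceptions (1), (2) and (4) live. Closing that case requires computing values of unipotent characters of classical groups on $l$-singular elements that need not be regular semisimple, and the tool for this is the Murnaghan--Nakayama rule for unipotent characters which is the main theorem of \cite{LM16}; your outline neither states nor substitutes for it. Similarly, for $\mathrm{E}_8(q)$ with $d_5(q)=4$ the identification of the single surviving character in the Lusztig series of type $D_8$ requires explicit \Chevie{} computations, not just degree bounds. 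So the approach is the right one in outline, but the argument as given would not close without importing precisely the machinery whose development constitutes the cited paper.
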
 

Let $ \mathcal{M} $ be a simple, simply connected algebraic group over $ \mathbb{\overline{F}}_{p} $, the algebraic closure of a finite field of characteristic $ p $ and let $ F:\mathcal{M} \rightarrow \mathcal{M} $ be a Frobenius morphism such that $ M:=\mathcal{M}^{F} $, the finite group of fixed points. Let $ \mathcal{M}^{*} $ denote the dual group of $ \mathcal{M} $ with corresponding Frobenius morphism $ F^{*}: \mathcal{M}^{*}\rightarrow \mathcal{M}^{*} $. Then $ M^{*}:=(\mathcal{M}^{*})^{F^{*}} $ is the dual group of $ M $. Using Deligne-Lusztig theory, we have that irreducible characters of $ M $ are partitioned into Lusztig series $ \mathcal{E}(M,s^{*}) $ that are parametrised by conjugacy classes of semisimple elements $ s^{*} $ in the dual group $ M^{*} $. See \cite{Car85} and \cite{DM91} for basic results on Deligne-Lusztig theory of complex representations of finite groups of Lie type. 

The following lemma will be essential:

\begin{lemma} \cite[Lemma 3.2]{GM12}\label{vanishingresultTorus}
Let $ x\in M $ be semisimple and let $ \chi \in \mathcal{E}(M,s^{*}) $ be an irreducible character of $ M $ with $ \chi (x)\not= 0 $. Then there is a maximal torus $ T\leq M $ with $ x\in T $ such that $ T^{*}\leq \textbf{C}_{M^{*}}(s^{*}) $ for a torus $ T^{*}\leq M^{*} $ which is a dual group of $ T $.
\end{lemma}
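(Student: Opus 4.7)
The plan is to invoke Deligne--Lusztig theory in two steps: a decomposition of $\chi$ into Deligne--Lusztig virtual characters, followed by the character formula at semisimple elements. First, every irreducible character in the Lusztig series $\mathcal{E}(M,s^{*})$ can be written as a $\mathbb{Q}$-linear combination of Deligne--Lusztig characters $R_{T}^{M}(\theta)$ where the pair $(T,\theta)$ corresponds, under duality, to a pair $(T^{*},s^{*})$ with $s^{*}\in T^{*}$. Since $\chi(x)\neq 0$, at least one summand must be nonzero on $x$: there exist a maximal torus $T_{0}\leq M$ and a character $\theta_{0}$ of $T_{0}$, with $T_{0}^{*}\ni s^{*}$, such that $R_{T_{0}}^{M}(\theta_{0})(x)\neq 0$.

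Next, I would apply the character formula for Deligne--Lusztig characters at semisimple elements (see \cite{Car85} or \cite{DM91}). For a semisimple element $x\in M$,
\[
R_{T}^{M}(\theta)(x)=\frac{1}{|\Centralizer_{M}^{\circ}(x)|}\sum_{\substack{g\in M\\ g^{-1}xg\in T}}Q_{g^{-1}Tg}^{\Centralizer_{M}^{\circ}(x)}(1)\,\theta(g^{-1}xg),
\]
so nonvanishing of $R_{T_{0}}^{M}(\theta_{0})(x)$ forces the existence of some $g\in M$ with $g^{-1}xg\in T_{0}$. Equivalently, $x\in T:=gT_{0}g^{-1}$, which is itself a maximal torus of $M$. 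The dual torus $T^{*}$ is then $M^{*}$-conjugate to $T_{0}^{*}$, so after replacing $s^{*}$ by a suitable $M^{*}$-conjugate (which does not change its class and therefore not the series $\mathcal{E}(M,s^{*})$), one obtains $s^{*}\in T^{*}$. Since $T^{*}$ is abelian, this immediately yields $T^{*}\leq \Centralizer_{M^{*}}(s^{*})$, as required.

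The main obstacle is the first step: one must justify that the class function $\chi$, restricted to the subspace spanned by $\mathcal{E}(M,s^{*})$, lies in the rational span of the Deligne--Lusztig characters indexed by pairs $(T^{*},s^{*})$ with $s^{*}\in T^{*}$. This relies on the fact that the Deligne--Lusztig induction $R_{T}^{M}$ surjects (over $\mathbb{Q}$) onto each Lusztig series, together with a careful bookkeeping of how the geometric conjugacy class of $s^{*}$ propagates through the various tori. The character formula itself is standard for semisimple elements; the only delicate point is to keep track of the $M^{*}$-conjugation used to align $s^{*}$ with the dual of the chosen torus $T$.
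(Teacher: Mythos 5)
First, a remark on context: the paper does not prove this lemma at all --- it is quoted verbatim from Guralnick and Malle \cite[Lemma 3.2]{GM12} --- so there is no in-paper argument to compare yours against. Your outline does follow the standard proof of that result, and the second half of your argument is correct as written: nonvanishing of $R_{T_{0}}^{M}(\theta_{0})$ at the semisimple element $x$ forces, via the character formula, that $x$ is conjugate into $T_{0}$, hence $x\in T:=gT_{0}g^{-1}$; and since $T^{*}$ is abelian and contains (a conjugate of) $s^{*}$, one gets $T^{*}\leq \mathbf{C}_{M^{*}}(s^{*})$.

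The genuine gap is your first step. It is false in general that an irreducible character of $M$ is a $\mathbb{Q}$-linear combination of Deligne--Lusztig characters: the span of the $R_{T}^{M}(\theta)$ is the space of \emph{uniform} class functions, which is usually a proper subspace of all class functions, and individual irreducible characters need not be uniform (already for $\SL_{2}(q)$ with $q$ odd, the two constituents of degree $(q+1)/2$ of the relevant principal series $R_{T}^{M}(\theta)$ are not uniform, since their difference is orthogonal to every Deligne--Lusztig character). Your proposed justification --- that $R_{T}^{M}$ ``surjects over $\mathbb{Q}$ onto each Lusztig series'' --- is likewise not true; the image of Deligne--Lusztig induction is exactly the uniform functions. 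The correct repair, and the one underlying \cite[Lemma 3.2]{GM12}, is to replace $\chi$ by its orthogonal projection $\chi^{\mathrm{unif}}$ onto the space of uniform functions. Since the characteristic function of a semisimple conjugacy class is itself uniform (see \cite[Proposition 12.20]{DM91} or \cite[Chapter 7]{Car85}), one has $\chi(x)=\chi^{\mathrm{unif}}(x)$ for every semisimple $x\in M$; and because $\chi\in\mathcal{E}(M,s^{*})$, the projection $\chi^{\mathrm{unif}}$ involves only those $R_{T}^{M}(\theta)$ for which $\langle\chi,R_{T}^{M}(\theta)\rangle\neq 0$, i.e.\ pairs $(T,\theta)$ dual to pairs $(T^{*},s^{*})$ with $s^{*}\in T^{*}$ up to conjugacy. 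With this substitution in place of your first step, the remainder of your argument goes through unchanged.
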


%If $ \mathbf{C}_{\mathcal{M}^{*}}(s^{*}) $ is connected, then $ \mathbf{C}_{\mathcal{M}^{*}}(s^{*}) $ reductive (\cite[Theorem 14.2]{MT11}). 
%%or \cite[Theorem 3.5.4]{Car85}).  
%Note that since $ \mathcal{M} $ is simply connected $ \mathbf{C}_{\mathcal{M}}(s) $ for a semisimple $ s $ element of $ \mathcal{M} $. 
%%(\cite[Theorem 14.16]{MT11} or \cite[Theorem 3.5.6]{Car85})
%Let $ \pi_{1}(\mathcal{M}^{*}) $ be the fundamental group of $ \mathcal{M}^{*} $. Now if the order of $ s^{*} $ is coprime to the order of fundamental group $ |\pi_{1}(\mathcal{M}^{*})| $, then by \cite[Corollary E-II.4.6]{BCCISS70}, $ \mathbf{C}_{\mathcal{M}^{*}}(s^{*}) $ is connected. The list of fundamental groups of linear algebraic is given in \cite[pg 72]{MT11}.
%%(\cite[pg 25]{Car85})
For the rest of the section, we shall present some number theory results.
 
\begin{lemma}\label{outerlessthanconjugacyclasses}
Let $ p $ be a prime and $ f $ a positive integer. Then the following statements hold:
\begin{itemize}
\item[(a)] If $ q=p^{f} >11 $, then $ 6f+1< (q^{2}-q-2)/9 $.
\item[(b)] If $ q=p^{f} \geq7 $ and $ q $ is odd, then $ 4f+1< (q^{2} - 1)/8 $.
\end{itemize}
\end{lemma}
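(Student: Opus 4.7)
The inequalities are elementary but need care because the two sides have very different growth rates: the right-hand side grows like $q^2$ while $6f+1$ (resp.\ $4f+1$) grows only logarithmically in $q$. My plan is to reduce $f$ to a logarithmic function of $q$ using the constraint $q = p^f$, and then verify that the resulting one-variable inequality holds for all $q$ above a checkable threshold.

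For part (a), since $p \geq 2$ we have $f \leq \log_{2} q$, so it suffices to prove
\[
6\log_{2} q + 1 \;<\; \frac{q^{2}-q-2}{9}
\]
for every prime power $q > 11$. I would introduce $h(x) = (x^{2}-x-2)/9 - 6\log_{2} x - 1$ and show that $h'(x) = (2x-1)/9 - 6/(x\ln 2)$ is positive once $x(2x-1) > 54/\ln 2 \approx 77.9$, which certainly holds for $x \geq 7$. Thus $h$ is increasing on $[7,\infty)$, so for all prime powers $q \geq 16$ it is enough to check the base value $h(16) = 238/9 - 25 = 13/9 > 0$. The remaining prime powers in the range $11 < q < 16$ amount to the single value $q = 13$, where $f = 1$ and one verifies directly that $7 < 154/9$.

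For part (b), since $q$ is odd we have $p \geq 3$ and hence $f \leq \log_{3} q$, so it suffices to prove
\[
4\log_{3} q + 1 \;<\; \frac{q^{2}-1}{8}
\]
for odd prime powers $q \geq 7$. Setting $k(x) = (x^{2}-1)/8 - 4\log_{3} x - 1$, the derivative $k'(x) = x/4 - 4/(x\ln 3)$ is positive as soon as $x^{2} > 16/\ln 3$, i.e.\ $x \geq 4$. Therefore $k$ is increasing on $[4,\infty)$, and checking the smallest prime power $q \geq 9$ gives $k(9) = 10 - 8 - 1 = 1 > 0$. The only remaining odd prime power in the range is $q = 7$, where $f = 1$ and the inequality $5 < 6$ is immediate.

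The main obstacle is not the calculus but the fact that the inequality is tight at the base cases $q = 16$ in (a) (margin $13/9$) and $q = 9$ in (b) (margin $1$): one cannot afford to use looser bounds such as $f \leq q$, so the logarithmic estimate $f \leq \log_{p} q$ is essential, and the tiny exceptional prime powers $q = 13$ and $q = 7$ must be treated by hand because the replacement of $f$ by $\log_{2} q$ (resp.\ $\log_{3} q$) is too generous in those cases.
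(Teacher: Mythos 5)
Your proof is correct, and all the numerical checks ($h(16)=13/9>0$, $k(9)=1>0$, the monotonicity thresholds, and the two exceptional prime powers $q=13$ and $q=7$ where the logarithmic bound on $f$ is too weak) are accurate. The paper states this lemma without any proof at all, so your argument simply supplies the omitted elementary verification; the only stylistic remark is that one could avoid calculus entirely by inducting on $f$ for each fixed $p$, but your approach via $f\leq\log_p q$ and monotonicity of the one-variable functions is perfectly sound.
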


Let $ q,n\geq 2 $ be integers. Suppose that $ (q,n)\neq (2,6) $ and if $ n=2 $ assume that $ q + 1 $ is not a power of $ 2 $.  Then by Zsigmondy's theorem \cite{Zsi92}, a Zsigmondy prime divisor $ l(n) $ always exists. The Zsigmondy prime divisor is defined as a prime $ l(n) $ such that $ l(n)\mid q^{n}-1 $ but $ l(n)\nmid \prod _{i=1}^{n-1}(q^{i}-1) $. 

\begin{lemma}\label{exceptionsforq}
Let $ q =p^{f}$ for some prime $ p $ and a positive integer $ f $. Suppose that $ a$ is a positive integer and  $b$, $ c $ be non-negative integers.
\begin{itemize}
%\item[(a)] If $ q - 1 = 2^{a}3^{b} $ and $ q + 1= 2^{c} $, then $ q=3 $ or $ 7 $;
\item[(a)] If $ q - 1=2^{c} $ and $ q + 1 = 2^{a}3^{b} $, then $ q =3, 5 $ or $ 17 $;
\item[(b)] If $ q - 1= 2^{a} $ and $ q + 1=2^{b}5^{c} $, then $ q = 3 $ or $ 9 $;
\item[(c)] If $ q - 1= 2^{a}5^{b} $ and $ q + 1=2^{c} $, then $ q = 3 $.
\end{itemize}
\end{lemma}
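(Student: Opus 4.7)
The strategy is to take the difference $(q+1) - (q-1) = 2$ and use the two given factorisations to convert each part into a purely Diophantine equation involving only the primes $2$, $3$ and $5$: namely $2^{a}3^{b} - 2^{c} = 2$ in (a), $2^{b}5^{c} - 2^{a} = 2$ in (b), and $2^{c} - 2^{a}5^{b} = 2$ in (c). These will then be attacked using the $2$-adic valuation together with elementary parity arguments.

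For each equation, the first step is to dispose of the boundary cases (where $c$ is small in (a) and (c), or $a$ is small in (b)) by direct inspection; these already produce the solution $q = 3$ in every part. In the remaining range, I would divide through by $2$ and compare parities, which forces the exponent of $2$ on the odd-prime side to drop to zero. This reduces the three equations to the Catalan-type forms
\[
3^{b} = 2^{c-1} + 1, \qquad 5^{c} = 2^{a-1} + 1, \qquad 2^{c-1} = 5^{b} + 1,
\]
respectively. Checking small exponents in each one produces the remaining solutions $q = 5, 17$ in (a), $q = 9$ in (b), and no new solution in (c).

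The one nontrivial step is to show that the three Catalan-type equations above admit no further solutions, and this is where I expect the main obstacle to lie. I would split on the parity of the odd-prime exponent. If that exponent is odd, reduction modulo $8$ pins down the corresponding power of $2$ to lie in $\{2, 4\}$, and a direct check finishes the case. If the exponent is even, say $2k$, then factoring $p^{2k} - 1 = (p^{k}-1)(p^{k}+1)$ for $p \in \{3,5\}$ writes a power of $2$ as a product of two factors whose gcd divides $2$; both must therefore be powers of $2$ differing by $2$, forcing $\{p^{k}-1,\, p^{k}+1\} = \{2, 4\}$ and hence $p^{k} = 3$. For $p = 5$ this is impossible, and for $p = 3$ it recovers only the already-known $k = 1$. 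This parity-plus-factoring trick is the only place where real content enters; everything else is a matter of careful case bookkeeping, and no deep input (such as Mihailescu's theorem on Catalan's conjecture) is required.
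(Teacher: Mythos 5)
Your proof is correct, and the initial reduction (subtracting the two factorisations to get $2^{a}3^{b}-2^{c}=2$ etc., then extracting the factor of $2$ to force the exponent of $2$ on the odd-prime side down to $1$, leaving $3^{b}=2^{c-1}+1$, $5^{c}=2^{a-1}+1$ and $2^{c-1}=5^{b}+1$) is exactly what the paper does. Where you genuinely diverge is in finishing off these three equations: the paper invokes Zsigmondy's theorem on $3^{b}-1$ and $5^{c}-1$ (a Zsigmondy prime divisor is odd, contradicting these being powers of $2$ unless $b\leq 2$, resp.\ $c=1$) and cites a lemma of Huppert--Blackburn for $5^{b}+1=2^{c-1}$, whereas you replace both external inputs with the elementary mod-$8$ argument in the odd-exponent case and the factorisation $p^{2k}-1=(p^{k}-1)(p^{k}+1)$ into two powers of $2$ differing by $2$ in the even-exponent case. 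Your route is entirely self-contained and arguably preferable for a statement this small; the paper's route is shorter on the page because Zsigmondy is already set up and used throughout. One small wrinkle to tidy when writing this up: your even-exponent factoring trick is stated for $p^{2k}-1$ and so does not literally apply to the third equation, whose odd-prime side is $5^{b}+1$; but there the observation that $5^{b}+1\equiv 2\pmod 4$ for all $b\geq 1$ forces $2^{c-1}=2$ and kills the equation outright, with no parity split needed, so nothing is actually missing.
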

\begin{proof}
%(a) If $ b=0 $, then $ q - 1=2^{a} $ and since $ q + 1=2^{c} $, we have $ q=3 $. If $ 2=(q + 1)-(q -1)=2^{c}-2^{a}3^{b}=2^{a}(2^{c-a}-3^{b}) $, then $ a = 1 $ and $ 2^{c-1}-3^{b}=1 $, that is, $ 3^{b} + 1 = 2^{c-1} $. By \cite[IX, Lemma 2.7]{HB92}, $ b=1 $ and so $ q=7 $.
(a) If $ b=0 $, then $ q = 3 $. Otherwise we have $ 2=2^{a}(3^{b}-2^{c-a}) $, so $ a = 1 $ and $ 3^{b} - 1 = 2^{c-a}$. By Zsigmondy's Theorem, there is a Zsigmondy prime $ l\mid 3^{b} - 1 $ except when $ b\leq 2 $. If $ b=1 $, then $ q = 5 $ and if $ b=2 $, then $ q = 17 $.

(b) If $ c=0 $, then $ q=3 $. If $ c\geq 1 $, then $ a > b $. Now $ 2=2^{b}5^{c}-2^{a}=2^{b}(5^{c}-2^{a-b}) $. Since $ b\geq 1 $, we have that $ b=1 $ and $ 5^{c}-2^{a-b}=1 $, that is, $ 5^{c}- 1= 2^{a-1} $. By Zsigmondy's Theorem \cite{Zsi92}, there exist a Zsigmondy prime $ l\mid 5^{c}-1 $ unless when $ c=1 $. Hence $ q + 1=10 $ and so $ q = 9 $.

(c) If $ b=0 $, then $ q=3 $. If $ b\geq 1 $, then $ 2=2^{c}-2^{a}5^{b}=2^{a}(2^{c-a}-5^{b}) $. Hence $ a=1 $ and $ 2^{c-1}-5^{b}=1 $ which implies that $ 5^{b} + 1= 2^{c-1} $. By \cite[IX, Lemma 2.7]{HB92}, $ b=1 $ which can only happen when $ q=11 $. This is a contradiction since $ q + 1=12\neq 2^{c} $.
\end{proof}

\section{Quasisimple groups with a character vanishing on elements of the same order} \label{vanishingonsameorder}

In this section we prove Theorem \ref{classificationp-powerorder}. In view of \cite[Theorem 1.2]{Mad18q}, it is sufficient to only consider quasisimple groups $ M $ such that $ M/Z(M) $ is isomorphic to a finite simple group of Lie type distinct from $ \PSL_{2}(q) $.
\begin{theorem}\label{Lietypep-power}
Let $ M $ be a quasisimple group such that $ M/Z(M) $ is a finite simple group of Lie type over a field of characteristic $ p $ distinct from $ \PSL_{2}(q) $. If \eqref{e:star} holds for $ M $, then $ M $ is one of the following:
\begin{itemize}
\item[(1)] $ M=\PSU_{3}(4) $, $ \chi(1)=13 $;
\item[(2)] $ M= $ $^{2}\rm{B}_{2}(8)$, $ \chi(1)=14 $.
\end{itemize}
\end{theorem}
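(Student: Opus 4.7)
Let $p_0$ denote the defining characteristic of $M/Z(M)$ and $p$ the prime furnished by Problem~\ref{classifyp-powerorder}. Condition (i) of $(\star)$ says that every nontrivial zero of $\chi$ is a $p$-element of one fixed order; equivalently, for each prime $\ell\neq p$, $\chi$ does not vanish on any $\ell$-singular element of $M$. My first step is to apply Theorem~\ref{p-rank3} to each odd prime $\ell\neq p$ dividing $|M|$: if the $\ell$-rank of $M$ were at least $3$, the theorem would force $\chi$ to vanish on some $\ell$-singular class, contradicting (i), unless $M$ lies in one of the listed exceptions. Exceptions (1)--(2) put $M$ in characteristic $\ell$ and so apply only when $\ell=p_0$; case (3) is excluded since $M$ is of Lie type; case (4) is the single family $\mathrm{E}_8(q)$. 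Consequently the $\ell$-rank of $M$ is at most $2$ for every odd prime $\ell\notin\{p,p_0\}$, which bounds the Lie rank of $M$ and reduces the problem to the rank-one families ${}^2\mathrm{B}_2(q)$, ${}^2\mathrm{G}_2(q)$, the rank-two families $\PSL_3(q)$, $\PSU_3(q)$, $\PSp_4(q)$, $\mathrm{G}_2(q)$, ${}^3\mathrm{D}_4(q)$, ${}^2\mathrm{F}_4(q)$, together with the special cases $\PSL_5(q)$, $\PSU_5(q)$ and $\mathrm{E}_8(q)$ permitted by Theorem~\ref{p-rank3}.

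For each candidate I would use Lemma~\ref{vanishingresultTorus} to control the semisimple classes on which $\chi\in\mathcal{E}(M,s^*)$ is nonzero: any such semisimple $x$ must lie in a maximal torus $T$ whose dual $T^*$ is contained in $\Centralizer_{M^*}(s^*)$. Combined with (i), this usually rules out all but a short list of possible Lusztig series $\mathcal{E}(M,s^*)$, because the presence of any semisimple element $x$ of order coprime to $p$ that fails to lie in such a torus forces $\chi(x)=0$, violating (i). Where a direct torus argument does not suffice, a Zsigmondy prime $\ell\neq p,p_0$ of $q^n-1$ supplies an $\ell$-singular class outside every admissible torus, again contradicting (i). Condition (ii) then provides a numerical bound: the number of $M$-classes of elements of the fixed $p$-power order on which $\chi$ must vanish grows polynomially in $q$, while $|\Out(M/Z(M))|$ grows only logarithmically, so the estimates of Lemmas~\ref{outerlessthanconjugacyclasses} and~\ref{exceptionsforq} eliminate all sufficiently large $q$ in each family.

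The remaining finite list --- small values of $q$ in each family, the specific exceptional groups of Theorem~\ref{p-rank3}, and the low-rank cases of $\mathrm{E}_8(q)$ --- is then dispatched by direct inspection of the character tables using \Atlas, \Chevie, and \Magma. Only $\PSU_3(4)$ with $\chi(1)=13$ and ${}^2\mathrm{B}_2(8)$ with $\chi(1)=14$ survive, and in both cases one verifies from the table that $\chi$ vanishes on a single class of elements of $2$-power order, that $Z(M)=1$, and that (ii) holds trivially. The principal obstacle is the case analysis in the infinite rank-two families, especially $\PSU_3(q)$ and $\PSL_3(q)$, where (i) alone leaves many candidate characters: here one must combine the torus restriction of Lemma~\ref{vanishingresultTorus}, fine control of semisimple classes through Lusztig parametrisation, and the arithmetic estimates of Lemmas~\ref{outerlessthanconjugacyclasses} and~\ref{exceptionsforq} to reduce all the way down to the two surviving exceptions.
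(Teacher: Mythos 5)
Your overall architecture (use Theorem~\ref{p-rank3} to exploit primes of rank at least $3$, use Lemma~\ref{vanishingresultTorus} to force zeros on regular semisimple elements outside the tori dual to $\Centralizer_{M^{*}}(s^{*})$, use condition (ii) against $|\Out|$ for cyclic tori of prime order, and finish small cases by computer) matches the paper's toolkit, but there are two genuine logical gaps. First, the prime $p$ in \eqref{e:star} is existentially quantified, so when $Z(M)=1$ it is not pinned down in advance. Your opening claim --- that $\ell$-rank at least $3$ for an odd $\ell\neq p$ already contradicts (i) --- therefore does not stand alone: Theorem~\ref{p-rank3} produces one $\ell$-singular zero, and if all zeros of $\chi$ happened to be $\ell$-elements of a single order, condition (i) would simply hold with $p=\ell$. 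A contradiction requires \emph{two} zeros of distinct orders (or one zero of non-prime-power order), and the paper manufactures the second zero from the results of \cite{MSW94} and \cite[Lemmas 5.3--5.9]{MNO00}, which show $\chi$ is of $\ell_{1}$- or $\ell_{2}$-defect zero for Zsigmondy primes attached to two specific maximal tori. Your proposal never invokes this input. Relatedly, Lemma~\ref{vanishingresultTorus} is vacuous for unipotent characters, since then $s^{*}=1$ and every torus lies in $\Centralizer_{M^{*}}(s^{*})$; the Steinberg character in particular needs a separate argument (the paper uses $p$-defect zero together with the prime graph results of Suzuki and Williams), and your torus mechanism cannot replace it.

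Second, the asserted reduction to the rank-one and rank-two families plus $\PSL_{5}(q)$, $\PSU_{5}(q)$ and $\mathrm{E}_{8}(q)$ is unjustified. To eliminate $\PSL_{n}(q)$, $\PSU_{n}(q)$ ($n\geq 4$), $\PSp_{2n}(q)$, $\PSO^{\pm}$-type groups and $\mathrm{F}_{4}$, $\mathrm{E}_{6}$, $^{2}\mathrm{E}_{6}$, $\mathrm{E}_{7}$ one must actually exhibit an odd prime $\ell\notin\{2,p,p_{0}\}$ of $\ell$-rank at least $3$, and such a prime can fail to exist: $q\in\{2,3\}$, or $q\pm 1$ a power of $2$ (or $2^{a}3^{b}$, $2^{a}5^{b}$), are real obstructions. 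Handling these is the bulk of the paper's proof --- it is exactly where Lemma~\ref{exceptionsforq} enters, together with explicit computations for groups such as $\PSL_{5}(3)$, $\PSL_{6}(3)$, $\PSL_{7}(3)$, $\PSU_{7}(3)$, $\PSU_{8}(3)$ and the symplectic and orthogonal groups over $\F_{3}$. Your sentence about a Zsigmondy prime of $q^{n}-1$ ``supplying an $\ell$-singular class outside every admissible torus'' also has the mechanism backwards: for large $n$ such a prime has $\ell$-rank $1$, so neither Theorem~\ref{p-rank3} nor Lemma~\ref{vanishingresultTorus} forces $\chi$ to vanish there; Zsigmondy primes are used in the paper to identify elements on which $\chi$ \emph{must} vanish via the defect-zero results, not to rule tori out. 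As written, the proposal would not close the intermediate-rank classical cases or the simple (trivial-centre) cases.
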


\subsection{Classical groups} We shall show that Theorem \ref{Lietypep-power}(1) holds with a series of propositions.

We first show that the Steinberg character of a classical group of Lie type fails to satisfy \eqref{e:star}:
\begin{lemma}{\label{Steinbergclassical}}
Let $ M $ be a finite simple classical group of Lie type over a field of characteristic $ p $, distinct from $ \PSL_{2}(q) $. Then the Steinberg character $ \chi $ of $ M $ fails to satisfy \eqref{e:star}.
\end{lemma}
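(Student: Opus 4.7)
My plan is to exploit the well-known characterization of zeros of the Steinberg character: for a finite group of Lie type $M$ of characteristic $p$, the Steinberg character $\chi$ vanishes on $g \in M$ if and only if $g$ is non-semisimple, equivalently $p \mid |g|$. Suppose for contradiction that \eqref{e:star} holds for $\chi$; then there exist a prime $\ell$ and an integer $k \geq 1$ such that every zero of $\chi$ has order exactly $\ell^k$. Since $M$ contains nontrivial unipotent elements, all of $p$-power order and all zeros of $\chi$, we are forced to take $\ell = p$, and condition~(i) then demands that no element of $M$ have both a nontrivial unipotent and a nontrivial $p'$-semisimple part in its Jordan decomposition.

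To derive the contradiction, I would exhibit a commuting pair $u, s \in M$ with $u \neq 1$ unipotent and $s \neq 1$ semisimple of order coprime to $p$. Then $g := us = su$ is non-semisimple of order $|u|\cdot|s|$, which is \emph{not} a $p$-power, yet $\chi(g) = 0$ by the vanishing property above. The possibility of producing such a pair is exactly what separates $M$ from $\PSL_2(q)$: in $\PSL_2(q)$ the centralizer of a nontrivial unipotent is, modulo the centre, just a Sylow $p$-subgroup, so no such $s$ exists. But every classical simple group of Lie type of (untwisted) rank at least $2$ contains a proper Levi subgroup with a rank-one derived factor $L_0$ (of type $A_1$, $^2A_2$, etc.) together with a nontrivial complementary torus $T$ whose order divides some $q^a - \varepsilon$ with $\varepsilon = \pm 1$. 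Thus $|T|$ is coprime to $p$ and at least $2$, so choosing $u$ a transvection in $L_0$ and $s$ a nontrivial element of $T$ furnishes the required commuting pair.

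The main obstacle I anticipate is carrying out this Levi construction uniformly and concretely across the families $\PSL_n(q)$, $\PSU_n(q)$, $\PSp_{2n}(q)$, $\mathrm{P}\Omega^\pm_n(q)$, and especially handling the very smallest cases such as $\PSU_3(3)$, $\PSp_4(3)$, or $\mathrm{P}\Omega_7(3)$, where one must pick the torus carefully (possibly the $(q+1)$-torus in place of the $(q-1)$-torus) to guarantee that a nontrivial $p'$-element is present. In any such small case one can bypass the general Levi argument by exhibiting directly, from the known list of element orders of $M$, a single element of order $p\cdot\ell$ with $\ell \neq p$ prime, which is automatically non-semisimple and thus a zero of $\chi$. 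As soon as one such $g$ has been displayed, condition~(i) fails and $\chi$ cannot satisfy \eqref{e:star}, proving the lemma.
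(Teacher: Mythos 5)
Your overall strategy is the same as the paper's: both arguments use that the Steinberg character vanishes on every $p$-singular element (the paper phrases this via $p$-defect zero; you invoke the full characterization, of which only the ``vanishes on non-semisimple elements'' direction is needed), observe that the zeros therefore include nontrivial unipotent elements so the prime in condition (i) of \eqref{e:star} is forced to be $p$, and then seek a zero whose order is divisible by $p$ but is not a $p$-power. The difference is in how that last element is produced: the paper simply cites Suzuki's theorem (for $p=2$, existence of an element of order $2r$ with $r$ odd) and Williams' prime graph theorem (for $p$ odd, the component containing $p$ has size at least $2$), whereas you propose a Levi-subgroup construction of a commuting pair $u,s$ with $u\neq 1$ unipotent and $s\neq 1$ a $p'$-element.

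There is, however, a genuine gap: your argument fails for $M=\PSL_{3}(4)$, which is a simple classical group distinct from $\PSL_{2}(q)$ and so must be covered by the lemma. In $\PSL_{3}(4)$ the element orders are $1,2,3,4,5,7$, so there is \emph{no} element of order $2\ell$ with $\ell$ an odd prime; equivalently, the centralizer of an involution is a $2$-group. Your Levi construction breaks down here precisely because the complementary torus of the type-$A_1$ Levi in $\SL_{3}(4)$ has order $q-1=3=|Z(\SL_3(4))|$ and maps onto the identity in the simple quotient, and your proposed fallback --- ``exhibit directly an element of order $p\cdot\ell$'' --- is impossible since no such element exists. The lemma is still true for $\PSL_{3}(4)$, but for a different reason: the Steinberg character, being of $2$-defect zero, vanishes on all $2$-singular classes, which include elements of order $2$ \emph{and} elements of order $4$, so condition (i) already fails because the zeros do not all have the same order. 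This is exactly the exceptional case the paper flags (Suzuki's theorem holds ``except when $M\cong\PSL_{3}(4)$'') and disposes of by consulting the \Atlas{} character table. To repair your proof you must either treat $\PSL_{3}(4)$ separately along these lines, or weaken your intermediate claim from ``there is a zero of non-$p$-power order'' to ``there are zeros of two distinct orders.'' You should also be aware that the uniform existence of your commuting pair across all remaining families is essentially equivalent to the Suzuki/Williams results the paper cites, so leaning on those references would be both safer and shorter than carrying out the Levi analysis case by case.
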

\begin{proof}
Suppose that $ p=2 $. Then $ \chi $ is of $ 2 $-defect zero and so $ \chi $ vanishes on every $ 2 $-singular element of $ M $. In particular, $ \chi $ vanishes on an involution. By \cite[III, Theorem 5]{Suz61}, $ M $ has an element of order $ 2r $ for some odd prime $ r $ except when $ M\cong \PSL_{3}(4) $. The character table of $ \PSL_{3}(4) $ exhibited in the \Atlas{} \cite{CCNPW85} confirms our conclusion for this special case. We may assume that $ M $ has an element $ g $ of order $ 2r $ with $ r $ as above. Then $ \chi $ vanishes on $ g $ and so vanishes on two elements of distinct orders, contradicting (i) of \eqref{e:star}.

Now we suppose that $ p$ is odd. Then $ \chi $ is of $ p $-defect zero and so $ \chi $ vanishes on every $ p $-singular element of $ M $. In particular, $ \chi $ vanishes on a unipotent element of order $ p $. Now $ M $ has an element $ g $ of order $ pr $, where $ r $ is a prime number, since the size of the connected component of a prime graph of $ M $ containing $ p $ is at least $ 2 $ by \cite[Theorem 1]{Wil81}. Hence $ \chi(g)=0 $ and the result follows.
\end{proof}

\subsubsection{Special Linear Groups}

Let $ \mathcal{M}=\GL_{n}(\mathbb{\overline{F}}_{p}) $ and let $ F $ be the standard Frobenius map. The conjugacy classes of $ F $-stable maximal tori of $ \GL_{n}(\mathbb{\overline{F}}_{p}) $ and $ \SL_{n}(\mathbb{\overline{F}}_{p}) $ are parametrised by conjugacy classes of $ \SSS_{n} $. Recall that conjugacy classes of $ \SSS_{n} $ are parametrised by cycle shapes. If $ \mathcal{T}\leqslant \GL_{n}(\mathbb{\overline{F}}_{p}) $ corresponds to $ \lambda =(\lambda _{1},\lambda _{2},\dots, \lambda _{m}) \in \SSS_{n} $ such that $ \lambda _{1}\geq \lambda _{2}\geq \dots\geq \lambda _{m} $, then $ |T|=|\mathcal{T}^{F}|=\prod _{i=1}^{m}(q^{\lambda _{i}}-1) $ and if $ \mathcal{T}\leqslant \SL_{n}(\mathbb{\overline{F}}_{p}) $, then $ (q-1)|T|=(q-1)|\mathcal{T}^{F}|=\prod _{i=1}^{m}(q^{\lambda _{i}}-1) $. 

\begin{lemma} \cite[Lemmas 3.1 and 4.1]{LM15}\label{regularelementsSLn}
Let $ \lambda \vdash n $ be a partition, and $ \mathcal{T} $ a corresponding $ F $-stable maximal torus of $ \SL_{n}(\mathbb{\overline{F}}_{p}) $ or $ \SU_{n}(\mathbb{\overline{F}}_{p}) $. Assume that either all parts of $ \lambda $ are distinct, or $ q\geq 3 $ and at most two parts of $ \lambda $ are equal. Then $ T= \mathcal{T}^{F} $ contains regular elements. 
\end{lemma}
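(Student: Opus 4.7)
The plan is to work directly with the parametrization of $F$-stable maximal tori of $\mathcal{M} = \SL_n(\overline{\F}_p)$ (or $\SU_n(\overline{\F}_p)$) by partitions $\lambda \vdash n$, as reviewed just before the lemma. A torus $\mathcal{T}$ corresponding to $\lambda = (\lambda_1,\ldots,\lambda_m)$ has $\mathcal{T}^F$ embedding into $\prod_{i=1}^m \F_{q^{\lambda_i}}^\times$ (with the appropriate twist for the unitary case), cut out by the single multiplicative constraint $\prod_i N_{q^{\lambda_i}/q}(x_i) = 1$. Viewed inside $\GL_n(\overline{\F}_p)$, the element $(x_1,\ldots,x_m)$ has eigenvalues equal to the union of the Galois orbits $\{x_i, x_i^q, \ldots, x_i^{q^{\lambda_i-1}}\}$, and is regular precisely when these $n$ eigenvalues are pairwise distinct. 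The lemma therefore reduces to exhibiting $x_i \in \F_{q^{\lambda_i}}^\times$ each generating its field over $\F_q$, with mutually disjoint Galois orbits, subject to the norm-one condition.

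My main tool would be the M\"obius count $M(d) = \sum_{e \mid d} \mu(d/e)\,q^e$ for the number of $x \in \F_{q^d}$ of exact degree $d$ over $\F_q$, which satisfies $M(d) \geq q^d - 2q^{d/2}$ for $d \geq 2$. When all parts of $\lambda$ are distinct, the subfields $\F_{q^{\lambda_i}}$ pairwise intersect in strictly smaller subfields, so the Galois orbits overlap only in lower-order terms; the product $\prod_i M(\lambda_i)$ dominates any obstruction by an overwhelming margin and an abundance of regular tuples exists. In the second case, where at most two parts of $\lambda$ coincide, say $\lambda_i = \lambda_j = k$, I would first fix a generator $x_i$ of $\F_{q^k}$ and then count generators $x_j$ avoiding the $k$ Galois conjugates of $x_i$: this leaves at least $M(k) - k$ admissible choices, which is positive for $q \geq 3$ by the estimate above. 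The same bound is applied independently for each pair of equal parts. For $q = 2$ with two equal parts the margin collapses, which is exactly why the lemma excludes this case.

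It remains to transfer positivity from the full product to the norm-one slice. The set of regular tuples is invariant under the diagonal $\F_q^\times$-action rescaling a single coordinate, and the norm map is surjective along each such orbit, so exactly a $1/(q-1)$-fraction of regular tuples satisfies $\prod_i N_{q^{\lambda_i}/q}(x_i) = 1$; the margin secured above comfortably absorbs this loss. The main obstacle, in my view, is the unitary case: the formula for $|\mathcal{T}^F|$ replaces $q^{\lambda_i}-1$ by $q^{\lambda_i}-(-1)^{\lambda_i}$, and the generators must be selected with respect to a twisted Frobenius, so one must track signs carefully through the M\"obius estimate. The dominant term $q^{\lambda_i}$ still controls everything, but verifying that the key inequality $M(k) - k > 0$ survives the sign changes calls for a modest case analysis in the small-partition regime, which I expect to be the most delicate bookkeeping.
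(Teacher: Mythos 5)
The paper offers no proof of this lemma: it is quoted verbatim from Lassueur and Malle \cite[Lemmas 3.1 and 4.1]{LM15}, so your proposal can only be measured against that source and against correctness. Your first two steps --- identifying $\mathcal{T}^F$ inside $\prod_i\F_{q^{\lambda_i}}^{\times}$ (with the twist in the unitary case), characterising regularity by disjointness of the Galois orbits of the components $x_i$, and using the M\"obius count of field generators, with the refinement $M(k)-k>0$ for $q\geq 3$ when two parts are equal --- are sound and are essentially the standard route.

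The genuine gap is the final descent to the norm-one slice, and both of its supporting claims fail. Rescaling a single coordinate $x_i\mapsto cx_i$ with $c\in\F_q^{\times}$ multiplies the Galois orbit of $x_i$ by $c$ and can create collisions with the other orbits, so the set of regular tuples is \emph{not} invariant under this action (for two parts equal to $1$, take $c=x_jx_i^{-1}$). Moreover, along such an orbit the determinant is multiplied by $c^{\lambda_i}$, whose image is the subgroup of $\lambda_i$-th powers of $\F_q^{\times}$, a proper subgroup whenever $\gcd(\lambda_i,q-1)>1$; so the determinant is not surjective on the orbit either. Hence regular tuples need not distribute evenly over the fibres of the determinant, and your argument in fact proves a false statement: for $\lambda=(1,1)$ and $q=3$ the only regular tuples in $\F_3^{\times}\times\F_3^{\times}$ are $(1,2)$ and $(2,1)$, both of determinant $2$, so the corresponding torus of $\SL_2(3)$ contains no regular element, whereas your $1/(q-1)$-fraction count predicts one. (This case is outside the quasisimple range the paper uses, but it lies squarely inside the scope of your argument and shows the determinant condition interacts nontrivially with regularity.) A correct route is to work inside the norm-one torus from the outset: each part contributes a cyclic subgroup of order $(q^{\lambda_i}-1)/(q-1)$ (respectively $(q^{\lambda_i}-(-1)^{\lambda_i})/(q+1)$ in the unitary case), a generator of which still generates $\F_{q^{\lambda_i}}$ over $\F_q$ for $\lambda_i\geq 2$ because its order exceeds $q^{\lambda_i/2}-1$; the hypotheses $q\geq 3$ and ``at most two equal parts'' are then exactly what is needed to find disjoint Galois orbits inside these smaller subgroups. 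Your proposal never confronts these smaller subgroups, which is precisely where the stated hypotheses enter.
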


\begin{lemma}\cite[Lemma 3.2]{LM15}\label{reductivesubgroup=PGL}
Let $ \mathcal{H}\leqslant \PGL_{n}(\mathbb{\overline{F}}_{p}) $ or $ \mathcal{H}\leqslant \PGU_{n}(\mathbb{\overline{F}}_{p}) $ be a reductive subgroup containing $ F $-stable maximal tori corresponding to cycle shapes $ \lambda _{1}, \lambda _{2}, \dots, \lambda _{r} $. If no intransitive or imprimitive subgroup of $ \SSS_{n} $ contains elements of all these cycle shapes, then $ \mathcal{H}= \PGL_{n}(\mathbb{\overline{F}}_{p}) $ or $ \mathcal{H}= \PGU_{n}(\mathbb{\overline{F}}_{p}) $, respectively.
\end{lemma}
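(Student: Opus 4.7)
The plan is to pass to the Weyl group $W(\mathcal{H}) := \Normalizer_{\mathcal{H}}(T)/T$ of $\mathcal{H}$ for a fixed $F$-stable maximal torus $T\leq \mathcal{H}$, view it as a subgroup of the ambient Weyl group $W(\mathcal{G}) = \SSS_n$ (where $\mathcal{G} = \PGL_n(\overline{\F}_p)$ or $\PGU_n(\overline{\F}_p)$), and use the hypothesis to force $W(\mathcal{H}) = \SSS_n$; equality of root systems will then give $\mathcal{H} = \mathcal{G}$.

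First I would observe that each $F$-stable maximal torus $T_i \leq \mathcal{H}$ corresponding to a cycle shape $\lambda_i \vdash n$ has rank $n-1$, which equals the rank of $\mathcal{G}$, so $\mathcal{H}$ has maximal rank. Replacing $\mathcal{H}$ by its identity component (every maximal torus lies in $\mathcal{H}^\circ$), I may assume $\mathcal{H}$ is connected reductive of rank $n-1$. The standard Lang–Steinberg parametrisation of $\mathcal{H}^F$-conjugacy classes of $F$-stable maximal tori of $\mathcal{H}$ by $F$-conjugacy classes of $W(\mathcal{H})$, together with the embedding $W(\mathcal{H})\hookrightarrow W(\mathcal{G})=\SSS_n$, shows that for each $i$ the class representing $T_i$ maps to the class in $W(\mathcal{G})$ parametrising $T_i$ as a torus of $\mathcal{G}$, which by definition of cycle shape contains an element of cycle type $\lambda_i$. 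Hence $W(\mathcal{H})$, as a subgroup of $\SSS_n$, contains elements of each of the cycle shapes $\lambda_1,\dots,\lambda_r$.

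Since $\mathcal{H}$ is connected reductive of maximal rank, $W(\mathcal{H})$ is generated by the reflections associated to the roots of $\mathcal{H}$, which inside $W(\mathcal{G})=\SSS_n$ are exactly transpositions. By the hypothesis, $W(\mathcal{H})$ is neither intransitive nor imprimitive on $\{1,\dots,n\}$, so it acts primitively. Jordan's classical theorem then applies: a primitive subgroup of $\SSS_n$ containing a transposition is all of $\SSS_n$. (The degenerate case in which $\mathcal{H}$ is itself a torus, so $W(\mathcal{H})=\{1\}$ has no transposition, is excluded because $\{1\}$ is intransitive for $n\geqs 2$, contradicting the hypothesis.) Therefore $W(\mathcal{H}) = \SSS_n = W(\mathcal{G})$, so the root system of $\mathcal{H}$ equals that of $\mathcal{G}$. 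Since both are connected reductive groups of the same rank with the same root datum and $\mathcal{H}\leqs\mathcal{G}$, we conclude $\mathcal{H} = \mathcal{G}$.

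The main point requiring care is the unitary case, where $F$ acts as the graph automorphism on the root datum, so the correspondence between $F$-stable maximal tori and $F$-conjugacy classes of $W(\mathcal{G})$ uses the twisted action, and cycle shapes of tori of $\PGU_n$ are read off from partitions via the twisted parametrisation (as in Carter's book). One verifies that the key fact needed — that the element $w_i \in W(\mathcal{H})$ arising from $T_i$ has cycle type $\lambda_i$ in the natural action of $\SSS_n$ — holds uniformly in the split and twisted cases, after which the argument of the previous paragraph goes through unchanged.
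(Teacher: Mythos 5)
This lemma is quoted from \cite[Lemma 3.2]{LM15}, so the paper itself contains no proof to compare against; judging your argument on its own terms, it has a genuine gap at the step ``Hence $W(\mathcal{H})$, as a subgroup of $\SSS_n$, contains elements of each of the cycle shapes $\lambda_1,\dots,\lambda_r$.'' The fusion of torus types is \emph{not} induced by the inclusion $W(\mathcal{H})\hookrightarrow W(\mathcal{G})$ unless the reference $F$-stable maximal torus $T_0\leq\mathcal{H}$ that you use to form $W(\mathcal{H})$ is itself of split type in $\mathcal{G}$. In general, writing $T_0={}^hT$ with $h^{-1}F(h)$ mapping to $v\in W(\mathcal{G})$, the torus of $\mathcal{H}$ of type $u\in W(\mathcal{H})$ has type $\iota(u)\cdot v$ in $W(\mathcal{G})$, so the cycle shapes realised by tori of $\mathcal{H}$ are the cycle types of the \emph{coset} $\iota(W(\mathcal{H}))\,v$, not of the subgroup $\iota(W(\mathcal{H}))$. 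A concrete counterexample to your claim: take $n=4$ and $\mathcal{H}$ the image in $\PGL_4(\overline{\mathbb{F}}_p)$ of a centraliser isomorphic to $\GL_2\times\GL_2$ on which $F$ interchanges the two factors, so that $\mathcal{H}^F\cong\GL_2(q^2)$ modulo scalars. This $\mathcal{H}$ contains $F$-stable maximal tori of cycle shapes $(2)(2)$ and $(4)$, yet $W(\mathcal{H})\cong\SSS_2\times\SSS_2$ contains no $4$-cycle. Consequently your subsequent deduction (``$W(\mathcal{H})$ is primitive and contains a transposition, hence equals $\SSS_n$'') does not get off the ground in such cases.

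The repair, which is essentially the intended argument, is to note that $v$ normalises the Young subgroup $\iota(W(\mathcal{H}))$ (because $\mathcal{H}$ and $T_0$ are $F$-stable), so the coset $\iota(W(\mathcal{H}))v$ lies in the group $K=\langle \iota(W(\mathcal{H})),v\rangle$, which permutes the orbits of the Young subgroup and is therefore intransitive or imprimitive whenever $\iota(W(\mathcal{H}))\neq\SSS_n$ (the one degenerate exception being $\iota(W(\mathcal{H}))=1$ with $v$ an $n$-cycle and $n$ prime, which is why the lemma is only applied with several incompatible cycle shapes). Since $K$ contains elements of all the shapes $\lambda_i$, the hypothesis forces $\iota(W(\mathcal{H}))=\SSS_n$, and your final step then gives $\mathcal{H}=\mathcal{G}$. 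Note that this is exactly where the ``imprimitive'' clause of the hypothesis is genuinely needed: in your version it is only used to feed Jordan's theorem, whereas a transitive subgroup generated by transpositions is already all of $\SSS_n$, so your argument would ``prove'' the lemma with the imprimitivity assumption deleted --- a warning sign, since the $\GL_2(q^2)\leq\GL_4(q)$ example above shows that the weakened statement is false. Your remaining steps (maximal rank, passage to the identity component, the uniform treatment of the unitary case via a Frobenius acting trivially on $W$, and the recovery of $\mathcal{H}=\mathcal{G}$ from $W(\mathcal{H})=\SSS_n$) are fine.
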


To use this result we note that $ \mathcal{M} $ is connected reductive with a Steinberg endomorphism $ F:\mathcal{M}\rightarrow \mathcal{M} $ and $ M:=\mathcal{M}^{F} $. If $ \mathcal{T}^{*}\leqslant \mathbf{C}_{\mathcal{M}^{*}}(s^{*}) $, then since $ \mathcal{T}^{*} $ is connected we have that $ \mathcal{T}^{*}\leqslant \mathbf{C}_{\mathcal{M}^{*}}^{\circ}(s^{*}) $, a reductive subgroup of $ \mathcal{M}^{*} $ (see \cite[Theorem 14.2]{MT11}).

The table below shows Zsigmondy primes $ l_{i} $ for the corresponding tori $ T_{i} $. Note that elements of order $ l_{i} $ in the torus $ T_{i} $ are regular elements. It was shown in \cite{MNO00} that almost all characters of simple groups vanish on elements of order $ l_{1} $ or $ l_{2} $ whenever $ l_{1} $ and $ l_{2} $ exist. 

\begin{center}\label{ZsigmondyTableClassical}
Table 1\\
Tori and Zsigmondy primes for classical groups of Lie type
\end{center}
\begin{center}
\begin{tabular}{|c|c|c|c|c|}
\cline{1-5}
$ M $ & $ |T_{1}| $ & $ |T_{2}| $  & $ l_{1} $ & $ l_{2} $  \\
 \cline{1-5}
$ A_{n} $ & $ (q^{n+1}-1)/(q-1) $ & $ q^{n}-1 $ &  $ l(n+1) $ & $ l(n) $ \\
\cline{1-5}
$ ^{2}A_{n} $ ($ n\geqslant 3 $ odd) & $ (q^{n+1}-1)/(q+1) $ & $ q^{n} + 1 $ &  $ l(n+1) $ & $ l(2n) $ \\
\cline{1-5}
$ ^{2}A_{n} $ ($ n\geqslant 2 $ even) & $ (q^{n+1}+1)/(q+1) $ & $ q^{n}-1 $  & $ l(2n+2) $ & $ l(n) $ \\
\cline{1-5}
$ B_{n}, C_{n} $ ($ n\geqslant 3 $ odd) & $ q^{n} + 1 $ & $ q^{n}-1 $  & $ l(2n) $ & $ l(n) $ \\
\cline{1-5}
$ B_{n}, C_{n} $ ($ n\geqslant 2 $ even) &  $ q^{n} + 1 $ & $ (q^{n-1}+1)(q+1) $ &  $ l(2n) $ & $ l(2n-2) $  \\
\cline{1-5}
$ D_{n} $ ($ n\geqslant 5 $ odd) &  $ q^{n} - 1 $  &   $ q^{n-1}+1)(q+1) $    &  $ l(n) $ & $ l(2n-2) $ \\
\cline{1-5}
$ D_{n} $ ($ n\geqslant 4 $ even) &   $ (q^{n-1}-1)(q-1) $   &  $ (q^{n-1} + 1)(q+1) $   & $ l(n-1) $ & $ l(2n-2) $  \\
\cline{1-5}
$ ^{2}D_{n} $ ($ n\geqslant 4 $) &   $ q^{n} + 1 $   &   $ (q^{n-1} + 1)(q-1) $     & $ l(2n) $ & $ l(2n-2) $ \\
\hline
\end{tabular}
\end{center}

Since $ \PSL_{3}(2)\cong \PSL_{2}(7) $, and $ \PSL_{2}(7) $ is considered in \cite[Theorem 1.2]{Mad18q}, we may assume that $ n = 3 $ and $ q\geq 3 $ for the following result. 
\begin{proposition}\label{sl3}
Let $ M $ be a quasisimple group such that $ M/Z(M)=\PSL_{3}(q) $, where $ q\geq 3 $. Then every non-trivial faithful irreducible character of $ M $ fails to satisfy \eqref{e:star}.
\end{proposition}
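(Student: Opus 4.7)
My plan is to combine the Deligne-Lusztig parameterisation with the Zsigmondy-prime data of Table \ref{ZsigmondyTableClassical} and the class-count bound of condition (ii) to rule out every candidate character. Condition (iii) forces $M \in \{\PSL_3(q), \SL_3(q)\}$, with the latter occurring only when $p = 3$ and $3 \mid q-1$; in either case $|\Out(M/Z(M))| = 2f\gcd(3,q-1)$ is small. Lemma \ref{Steinbergclassical} already eliminates the Steinberg character, so I may restrict to the non-linear, non-Steinberg faithful characters.

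Each such $\chi$ lies in some Lusztig series $\mathcal{E}(M, s^*)$ with $s^* \in M^* \in \{\PGL_3(q), \SL_3(q)\}$. The $F$-conjugacy classes of maximal tori of $M^*$ correspond to the three cycle shapes $(1^3)$, $(2,1)$, $(3)$ of $\SSS_3$, of orders (up to central factors) $(q-1)^2$, $q^2-1$, and $q^2+q+1$. By Lemma \ref{vanishingresultTorus} combined with Lemma \ref{reductivesubgroup=PGL}, if $\chi$ is non-unipotent then at least one cycle shape $\lambda$ is not represented in $\Centralizer_{M^*}^{\circ}(s^*)$, and by Lemma \ref{regularelementsSLn} the corresponding torus of $M$ contains regular semisimple elements on which $\chi$ must vanish.

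The heart of the argument is the case analysis of $\Centralizer_{M^*}^{\circ}(s^*)$ for non-central $s^*$: this centraliser is either a maximal torus or a Levi of type $\GL_2 \times \GL_1$. In the generic subcase at least two cycle shapes are excluded, so $\chi$ vanishes on regular elements of orders divisible by both the Zsigmondy prime $l(3)$ dividing $q^2+q+1$ and by a prime divisor of $q+1$ not dividing $q-1$; these are two distinct primes, contradicting condition (i) of \eqref{e:star}. In the borderline subcase where only the Coxeter cycle shape $(3)$ is excluded, so that $s^*$ has type $\mathrm{diag}(a,a,b)$, $\chi$ vanishes on the entire $M$-conjugacy class of every regular element of the Coxeter torus; a class-count then gives well over $|\Out(M/Z(M))|$ classes of zeros, violating (ii).

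The remaining case $s^* = 1$ leaves only the reflection character of degree $q(q+1)$, whose explicit values in the generic character table of $\GL_3$ (see \cite{Car85}) exhibit zeros on elements of two distinct prime orders, violating (i). The small-$q$ exceptions $q \in \{3,4,5,7,8,9\}$ where Zsigmondy primes coincide or fail to exist I would dispatch by inspection of the character tables in the \Atlas{} \cite{CCNPW85} or via \Magma{}. The main anticipated obstacle is the borderline centraliser subcase, where a single excluded cycle shape does not by itself contradict (i) and one must pivot from a violation of (i) to a violation of (ii) via careful class counting.
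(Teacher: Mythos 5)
Your overall strategy is sound and rests on the same key ingredients as the paper: Lemmas \ref{vanishingresultTorus}, \ref{regularelementsSLn} and \ref{reductivesubgroup=PGL} to force zeros on regular elements of excluded tori, Zsigmondy primes to produce zeros of coprime orders against condition (i), and the Coxeter-torus class count against $|\Out(M/Z(M))|$ (via Lemma \ref{outerlessthanconjugacyclasses} and Nagell \cite{Nag21}) when only the shape $(3)$ is excluded. The paper organises this differently: it runs the Lusztig-series/centraliser analysis only when $Z(M)\neq 1$, and for $M=\PSL_3(q)$ it instead quotes \cite[Theorem 2.1]{MSW94} (every non-trivial non-Steinberg character vanishes on regular elements of the torus of shape $(3)$ or of shape $(2,1)$) together with \cite[Theorem 5.1]{MNO00}; that reference also absorbs your separate treatment of the degree-$q(q+1)$ unipotent character. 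Your by-hand classification of $\Centralizer^{\circ}_{\mathcal{M}^*}(s^*)$ is a legitimate substitute, but note that when the centraliser is the maximal torus of shape $(2,1)$ the excluded shapes are $(3)$ and $(1^3)$, and Lemma \ref{regularelementsSLn} does not guarantee regular elements in the split torus; in that branch you must also fall back on the Coxeter-torus count rather than on a prime dividing $q+1$.

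Two concrete soft spots. First, condition (iii) does not force $M\in\{\PSL_3(q),\SL_3(q)\}$: the exceptional Schur multiplier of $\PSL_3(4)$ admits quasisimple covers with cyclic centre of order $2$ or $4$, which satisfy (iii) with $p=2$; since you defer $q=4$ to the \Atlas{} this is repairable, but those covers must be inspected explicitly, not just $\PSL_3(4)$ itself. Second, and more substantively, your generic subcase invokes ``a prime divisor of $q+1$ not dividing $q-1$''; no such prime exists when $q$ is odd and $q+1$ is a power of $2$, and the values $q=31,127,\dots$ are not in your finite exception list, so as written the step fails there. The repair is easy and should be made explicit: the orders $(q^2+q+1)/\gcd(3,q-1)$ and $(q^2-1)/\gcd(3,q-1)$ of the two relevant tori are coprime, so nontrivial zeros drawn from both tori already lie on elements of coprime orders, violating (i) without naming a specific prime. (The paper's own workaround in this situation is to note that $|T_{2}|$ is even and invoke \cite[Remark 2.2]{LM16}, respectively \cite[Theorem 5.1]{MNO00}.)
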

\begin{proof} Using explicit character tables in the \Atlas{} \cite{CCNPW85}, we may assume that $ q\geq 13 $. First consider $ Z(M)\neq 1 $. Now, $ |Z(M)|=3 $, $ 3\mid (q-1) $ and by \eqref{e:star}, $ \chi $ vanishes on a $ 3 $-element. Note that unipotent characters are not faithful when $ Z(M)\neq 1 $. Hence we may assume that $ \chi $ is not unipotent. Then $ \chi $ lies in the Lusztig series $ \mathcal{E}(M, s^{*})$ of a semisimple element $ s^{*} $ in the dual group $ M^{*}=\PGL_{3}(q) $. Let $ T_{1} $ and $ T_{2} $ be tori of $ M $ corresponding to the partitions $ (3) $ and $ (2)(1) $, respectively. By Lemma \ref{regularelementsSLn}, the tori $ T_{1} $ and $ T_{2} $ contain regular elements. We claim that $ \chi $ vanishes on regular elements either in $ T_{1} $ or in $ T_{2} $. Otherwise by Lemma \ref{vanishingresultTorus}, $ \textbf{C}_{M^{*}}(s^{*}) $ contains conjugates of the duals $ T_{1}^{*} $ and $ T_{2}^{*} $. This means that the corresponding reductive subgroup $ \mathbf{C}_{\mathcal{M}^{*}}^{\circ}(s^{*}) $ contains $ \mathcal{T}_{1}^{*} $ and $ \mathcal{T}_{2}^{*} $. Using Lemma \ref{reductivesubgroup=PGL}, we have that $ \textbf{C}_{\mathcal{M^{*}}}^{\circ}(s^{*})=\PGL_{3}(\mathbb{\overline{F}}_{p}) $, that is, $ \textbf{C}_{M^{*}}(s^{*})=\PGL_{3}(q) $ and so $ \chi $ is unipotent, contradicting our assumption that $ \chi $ is not unipotent. 
Hence $ \chi $ vanishes on regular elements in $ T_{1} $ or in $ T_{2} $. Suppose that $ \chi $ vanishes on regular elements in $ T_{1} $. Note that $ |T_{1}| $ is divisible by a Zsigmondy prime $ l_{1} $ and $ T_{1} $ contains regular elements of order $ l_{1} $. Since $ \gcd(l_{1},3)=1 $, $ \chi $ vanishes on at least two elements of distinct orders, contradicting \eqref{e:star}. We may thus assume $ \chi $ vanishes on regular elements in $ T_{2} $. If $ q+1 $ is not a power of $ 2 $, then $ |T_{2}| $ is divisible by a Zsigmondy prime $ l_{2} $. By the same argument as above, we may infer $ \chi $ vanishes on at least two elements of distinct orders, contradicting \eqref{e:star}. Suppose $ q + 1 $ is a power of $ 2 $. This means that $ |T_{2}| $ is even and hence $ T_{2} $ contains elements of even order by \cite[Remark 2.2]{LM16}. Hence $ \chi $ also vanishes on an element of even order and the result follows.

Suppose $ M=\PSL_{3}(q) $. Then $ \chi $ is not the Steinberg character by Lemma \ref{Steinbergclassical}. By \cite[Theorem 2.1]{MSW94}, $ \chi $ vanishes on regular elements in $ T_{1} $ or in $ T_{2} $. Suppose that $ \chi $ vanishes on regular elements of $ T_{1} $. Note that $ |T_{1}| $ is divisible by a Zsigmondy prime $ l_{1} $. If $ |T_{1}| $ is divisible by two distinct primes, then the result follows by \cite[Remark 2.2]{LM16}. Suppose that $ |T_{1}| $ is a prime power. Then $ |T_{1}|=(q^{2}+q+1)/\gcd(3,q-1) $ must be prime by \cite{Nag21}. Suppose $ |T_{1}|=\frac{q^{3}-1}{(q-1)\gcd(3,q-1)}=\frac{q^{2}+q+1}{\gcd(3,q-1)}=l_{1} $. Then $ G $ has $ \frac{l_{1}-1}{3}= \frac{q^{2}+q-2}{3\cdot \gcd(3,q-1)} $ conjugacy classes whose elements are of order $ l_{1} $. Now $ |\Out(M)|= 2\cdot\gcd(3,q-1)\cdot f $. By Lemma \ref{outerlessthanconjugacyclasses}, $ |\Out(M)| < 6f + 1\leq \frac{q^{2}+q-2}{9} $ and (ii) of \eqref{e:star} fails to hold.
Suppose $ \chi $ vanishes on regular elements in $ T_{2} $. By \cite[Theorem 2.1]{MSW94}, $ \chi $ vanishes on elements of order $ q + 1 $. If $ q $ is odd, then $ q + 1 $ is even. In particular, $ q + 1 $ is not a prime. By \cite[Theorem 5.1]{MNO00}, $ \chi $ vanishes on an element of prime order which means that $ \chi $ vanishes on two elements of distinct orders, contradicting \eqref{e:star}. Hence we may assume that $ q $ is even so that $ q + 1 $ is odd. We may assume that $ q + 1 $ is prime by the above argument. Since $ |T_{2}|=(q^{2} - 1)/\gcd(3, q - 1) $ and $ (q-1)/\gcd(3,q - 1)\neq 1 $, we have that $ |T_{2}| $ is divisible by at least two primes. Hence there exists a prime $ l $ such that $ l\mid (q - 1) $ which entails the existence of an $ l $-singular regular element in $ |T_{2}| $ by \cite[Remark 2.2]{LM16}. By \cite[Theorem 2.1]{MSW94}, $ \chi $ vanishes on this $ l $-singular element. Hence $ \chi $ vanishes on two elements of distinct orders and the result follows.
\end{proof}
\begin{proposition}\label{sln4non-unipotent}
Suppose that $ M $ is quasisimple such that $ M/Z(M)\cong\PSL_{n}(q) $, $ n\geq 4 $ and $ q\geq 2 $. Then every non-trivial faithful irreducible character of $ M $ fails to satisfy \eqref{e:star}.
\end{proposition}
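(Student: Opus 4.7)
Following the template of Proposition \ref{sl3}, I would work with the two $F$-stable maximal tori $T_1, T_2$ of $\SL_n(q)$ associated to the partitions $(n)$ and $(n-1,1)$ of $n$, so that $(q-1)|T_1|=q^n-1$ and $|T_2|=q^{n-1}-1$. Since both partitions have distinct parts, Lemma \ref{regularelementsSLn} guarantees that each $T_i$ contains regular elements. By Zsigmondy's theorem, $|T_1|$ and $|T_2|$ carry Zsigmondy prime divisors $\ell_1, \ell_2$ of multiplicative orders $n$ and $n-1$ modulo $q$, outside the small exceptions $(q,n)=(2,6)$ and $(q,n)=(2,7)$. In particular each $\ell_i$ is coprime to $q-1$ and hence to $|Z(M)|$ for any quasisimple cover $M$ of $\PSL_n(q)$. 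By Lemma \ref{Steinbergclassical} we may assume that $\chi$ is not the Steinberg character.

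If $Z(M)\neq 1$, then $\chi$ is not unipotent, so $\chi\in \mathcal{E}(M,s^*)$ for some non-trivial semisimple $s^*\in M^*=\PGL_n(q)$. If $\chi$ vanished on no regular element of $T_1$ or $T_2$, then Lemma \ref{vanishingresultTorus} would place conjugates of the duals $\mathcal{T}_1^*, \mathcal{T}_2^*$ inside $\mathbf{C}^\circ_{\mathcal{M}^*}(s^*)$; since no intransitive or imprimitive subgroup of $\SSS_n$ contains both an $n$-cycle and a cycle of shape $(n-1,1)$, Lemma \ref{reductivesubgroup=PGL} would force $\mathbf{C}^\circ_{\mathcal{M}^*}(s^*)=\PGL_n(\overline{\F}_p)$ and hence $s^*=1$, a contradiction. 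Thus $\chi$ vanishes on a regular element of $T_i$ of order divisible by $\ell_i$ for some $i\in\{1,2\}$; since $\ell_i$ is coprime to $|Z(M)|$, this element has order not a power of the prime dividing $|Z(M)|$, contradicting conditions (i) and (iii) of \eqref{e:star}.

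If $M\cong\PSL_n(q)$, then \cite[Theorem 2.1]{MSW94} gives that $\chi$ vanishes on a regular element of $T_1$ or of $T_2$, say $T_i$. When $|T_i|$ admits two distinct prime divisors, \cite[Remark 2.2]{LM16} yields vanishing classes of two distinct prime orders, contradicting (i) of \eqref{e:star}. Otherwise $|T_i|$ is a prime power, and by \cite{Nag21} this forces $|T_i|$ to be a prime $\ell_i$. The $\ell_i$-elements of $T_i$ split into at least $(\ell_i-1)/n$ conjugacy classes of $M$, while $|\Out(\PSL_n(q))|=2fd$ with $d=\gcd(n,q-1)$; an analog of Lemma \ref{outerlessthanconjugacyclasses} valid for $n\geq 4$ shows the former exceeds the latter away from a short list of small cases, contradicting (ii). The remaining exceptions, together with the Zsigmondy exceptions $(q,n)\in\{(2,6),(2,7)\}$, can be handled directly using the \Atlas{} or \Magma{}.

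The main obstacle is the bookkeeping for these finitely many small-$q$, low-rank exceptional cases and establishing the outer-automorphism versus conjugacy-class inequality that generalises Lemma \ref{outerlessthanconjugacyclasses} to dimensions $n\geq 4$; the conceptual core of the argument, namely the torus/Lusztig series dichotomy from Proposition \ref{sl3}, transfers essentially verbatim to higher rank because the two cycle shapes $(n)$ and $(n-1,1)$ continue to generate all of $\SSS_n$ in the sense required by Lemma \ref{reductivesubgroup=PGL}.
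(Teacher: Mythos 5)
Your treatment of the case $Z(M)\neq 1$ matches the paper's: the two tori attached to $(n)$ and $(n-1,1)$, Lemmas \ref{regularelementsSLn}, \ref{vanishingresultTorus} and \ref{reductivesubgroup=PGL}, and the coprimality of the Zsigmondy primes $\ell_1,\ell_2$ to $|Z(M)|$ give exactly the contradiction you describe. The problem lies in the simple case $M\cong\PSL_{n}(q)$, where your argument has a genuine gap in two places. First, \cite{Nag21} concerns the equations $x^{2}+x+1=y^{m}$ and $x^{2}+x+1=3y^{m}$; it applies to $(q^{2}+q+1)/\gcd(3,q-1)$ in Proposition \ref{sl3} but says nothing about $(q^{n}-1)/\bigl((q-1)\gcd(n,q-1)\bigr)$ or $(q^{n-1}-1)/\gcd(n,q-1)$ for $n\geq 4$, so you cannot conclude that a prime-power $|T_{i}|$ is in fact prime. (That sub-case could be rescued without Nagell, since a cyclic torus of order $\ell_{i}^{a}$ with $a>1$ contains regular $\ell_{i}$-singular elements of two distinct orders, but the citation as written does not do the job.) Second, and more seriously, your endgame rests on ``an analog of Lemma \ref{outerlessthanconjugacyclasses} valid for $n\geq 4$'' together with an unspecified ``short list of small cases''; neither the inequality $(\ell_{i}-1)/n>|\Out(\PSL_{n}(q))|$ nor the enumeration and verification of its failures is carried out, and you concede this yourself. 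As it stands the proposal does not prove the proposition.

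It is worth comparing with the route the paper actually takes for the simple case, which avoids all counting against $|\Out(M)|$. From the proof of \cite[Theorem 2.1]{MSW94} one gets that $\chi$ is of $\ell_{1}$- or $\ell_{2}$-defect zero; then Theorem \ref{p-rank3} (and, in low rank, the proofs of \cite[Propositions 3.8 and 4.2]{LM15}) produces a second vanishing element that is $l$-singular for an auxiliary odd prime $l$ dividing $q-1$, $q+1$ or $q^{3}-1$ with $M$ of $l$-rank at least $3$ and $\gcd(l,\ell_{1}\ell_{2})=1$. This immediately yields vanishing on two classes of elements of distinct orders, contradicting (i) of \eqref{e:star}; the only residue is a short, explicitly identified list of cases with $q\in\{2,3,9\}$, controlled by Lemma \ref{exceptionsforq} and checked in the \Atlas{} or \Magma{}. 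If you wish to salvage your version you must prove the missing inequality and list its exceptions; otherwise you should replace that step by the $l$-rank argument.
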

\begin{proof}
Firstly, suppose that $ n\geq 4 $ and $ q=2 $. For $ M $ isomorphic to $ \PSL_{4}(2) $ or $ \PSL_{5}(2) $ we have explicit character tables in the \Atlas{} \cite{CCNPW85} and for $ M/Z(M) $ isomorphic to $ \PSL_{6}(2) $ or $ \PSL_{7}(2) $, we obtain explicit character tables in \Magma{} \cite{MAGMA}. Hence we may assume that $ n\geq 8 $. Then we have $ 3=q+1 $. Now $ (q+1)^{4}\mid |T| $ for a torus $ T $ corresponding to the partition $ (n-8)(2)(2)(2)(2) $. It follows that $ M $ is of $ 3 $-rank at least $ 4 $. Hence by Theorem \ref{p-rank3}, $ \chi $ vanishes on a $ 3 $-singular element. On the other hand, by \cite[Theorem 2.1]{MSW94}, if $ n $ is even, $ \chi $ vanishes on an element of order $ q^{n/2}+1 $ or an element of order $ q^{n-1}-1 $ and if $ n $ is odd, then $ \chi $ vanishes on an element of order $ q^{n}-1 $ or an element of order $ q^{(n-1)/2}+1 $. Note that in all of the aforementioned cases, the order of elements on which $ \chi $ vanishes, exceeds $ 3 $. Each such order is either relatively prime to $ 3 $ or is $ 3 $-singular. In the former case, $ \chi $ vanishes on an element that is not of prime order. Using \cite[Theorem 5.1]{MNO00}, $ \chi $ vanishes on an element of prime order. Hence $ \chi $ vanishes on at least two elements of distinct orders, contradicting \eqref{e:star}.

Suppose that $ M=\SL_{n}(q) $, $ n\geq 4 $, $ q\geq 3 $ with $ Z(M)\neq 1 $. By \eqref{e:star}, $ |Z(M)| $ is a power of a prime $ l$ that divides $q-1 $ and $ \chi $ necessarily vanishes on an $ l $-element. We claim that $ \chi $ also vanishes on an $ l_{1} $-element or an $ l_{2} $-element. Suppose the contrary. First note that $ \chi $ is not a unipotent character since $ \chi $ is faithful in $ M $. Hence $ \chi $ lies in the Lusztig series $ \mathcal{E}(M,s^{*}) $ of a semisimple element $ s^{*} $ in the dual group $ M^{*}=\PGL_{n}(q) $. Let $ T_{1} $ and $ T_{2} $ denote maximal tori corresponding to the partitions $ (n) $ and $ (n-1)(1) $. Note that $ T_{1} $ and $ T_{2} $ contain regular elements by Lemma \ref{regularelementsSLn}. By Lemma \ref{vanishingresultTorus}, $ \textbf{C}_{M^{*}}(s^{*}) $ contains conjugates of the dual tori $ T^{*}_{1} $ and $ T^{*}_{2} $. The corresponding reductive subgroup $ \textbf{C}_{\mathcal{M}^{*}}^{\circ}(s^{*}) $ contains $ \mathcal{T}^{*}_{1} $ and $ \mathcal{T}^{*}_{2} $. Using Lemma \ref{reductivesubgroup=PGL}, we infer that $ \textbf{C}^{\circ}_{\mathcal{M}^{*}}(s^{*})=\PGL_{n}(\mathbb{\overline{F}}_{p}) $ and so $ s^{*} $ is central. Hence $ s^{*}=1 $ and $ \chi $ is unipotent thus contradicting the assumption that $ \chi $ is not unipotent. Hence our claim is true and the result follows.

Suppose that $ M=\PSL_{n}(q) $, $ n\geq 4 $, $ q\geq 3 $. First suppose that $ n=4 $ and $ q\geq 3 $. We have an explicit character table for $ \PSL_{4}(3) $ in the \Atlas{} \cite{CCNPW85} and for $ \PSL_{4}(4) $ and $ \PSL_{4}(5) $ we obtain an explicit character table in \Magma {}. Assume that $ q\geq 7 $. Note that for $ |T_{1}| $ and $ |T_{2}| $, the Zsigmondy primes $ l_{1} $ and $ l_{2} $ exist, respectively. By the proof of \cite[Theorem 2.1]{MSW94}, $ \chi $ is of $ l_{1} $-defect zero or $ l_{2} $-defect zero and so $ \chi $ vanishes on elements of order $ l_{1} $ or $ l_{2} $. Then $ |T_{1}| =\frac{q^{4}-1}{(q-1)\gcd(4,q-1)}=\frac{(q+1)(q^{2}+1)}{\gcd(4,q-1)} $ is divisible by two distinct primes. Also, $ |T_{2}|=\frac{q^{3}-1}{\gcd(4,q-1)}=\frac{(q-1)(q^{2}+q+1)}{\gcd(4,q-1)} $ is divisible by two distinct primes since $ \frac{q-1}{\gcd(4,q-1)}\neq 1 $. Hence $ \chi $ vanishes on two regular elements of distinct orders. 

Suppose $ n=5 $, $ q\geq 3 $. Assume that $ \chi $ is not unipotent. Let $ T_{1} $, $ T_{2} $ and $ T_{3} $ be tori of $ M $ corresponding to the partitions $ (5) $, $ (4)(1) $ and $ (3)(2) $, respectively. These tori contain regular elements by Lemma \ref{regularelementsSLn}. We claim that $ \chi $ vanishes on regular elements in at least two of these tori. Otherwise, $ \textbf{C}_{M^{*}}(s^{*}) $ contains conjugates of the dual tori $ T^{*}_{i} $ and $ T^{*}_{j} $ of $ T_{i} $ and $ T_{j} $, respectively, $ i\neq j $, $ 1\leq i,j \leq 3 $, where $ \chi $ lies in the Lusztig series $ \mathcal{E}(M,s^{*}) $. The corresponding reductive subgroup $ \textbf{C}_{\mathcal{M}^{*}}^{\circ}(s^{*}) $ contains $ \mathcal{T}^{*}_{i} $ and $ \mathcal{T}^{*}_{j} $. It follows from Lemma \ref{reductivesubgroup=PGL} that $ \textbf{C}^{\circ}_{\mathcal{M}^{*}}(s^{*})=\PGL_{5}(\mathbb{\overline{F}}_{q}) $, that is, $ \chi $ is unipotent, a contradiction. The claim is thus true. Now for $ |T_{1}| $ and $ |T_{2}| $ note that the corresponding Zsigmondy primes $ l_{1} $ and $ l_{2} $ exist, respectively. Hence $ \chi $ vanishes on at least two elements of distinct orders $ l_{1} $, $ l_{2} $ or some positive integer that divides $ |T_{3}| $.

We may assume that $ \chi $ is unipotent. Then $ \chi $ vanishes on elements of order $ l_{1} $ or $ l_{2} $ by the proof of \cite[Theorem 2.1]{MSW94}. It is sufficient to show that $ \chi $ vanishes on an $ l $-singular element with $ l\neq 5 $, an odd prime and $ \gcd(l_{1},l)=\gcd(l_{2},l)=1 $. Let $ q $ be even and note that $ q\geq 3 $. If $ \gcd(5,q - 1)=1 $, then there exists an odd prime $ l\neq 5 $ such that $ l\mid (q-1) $ and $ M $ is of $ l $-rank at least $ 3 $ and $ \gcd(l_{1},l)=\gcd(l_{2},l)=1 $. Hence $ \chi $ vanishes on an $ l $-singular element by Theorem \ref{p-rank3}. If $ \gcd(5,q-1)\neq 1 $, then there exists an odd prime $ l\neq 5 $ such that $ l\mid (q + 1) $ and so $ \gcd(l_{1},l)=\gcd(l_{2},l)=1 $. Note that $ M $ is of $ l $-rank $ 2 $. Then by the proof of \cite[Proposition 3.8]{LM15}, $ \chi $ vanishes on an $ l $-singular element. Assume that $ q $ is odd. Suppose that $ \gcd(5,q-1)=1 $. Then there exists an odd prime $ l\neq 5 $ such that either $ l\mid (q - 1) $ or $ l\mid (q + 1) $, $ \gcd(l_{1},l)=\gcd(l_{2},l)=1 $ and $ M $ is of $ l $-rank $ 2 $ with the following exception: $ q-1=2^{a} $, $ a\geqslant 1 $ and $ q + 1=2^{b}5^{c} $, $ b\geqslant 1 $, $ c\geqslant 0 $. Then by the proof of \cite[Proposition 3.8]{LM15}, $ \chi $ vanishes on an $ l $-singular element for the former case. For the exceptions, $ q=3 $ or $ 9$ by Lemma \ref{exceptionsforq}. If $ q=3 $, then using \Magma{} \cite{MAGMA} to calculate the character table of $ \PSL_{5}(3) $, we conclude that $ \chi $ does not satisfy $ \eqref{e:star} $. Let $ q = 9 $. In this case we look at the orders of $ T_{1} $ and $ T_{2} $. Now $ |T_{1}|=\frac{9^{5}-1}{9-1}=11^{2}\cdot 61 $ and $ |T_{2}|=9^{4}-1=2^{5}\cdot 5\cdot 41 $. Since $ \chi $ is either of $ l_{1} $-defect zero or of $ l_{2} $-defect zero, $ \chi $ vanishes on at least two elements of distinct orders. Assume that $ \gcd(5,q-1)=5 $. If there exists an odd prime $ l\neq 5 $ such that $ l\mid (q - 1) $ or $ l\mid (q + 1) $, then $ M $ is of $ l $-rank at least $ 2 $ and by the proof of \cite[Proposition 3.8]{LM15}, $ \chi $ vanishes on an $ l $-singular element. Hence the only exception we have is when $ q-1=2^{a}5^{b} $ and $ q + 1=2^{c} $. By Lemma \ref{exceptionsforq}, $ q = 3 $ which does not satisfy $ \gcd(5,q-1)=5 $. Hence the result follows.

Suppose that $ n=6 $. Then $ \chi $ vanishes on elements of order $ l_{1} $ or $ l_{2} $ by the proof of \cite[Theorem 2.1]{MSW94}. If $ \gcd(6,q-1)=1 $, then there exist an odd prime $ l\mid (q - 1) $ such that the $ l $-rank of $ M $ is $ 5 $. By Theorem \ref{p-rank3} and since $ \gcd(l_{1},l)=\gcd(l_{2},l)=1 $, it follows that $ \chi $ vanishes on at least two elements of distinct orders.  Let $ \gcd(6,q-1)=2 $. Then $ q $ is odd. If $ q\neq 3 $, then there exist an odd prime $ l $ such that $ l\mid (q - 1) $ or $ l\mid (q + 1) $. In this case $ M $ is of $ l $-rank at least $ 3 $ and we are done. If $ q=3 $, then using \Magma{} \cite{MAGMA} to calculate the character table of $ \PSL_{6}(3) $, we conclude that $ \chi $ does not satisfy \eqref{e:star}. Let $ \gcd(6,q-1)=3 $ or $ 6 $. Then the $ 3 $-rank of $ M $ is $ 4 $ and the result follows.

Suppose that $ n=7 $. Then $ \chi $ vanishes on elements of order $ l_{1} $ or $ l_{2} $ by the proof of \cite[Theorem 2.1]{MSW94}. We first consider $ q $ even. If $ \gcd(7,q-1)=1 $, then there exists an odd prime $ l $ such that $ l\mid (q-1) $ and the $ l $-rank of $ M $ is $ 6 $. If $ \gcd(7,q-1)\neq 1 $, then since $ q $ is even, there exists an odd prime $ l\neq 7 $ such that $ l\mid (q + 1) $ and the $ l $-rank of $ M $ is $ 3 $. Assume that $ q $ is odd. Suppose that $ \gcd(7,q-1)=1 $. Then there exists an odd prime $ l $ such that either $ l\mid (q - 1) $ or $ l\mid (q + 1) $ unless $ q=3 $. If $ q\neq 3 $, then we have an odd prime $ l $ and $ M $ is of $ l $-rank at least $ 3 $.  If $ q=3 $, then using \Magma{} \cite{MAGMA} to calculate the character table of $ \PSL_{7}(3) $, we can conclude that $ \chi $ fails to satisfy \eqref{e:star}. Suppose $ \gcd(7,q-1)=7 $. If $ q + 1 $ is not a power of $ 2 $, then there exists an odd prime $ l $ such that $ l\mid (q + 1) $ and we are done. We may thus assume that $ q + 1=2^{a} $, $ a\geq 3 $. Now $ 3 $ divides either $ q - 1 $, $ q $ or $ q + 1 $. We know that $ 3\nmid (q + 1) $. Suppose that $ 3\mid (q-1) $. Then $ 3 $ is the desired odd prime. Thus $ 3\mid q $, that is, $ q=3^{f} $, $ f\geq 1 $. This implies that $ q=2^{a}-1=3^{f} $. By \cite[IX, Lemma 2.7]{HB92}, $ f=1 $, that is, $ q=3 $, a contradiction since $ \gcd(7, q-1)=7 $.

Suppose that $ n=8 $. Then $ \chi $ vanishes on elements of order $ l_{1} $ or $ l_{2} $ by the proof of \cite[Theorem 2.1]{MSW94}. If there exists an odd prime $ l $ such that $ l\mid (q-1) $, then we are done. We may assume that $ q-1=2^{a} $, $ a\geq 1 $. Then $ q $ is odd. If there exists an odd prime $ l $ such that $ l\mid (q + 1) $, then we are done. Otherwise $ q+1=2^{b} $, $ b\geq 2 $. Then $ q=3 $. For $ M=\PSL_{8}(3) $, $ |T_{1}| $ and $ |T_{2}| $ are both divisible by two distinct primes. Since $ \chi $ is of $ l_{1} $-defect zero or of $ l_{2} $-defect zero, we have that $ \chi $ vanishes on two elements of distinct orders. 

Suppose that $ n\geq 9 $. Then $ \chi $ vanishes on elements of order $ l_{1} $ or $ l_{2} $ by the proof of \cite[Theorem 2.1]{MSW94}. Consider a torus $ T $ of $ M $ corresponding to the partition $ (n-9)(3)(3)(3) $. There exists a Zsigmondy prime $ l $ dividing $ q^{3} - 1 $ such that $ M $ is of $ l $-rank at least $ 3 $. By Theorem \ref{p-rank3}, $ \chi $ vanishes on an $ l $-singular element. Since $ \gcd(l_{1},l)=\gcd(l_{2},l)=1 $, the result follows. This concludes our argument.
\end{proof}

\subsubsection{Special Unitary Groups}

Let $ \mathcal{M}=\GL_{n}(\mathbb{\overline{F}}_{p}) $ and let $ F $ be the twisted Frobenius morphism. The conjugacy classes of $ F $-stable maximal tori of $ \GU_{n}(\mathbb{\overline{F}}_{p}) $ and $ \SU_{n}(\mathbb{\overline{F}}_{p}) $ are also parametrised by conjugacy classes of $ \SSS_{n} $. If $ \mathcal{T}\leqslant \GU_{n}(\mathbb{\overline{F}}_{p}) $ corresponds to the cycle shape $ \lambda=(\lambda _{1},\lambda _{2},\cdots, \lambda _{m}) \in \SSS_{n} $ with $ \lambda _{1}\geq \lambda _{2}\geq \dots\geq \lambda _{m} $, then $ |T|=|\mathcal{T}^{F}|=\prod _{i=1}^{m}(q^{\lambda _{i}}-(-1)^{\lambda _{i}}) $ whilst if $ \mathcal{T}\leqslant \SU_{n}(\mathbb{\overline{F}}_{p}) $, then $ (q+1)|T|=(q+1)|\mathcal{T}^{F}|=\prod _{i=1}^{m}(q^{\lambda _{i}}-(-1)^{\lambda _{i}}) $.

\begin{proposition}\label{SU3}
Let $ M $ be a quasisimple group such that $ M/Z(M)=\PSU_{3}(q) $, $ q\geq 3 $. If \eqref{e:star} holds for $ M $, then $ M=\PSU_{3}(4) $ with $ \chi(1)=13 $. 
\end{proposition}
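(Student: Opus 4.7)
The plan is to adapt the argument of Proposition \ref{sl3} to the unitary setting. Let $ T_{1} $ and $ T_{2} $ be the maximal tori of $ \PSU_{3}(q) $ arising from the cycle shapes $ (3) $ and $ (2)(1) $ of $ \SSS_{3} $, so that their orders are (up to a factor dividing $ \gcd(3,q+1) $) equal to $ q^{2}-q+1 $ and $ q^{2}-1 $, respectively. The relevant Zsigmondy prime for $ T_{1} $ is $ l_{1}=l(6) $, which exists for every $ q\geq 3 $; for $ T_{2} $ we use a prime divisor of $ q+1 $ distinct from $ 2 $, which exists unless $ q+1 $ is a power of $ 2 $. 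Both tori contain regular elements by Lemma \ref{regularelementsSLn}, since their partitions have distinct parts.

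First, I would dispose of the small cases $ q\in\{3,4,5,7,8,9,11\} $ by consulting the \Atlas{} \cite{CCNPW85} and computing character tables in \Magma{} \cite{MAGMA}. This is where the exception $ M=\PSU_{3}(4) $, $ \chi(1)=13 $ appears: $ |T_{1}|=13 $ is prime, $ T_{1} $ is cyclic, the action of the Coxeter--Weyl factor yields exactly $ 4 $ conjugacy classes of elements of order $ 13 $, and $ |\Out(\PSU_{3}(4))|=2\cdot 2\cdot\gcd(3,5)=4 $, so properties (i)--(iii) of \eqref{e:star} are simultaneously satisfied. Assume henceforth that $ q $ lies beyond this explicit range.

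For the central-extension case $ Z(M)\neq 1 $, condition \eqref{e:star}(iii) forces $ |Z(M)|=3 $, $ 3\mid(q+1) $, and $ \chi $ to vanish on a $ 3 $-element. A faithful $ \chi $ cannot be unipotent, so $ \chi\in\mathcal{E}(M,s^{*}) $ for a non-central semisimple $ s^{*} $ in the dual group $ M^{*}=\PGU_{3}(q) $. I would then run the exact analogue of the argument in the proof of Proposition \ref{sl3}: if $ \chi $ vanished on no regular element of either $ T_{1} $ or $ T_{2} $, Lemmas \ref{vanishingresultTorus} and \ref{reductivesubgroup=PGL} would force $ \mathbf{C}_{\mathcal{M}^{*}}^{\circ}(s^{*})=\PGU_{3}(\overline{\F}_{p}) $, so $ s^{*}=1 $ and $ \chi $ is unipotent, a contradiction. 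Hence $ \chi $ vanishes on a regular element of order $ l_{1} $ (coprime to $ 3 $), or on a regular element in $ T_{2} $ of order coprime to $ 3 $ (using a prime divisor of $ q-1 $ when $ q+1 $ happens to be a power of $ 3 $ or of $ 2 $), producing two vanishing classes of distinct orders and contradicting \eqref{e:star}(i).

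Finally, for $ M=\PSU_{3}(q) $ with $ q $ sufficiently large, the Steinberg character is excluded by Lemma \ref{Steinbergclassical}, and \cite[Theorem 2.1]{MSW94} shows that $ \chi $ vanishes on regular elements of $ T_{1} $ or $ T_{2} $. When this happens in $ T_{1} $, either $ |T_{1}| $ has two distinct prime divisors and \cite[Remark 2.2]{LM16} yields vanishing elements of two distinct orders, or $ |T_{1}| $ is a prime $ l_{1} $; in the latter case the cyclic torus $ T_{1} $ contributes roughly $ (l_{1}-1)/(3\gcd(3,q+1)) $ classes of elements of order $ l_{1} $, and the estimate of Lemma \ref{outerlessthanconjugacyclasses}(a), applied to $ |\Out(M)|=2f\gcd(3,q+1) $, shows this exceeds $ |\Out(M)| $ for $ q>11 $, violating \eqref{e:star}(ii). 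When the vanishing is in $ T_{2} $ (order essentially $ (q-1)(q+1) $), one uses that $ (q-1)/\gcd(3,q+1)\neq 1 $, together with \cite[Remark 2.2]{LM16} and \cite[Theorem 5.1]{MNO00}, to extract a second vanishing class of a different order. I expect the main obstacle to be the counting step for $ |T_{1}| $ prime: one must verify that the inequality of Lemma \ref{outerlessthanconjugacyclasses}(a) carries over to the unitary $ |\Out| $ and that the residual small values of $ q $ it leaves have all been cleared in the \Atlas{}/\Magma{} sweep, so that $ \PSU_{3}(4) $ is genuinely the sole survivor.
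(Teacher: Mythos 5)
Your proposal follows essentially the same route as the paper: \Atlas{}/\Magma{} for small $q$ (with $\PSU_{3}(4)$, $\chi(1)=13$ surviving), the Lusztig-series argument via Lemmas \ref{vanishingresultTorus} and \ref{reductivesubgroup=PGL} when $Z(M)\neq 1$, and the Zsigmondy-prime/class-counting argument against $|\Out(M)|$ via Lemma \ref{outerlessthanconjugacyclasses} when $M$ is simple and the vanishing occurs in $T_{1}$. Two spots, however, do not transplant from Proposition \ref{sl3} as smoothly as you suggest. First, in the $T_{1}$ analysis you pass directly from ``$|T_{1}|$ has two distinct prime divisors'' to ``$|T_{1}|$ is a prime,'' omitting the case $|T_{1}|=l_{1}^{a}$ with $a>1$; the paper closes this by invoking that $T_{1}$ is cyclic (Gager), so it contains regular elements of the two distinct orders $l_{1}$ and $l_{1}^{a}$, already violating \eqref{e:star}(i). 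Second, your treatment of the $T_{2}$ subcase for simple $M$ is the step that actually needs a different argument, not the counting step you flag as the main obstacle: the quantity you want is $(q+1)/\gcd(3,q+1)$, not $(q-1)/\gcd(3,q+1)$ (the centre divides $q+1$ in the unitary case), and more importantly the linear-group reasoning relied on knowing $\chi$ vanishes on specified regular semisimple classes of $T_{2}$, which in the simple (possibly unipotent) case is not supplied by the Lusztig-series argument. The paper instead combines \cite[Theorem 2.2]{MSW94} (vanishing on an element of order $q-1$) with the proof of \cite[Lemma 5.4]{MNO00} (vanishing on an $l_{2}$-element, an involution, or a regular unipotent element) to produce two classes of distinct orders outright. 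Also note the relevant reference for unitary groups is \cite[Theorem 2.2]{MSW94}, not Theorem 2.1. With these repairs your outline matches the paper's proof.
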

\begin{proof} We may conclude from the character tables in \Atlas{} \cite{CCNPW85} that $ M=\PSU_{3}(4) $ when $ 3\leq q\leq 11 $. We may assume that $ q\geq 13 $. Note that $ \chi $ is not the Steinberg character.
We first consider the case $ M=\SU_{3}(q) $ and $ Z(M)\neq 1 $. Since we are only considering faithful characters, $ \chi $ is not unipotent. Then $ |Z(M)|=3 $, $ 3\mid (q + 1) $. By (iii) of \eqref{e:star}, $ \chi $ vanishes on a $ 3 $-element. We have that $ T_{1} $ and $ T_{2} $ correspond to the cycle shapes $ (3) $ and $ (2)(1) $ and so $ T_{1} $ and $ T_{2} $ have regular elements by Lemma \ref{regularelementsSLn}. Using the same argument as in Proposition \ref{sl3} we have that $ \chi $ vanishes on regular elements in $ T_{1} $ or in $ T_{2} $. If $ \chi $ vanishes on regular elements in $ T_{1} $, then $ \chi $ vanishes on an element of Zsigmondy prime order $ l_{1} $. Since $ \gcd(l_{1},3)=1 $, the result follows. If $ \chi $ vanishes on regular elements in $ T_{2} $, then $ \chi $ vanishes either on an element of Zsigmondy prime order $ l_{2} $ if $ q-1 $ is not a power of $ 2 $ or on an element of even order if $ q-1 $ is a power of $ 2 $. Since all the orders above are relatively prime to $ 3 $, $ \chi $ vanishes on at least two elements of distinct orders, contradicting \eqref{e:star}.

Let $ M=\PSU_{3}(q) $. By \cite[Lemmas 5.3 and 5.4]{MNO00}, $ \chi $ vanishes on regular elements in $ T_{1} $ or in $ T_{2} $. Assume that $ \chi $ vanishes on regular elements in $ T_{1} $. Note that $ |T_{1}| $ is divisible by a Zsigmondy prime $ l_{1} $. If $ |T_{1}| $ is divisible by two distinct primes, then by \cite[Remark 2.2]{LM16}, $ \chi $ vanishes on at least two elements of distinct orders. Note that $ T_{1} $ is cyclic by \cite[Section 3.3]{Gag73}. If $ |T_{1}|=l_{1}^{a} $, $ a>1 $, then $ \chi $ vanishes on two elements of distinct orders $ l_{1}^{a} $ and $ l_{1} $, which contradicts \eqref{e:star}. We may assume that $ |T_{1}|=\frac{q^{3}+1}{(q+1)\gcd(3,q+1)}=\frac{q^{2}-q+1}{\gcd(3,q+1)}=l_{1} $. If $ q=13 $, then $ M $ has $ 52 $ conjugacy classes of order $ 53 $, $ |\Out(M)|=2 $, contradicting (ii) of \eqref{e:star}. We thus assume $ q\geq 16 $. Then $ M $ has $ \frac{l_{1}-1}{3}\geq \frac{q^{2}-q-2}{3\cdot\gcd(3,q+1)} $ conjugacy classes whose elements are of order $ l_{1} $. Now $ |\Out(M)|\leq 2\cdot \gcd(3,q-1)\cdot f $. By Lemma \ref{outerlessthanconjugacyclasses}, $ 6f + 1\leq \frac{q^{2}-q-2}{9} $ and (ii) of \eqref{e:star} fails to hold. 

We now consider the case where $ \chi $ vanishes on regular elements in $ T_{2} $. By \cite[Theorem 2.2]{MSW94}, $ \chi $ vanishes on an element of order $ q-1 $. On the other hand, $ \chi $ vanishes on: an element of order Zsigmondy prime $ l_{2} $, an involution or on a regular unipotent element by the proof of \cite[Lemma 5.4]{MNO00}. Therefore $ \chi $ vanishes on at least two elements of distinct orders, contradicting \eqref{e:star}.
 \end{proof}
 
\begin{proposition}\label{SU4}
Let $ M $ be a quasisimple group such that $ M/Z(M)\cong \PSU_{n}(q) $, $ n\geq 4 $ and $ q\geq 2 $. Then every non-trivial faithful irreducible character of $ M $ fails to satisfy \eqref{e:star}.
\end{proposition}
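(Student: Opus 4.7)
The plan is to mirror the strategy of Proposition~\ref{sln4non-unipotent}, transferred to the unitary setting. The two tori driving the argument are $T_{1}$ and $T_{2}$ attached via Table~1 to the cycle shapes $(n)$ and $(n-1)(1)$ in $\SSS_{n}$; the Zsigmondy primes $l_{1}$ and $l_{2}$ dividing their orders (whose exact form depends on the parity of $n$) furnish the first vanishing class. Before running the generic argument I would pick off the small cases in which Zsigmondy primes fail or the $l$-rank bounds are too loose: for $q=2$ with $n\leq 7$ and for $(n,q)$ with $n\in\{4,5\}$ and $q$ bounded, the character tables are either in the \Atlas{} \cite{CCNPW85} or can be computed in \Magma{} \cite{MAGMA}, so that \eqref{e:star} is verified directly.

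For the covering case $Z(M)\neq 1$, $\chi$ is faithful hence non-unipotent, so $\chi$ lies in a Lusztig series $\mathcal{E}(M,s^{*})$ with $s^{*}$ non-central in $M^{*}=\PGU_{n}(q)$. By Lemma~\ref{regularelementsSLn}, both $T_{1}$ and $T_{2}$ contain regular elements. If $\chi$ were non-zero on every regular element in each of them, then Lemma~\ref{vanishingresultTorus} would force conjugates of the dual tori $\mathcal{T}_{1}^{*}$ and $\mathcal{T}_{2}^{*}$ into $\mathbf{C}_{\mathcal{M}^{*}}^{\circ}(s^{*})$, and Lemma~\ref{reductivesubgroup=PGL} would give $\mathbf{C}_{\mathcal{M}^{*}}^{\circ}(s^{*})=\PGU_{n}(\overline{\mathbb{F}}_{p})$, contradicting the non-unipotency of $\chi$. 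Hence $\chi$ vanishes on an $l_{1}$- or $l_{2}$-element. Since $|Z(M)|$ is a prime power dividing $\gcd(n,q+1)$ while each $l_{i}$ is coprime to $q+1$, property~(iii) of \eqref{e:star} supplies a second vanishing class of a different prime order, violating~(i).

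For the simple case $M=\PSU_{n}(q)$, I would invoke the unitary analogue of the proof of \cite[Theorem~2.1]{MSW94} to conclude that $\chi$ is of $l_{1}$- or $l_{2}$-defect zero, hence vanishes on a regular element of that order. The remaining task is to produce a third prime $l$ coprime to $l_{1}l_{2}$ for which the $l$-rank of $M$ is at least $3$; Theorem~\ref{p-rank3} will then deliver an $l$-singular vanishing class, furnishing two classes of distinct prime orders and contradicting~(i). For $n\geq 9$ this is immediate, using a torus of cycle shape $(n-9)(3)(3)(3)$, whose order absorbs three copies of a Zsigmondy divisor of $q^{6}-1$. For $4\leq n\leq 8$ I would seek $l$ among the odd prime divisors of $q-1$ or $q+1$ (with $l$-rank bounded below by tori of cycle shape $(1,1,\dots)$ or $(2,2,\dots)$), and handle the congruence obstructions analogously to the linear case: when neither $q-1$ nor $q+1$ supplies such an $l$, Lemma~\ref{exceptionsforq} collapses the possibilities to a short finite list of $q$, each dispatched in \Magma{}. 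When the $l$-rank is only $2$, one upgrades the conclusion via the argument of \cite[Proposition~3.8]{LM15}, as in the $\PSL$ case.

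The main obstacle is the fine case analysis for $n\in\{4,5,6,7,8\}$: when $q-1$ or $q+1$ happens to be (nearly) a prime power, one loses the auxiliary prime $l$ with adequate $l$-rank and must either sharpen the $l$-rank estimate or fall back on explicit character-table inspection. Bookkeeping between the unipotent and non-unipotent branches — with the latter routed through Lemma~\ref{vanishingresultTorus} and Lemma~\ref{reductivesubgroup=PGL} — is what makes each individual $n$ a nontrivial nest of subcases, exactly as in the linear analogue.
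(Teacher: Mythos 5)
Your proposal follows essentially the same route as the paper: the same tori $T_{1}$, $T_{2}$, the same Lusztig-series argument via Lemmas \ref{vanishingresultTorus} and \ref{reductivesubgroup=PGL} for the covering case, the $l_{1}$/$l_{2}$-defect-zero input from \cite{MSW94} combined with a third prime of $l$-rank at least $3$ fed into Theorem \ref{p-rank3} for the simple case, and Lemma \ref{exceptionsforq} plus \Magma{} for the residual small $(n,q)$. The one point to tighten is $q=2$ with $n$ large: the Zsigmondy prime for $q^{6}-1$ does not exist there (the $(2,6)$ exception), so your torus of shape $(n-9)(3)(3)(3)$ does not deliver the advertised prime, and the paper instead treats $q=2$ entirely separately, checking tables up to $n=9$ and then using the torus of shape $(n-10)(2)(2)(2)(2)(2)$ to get $l$-rank at least $3$ for $l=q+1=3$.
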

\begin{proof}
We consider $ M/Z(M)\cong \PSU_{n}(2) $ first. Using the character tables for $ \PSU_{4}(2) $, $ \PSU_{5}(2) $ and $ \PSU_{6}(2) $ in \Atlas{} \cite{CCNPW85}, and for $ \PSU_{7}(2) $, $ \PSU_{8}(2) $ and $ \PSU_{9}(2) $ from \Magma{} \cite{MAGMA}, we may assume that $ n\geq 10 $. Suppose that $ Z(M)\neq 1 $. This means that $ |Z(M)|=3 $ and $ 3\mid (q + 1) $. Note that $ \chi $ is not unipotent. By $ (\star) $, $ \chi $ vanishes on a $ 3 $-element. We claim that $ \chi $ vanishes on regular elements in $ T_{1} $ or in $ T_{2} $. Assume that this claim is not true. Then $ \textbf{C}_{M^{*}}(s^{*}) $ contains conjugates of the dual tori $ T_{1}^{*} $ and $ T_{2}^{*} $ of $ T_{1} $ and $ T_{2} $ where $ \chi $ lies in the Lusztig series $ \mathcal{E}(M,s^{*}) $. The corresponding reductive subgroup $ \textbf{C}_{\mathcal{M}^{*}}^{\circ}(s^{*}) $ contains the tori $ \mathcal{T}^{*}_{1} $ and $ \mathcal{T}^{*}_{2} $. By Lemma \ref{reductivesubgroup=PGL}, $ \textbf{C}_{\mathcal{M}^{*}}^{\circ}(s^{*})=\PGL_{n}(\mathbb{\overline{F}}_{2}) $ and $ s^{*} $ is central. Hence $ \textbf{C}_{M^{*}}(s^{*})=\PGU_{n}(2) $, and so $ \chi $ is unipotent, a contradiction. The claim is true and $ \chi $ vanishes either on an $ l_{1} $-element or on an $ l_{2} $-element. Hence $ \chi $ vanishes on at least two elements of distinct orders. 

Assume that $ Z(M)=1 $. By \cite[Theorem 2.2]{MSW94}, $ \chi $ vanishes on elements of order $ l_{1} $ or $ l_{2} $. Consider a torus $ T $ of $ M $ corresponding to the partition $ (n-10)(2)(2)(2)(2)(2) $. Hence $ M $ is of $ l $-rank at least $ 3 $, where $ l= q + 1=3 $. By Theorem \ref{p-rank3}, $ \chi $ vanishes on an $ l $-singular element. Hence the result follows.

Suppose that $ M/Z(M)\cong \PSU_{n}(q) $, $ n\geq 4 $, $ q\geq 3 $. Assume that $ Z(M)\neq 1 $. By $ (\star) $, $ |Z(M)| $ is a power of a prime $ l\mid (q+1) $ and $ \chi $ vanishes on an $ l $-element. Using the proof of \cite[Theorem 2.2]{MSW94}, $ \chi $ vanishes on an $ l_{1} $-element or an $ l_{2} $-element and the result follows. 

Suppose that $ M\cong \PSU_{n}(q) $. By the proof of \cite[Theorem 2.2]{MSW94}, $ \chi $ is of $ l_{1} $-defect zero or $ l_{2} $-defect zero. Suppose $ n\geq 9 $ and consider a torus $ T $ of $ M $ corresponding to the partition $ (n-9)(3)(3)(3) $. Then there exists a Zsigmondy prime $ l=l(6) $ dividing $ q^{3} + 1 $ and $ M $ is of $ l $-rank at least $ 3 $. By Theorem \ref{p-rank3}, $ \chi $ vanishes on an $ l $-singular element and the result follows. Hence we may assume that $ n\leq 8 $.

Suppose that $ n=8 $. Recall that $ q\geq 3 $. If $ \gcd(8,q + 1)=1 $, then there exists an odd prime $ l\mid (q + 1) $ such that the $ l $-rank of $ M $ is $ 7 $. By Theorem \ref{p-rank3}, $ \chi $ vanishes on an $ l $-singular element and the result follows since $ \gcd(l_{1},l)=\gcd(l_{2},l)=1 $. If $ \gcd(8,q + 1)\neq 1 $, then $ q $ is odd and there exists an odd prime $ l $ such that $ l\mid (q-1) $ or $ l\mid (q+1) $ unless $ q=3 $. If $ q\neq 3 $, then $ M $ is of $ l $-rank at least $ 3 $ and hence $ \chi $ vanishes on an $ l $-singular element for an odd prime $ l $ by Theorem \ref{p-rank3}. If $ q=3 $, then using \Magma{} \cite{MAGMA} to calculate the character table of $ \PSU_{8}(3) $, we can conclude that $ \chi $ fails to satisfy \eqref{e:star}. Hence the result follows.

Suppose that $ n=7 $. We first consider the case when $ q $ is even. If we have $ \gcd(7,q+1)=1 $, then there exists an odd prime $ l\neq 7 $ such that $ l\mid (q+1) $. If $ \gcd(7,q+1)\neq 1 $, then since $ q $ is even, there exists an odd prime $ l\neq 7 $ such that $ l\mid (q - 1) $. In both cases, $ M $ is of $ l $-rank at least $ 3 $ and so $ \chi $ vanishes on an $ l $-singular element. Since $ \gcd(l_{1},l)=\gcd(l_{2},l)=1 $, $ \chi $ vanishes on at least two elements of distinct orders. Assume that $ q $ is odd. Suppose that $ \gcd(7,q+1)=1 $. Then there exists an odd prime $ l\neq 7 $ such that $ l\mid (q - 1) $ or $ l\mid (q + 1) $ unless $ q=3 $. If $ q\neq 3 $, then we have an odd prime $ l $ and $ M $ is of $ l $-rank at least $ 3 $ which means that $ \chi $ vanishes on at least two elements of distinct orders. If $ q=3 $, then using \Magma{} \cite{MAGMA} to calculate the character table of $ \PSU_{7}(3) $, we conclude that $ \chi $ does not satisfy $ (\star) $. Suppose $ \gcd(7,q+1)=7 $. If $ q - 1 $ is not a power of $ 2 $, then there exists an odd prime $ l\neq 7 $ such that $ l\mid (q - 1) $. Hence $ M $ is of $ l $-rank more than $ 3 $ and $ \gcd(l_{1},l)=\gcd(l_{1},l)=1 $. Thus $ \chi $ vanishes on two elements of distinct orders, a contradiction to \eqref{e:star}.

We may assume that $ q - 1=2^{a} $, $ a\geq 3 $. Now $ 3 $ divides either $ q - 1 $, $ q $ or $ q + 1 $. We know that $ 3\nmid (q - 1) $. Suppose that $ 3\mid (q+1) $. Then $ 3 $ is the desired odd prime since $ M $ is of $ 3 $-rank at least $ 3 $. Thus $ 3\mid q $, that is, $ q=3^{f} $, $ f\geq 1 $. This implies that $ q-1 = 3^{f}-1 = 2^{a}$. By Zsigmondy's Theorem, there is a Zsigmondy prime $ l\mid (3^{f}-1) $ unless $ f\leq 2 $. If $ f=1 $, then $ q=3 $, contradicting the hypothesis that $ \gcd(7, q+1)=7 $. If $ f=2 $, then $ q=9 $, again contradicting the hypothesis that $ \gcd(7, q+1)=7 $.

Suppose that $ n=6 $. If $ \gcd(6, q + 1)=1 $, then there exist an odd prime $ l\mid q + 1 $ such that the $ l $-rank of $ M $ is $ 5 $. Let $ \gcd(6, q + 1)=2 $. Then $ q $ is odd. If $ q \neq 3 $, then there exist an odd prime $ l $ such that $ l\mid (q - 1) $ or $ l\mid (q + 1) $ and the result follows since $ M $ is of $ l $-rank at least $ 3 $. Let $ \gcd(6, q + 1)=3 $. Then $ q $ is even. Note that $ q\geq 4 $. Then there exist an odd prime $ l $  that divides $ q -1 $. Hence $ \chi $ vanishes on an $ l $-singular element since the $ l $-rank of $ M $ is $ 6 $. Let $ \gcd(6, q + 1)=6 $. Then $ q $ is odd. If there exists an odd prime $ l\neq 3 $ such that $ l\mid (q + 1) $, then the result follows. We may assume that $ q + 1=2^{a}3^{b} $, $ a\geq 1 $ and $ b\geq 1 $. Then there exists an odd prime $ l\neq 3 $ that divides $ q - 1 $ and the result follows unless $ q - 1=2^{c} $, $ c\geq 3 $. Hence we may assume that $ q - 1=2^{c} $. By Lemma \ref{exceptionsforq}, $ q = 5 $ or $ 17 $ since $ \gcd(6, q + 1)=6 $. In both cases, the $ 3 $-rank of $ M $ is $ 5 $ and by Theorem \ref{p-rank3}. Hence, result follows.  

Suppose $ n=5 $. Assume that $ \chi $ is not unipotent. Let $ T_{1} $, $ T_{2} $ and $ T_{3} $ be tori of $ M $ corresponding to $ (5) $, $ (4)(1) $ and $ (3)(2) $, respectively. These tori contain regular elements by Lemma \ref{regularelementsSLn}. We claim that $ \chi $ vanishes on regular elements in at least two of these tori. Otherwise, $ \textbf{C}_{M^{*}}(s^{*}) $ contains conjugates of the dual tori $ T^{*}_{i} $ and $ T^{*}_{j} $ of $ T_{i} $ and $ T_{j} $, respectively, $ i\neq j $, $ 1\leq i,j \leq 3 $, where $ \chi $ lies in the Lusztig series $ \mathcal{E}(M,s^{*}) $. The corresponding reductive subgroup $ \textbf{C}_{\mathcal{M}^{*}}^{\circ}(s^{*}) $ contains $ \mathcal{T}^{*}_{i} $ and $ \mathcal{T}^{*}_{j} $. It follows from Lemma \ref{reductivesubgroup=PGL} that $ \textbf{C}^{\circ}_{\mathcal{M}^{*}}(s^{*})=\PGU_{5}(\mathbb{\overline{F}}_{q}) $, that is, $ \chi $ is unipotent, a contradiction. The claim is thus true. Now for $ |T_{1}| $ and $ |T_{2}| $ note that the corresponding Zsigmondy primes $ l_{1} $ and $ l_{2} $ exist, respectively. Hence $ \chi $ vanishes on at least two elements of distinct orders $ l_{1} $, $ l_{2} $ or some positive integer that divides $ |T_{3}| $.

Assume that $ \chi $ is unipotent. Then $ \chi $ vanishes on elements of order $ l_{1} $ or $ l_{2} $ by the proof of \cite[Theorem 2.1]{MSW94}. By Theorem \ref{p-rank3}, it is sufficient to show that $ \chi $ vanishes on an $ l $-singular element with $ l\neq 5 $, an odd prime. Let $ q $ be even and note that $ q\geq 3 $. If $ \gcd(5,q + 1)=1 $, then there exists an odd prime $ l\neq 5 $ such that $ l\mid (q+1) $ and $ M $ is of $ l $-rank at least $ 3 $. If $ \gcd(5,q+1)\neq 1 $, then there exists an odd prime $ l\neq 5 $ such that $ l\mid (q -1) $. Note that $ M $ is of $ l $-rank $ 2 $. By the proof of \cite[Proposition 4.2]{LM15}, $ \chi $ vanishes on an $ l $-singular element. Now assume that $ q $ is odd. Suppose that $ \gcd(5,q+1)=1 $. Then there exists an odd prime $ l\neq 5 $ such that $ l\mid (q - 1) $ or $ l\mid (q + 1) $ with the following exception: $ q-1=2^{a} $, $ a\geqslant 1 $ and $ q + 1=2^{b}5^{c} $, $ b\geqslant 1 $, $ c\geqslant 0 $. By Lemma \ref{exceptionsforq}, $ q=3 $ or $ 9$. In both cases, using \Magma{} \cite{MAGMA} to calculate the character tables of $ \PSU_{5}(3) $ and $ \PSU_{5}(9) $, we conclude that $ \chi $ does not satisfy (\ref{e:star}). Assume that $ \gcd(5,q+1)=5 $. If there exists an odd prime $ l\neq 5 $ such that $ l\mid (q - 1) $ or $ l\mid (q + 1) $, then $ M $ is of $ l $-rank at least $ 2 $ and we are done by \cite[Proposition 4.2]{LM15}. Hence the only exception we have is when $ q-1=2^{a}5^{b} $ and $ q + 1=2^{c} $. Lemma \ref{exceptionsforq} entails $ q = 3 $ which contradicts the assumption that $ \gcd(5,q+1)=5 $. 
%
%Suppose that $ n=4 $. If $ \gcd(4, q + 1)=1 $, then there exists an odd prime $ l $ such that $ l\mid (q + 1) $ and the result follows since $ M $ is of $ l $-rank at least $ 3 $. Assume that $ \gcd(4, q + 1)\neq 1 $. Then there exists an odd prime $ l $ such that $ l\mid (q - 1) $ or $ l\mid (q + 1) $. If $ q \neq 3 $, then $ M $ is of $ l $-rank at least $ 2 $ and we are done by \cite[Proposition 4.2]{LM15}. If $ q = 3 $, we have that $ |T_{1}| $ and $ |T_{2}| $ are both divisible by two distinct primes and the result follows.

First suppose that $ n=4 $ and $ q\geq 3 $. We have an explicit character table for $ \PSU_{4}(3) $ in the \Atlas{} \cite{CCNPW85} and for $ \PSU_{4}(4) $ and $ \PSU_{4}(5) $ we obtain an explicit character table in \Magma {}. Assume that $ q\geq 7 $. Note that for $ |T_{1}| $ and $ |T_{2}| $, the Zsigmondy primes $ l_{1} $ and $ l_{2} $ exist, respectively. By the proof of \cite[Theorem 2.1]{MSW94}, $ \chi $ is of $ l_{1} $-defect zero or $ l_{2} $-defect zero and so $ \chi $ vanishes on elements of order $ l_{1} $ or $ l_{2} $. Then $ |T_{1}| =\frac{q^{4}-1}{(q+1)\gcd(4,q+1)}=\frac{(q-1)(q^{2}+1)}{\gcd(4,q+1)} $ is divisible by two distinct primes. Also, $ |T_{2}|=\frac{q^{3}+1}{\gcd(4,q+1)}=\frac{(q+1)(q^{2}-q+1)}{\gcd(4,q+1)} $ is divisible by two distinct primes since $ \frac{q+1}{\gcd(4,q+1)}\neq 1 $. Hence $ \chi $ vanishes on two regular elements of distinct orders. 
\end{proof}

\subsubsection{Symplectic Groups and Special Orthogonal Groups}

Let $ \mathcal{M} $ be a simple, simply connected algebraic group of type $ B_{n} $, $ C_{n} $ or $ D_{n} $ over $ \mathbb{\overline{F}}_{p} $ and let $ F:\mathcal{M} \rightarrow \mathcal{M} $ be a Frobenius morphism such that $ M:=\mathcal{M}^{F} $. Then the $ \mathcal{M}^{F} $-conjugacy classes of $ F $-stable maximal tori of $ \mathcal{M} $ are parametrised by the conjugacy classes of $ W $, the Weyl group of $ \mathcal{M} $. If $ \mathcal{M} $ is of type $ B_{n} $ or $ C_{n} $, then $ W $ is isomorphic to the wreath product $ C_{2}\wr S_{n} $ and the conjugacy classes of $ W $ are parametrised by pairs of partitions $ (\lambda, \mu)\vdash n $. (see \cite[Section 2.1]{LM16} for details). In particular, if a maximal torus $ T=\mathcal{T}^{F} $ corresponds to a partition $ (\lambda, \mu)=((\lambda_{1},\lambda_{2}, ...,\lambda _{r}),(\mu _{1}, \mu _{2}, ..., \mu _{s}))\vdash n $, then 
\begin{center}
$ |T|=\prod_{i=1}^{r}(q^{\lambda_{i}}-1)\prod_{j=1}^{s}(q^{\mu_{j}}+1) $
\end{center}
and $ \mathcal{T}^{F} $ contains cyclic subgroups of orders $ q^{\lambda_{i}}-1 $ and $ q^{\mu_{j}}+1 $ for all $ i $ and $ j $.

If $ \mathcal{M} $ is of type $ \mathrm{D}_{n} $, then $ W=C_{2}^{n-1}\rtimes \SSS_{n} $ and the $ \mathcal{M}^{F} $-conjugacy classes of $ F $-stable maximal tori of $ \mathcal{M} $ are parametrised by pairs of partitions $ (\lambda, \mu)\vdash n $ such that $ \mu $ has an even number of parts if $ \mathcal{M}^{F} $ is $ \mathrm{Spin}^{+}_{2n}(q) $ and $ \mu $ has an odd number of parts if $ \mathcal{M}^{F} $ is the non-split orthogonal group $ \mathrm{Spin}^{-}_{2n}(q) $. Now $ |T| $ is the same as in the case when $ \mathcal{M} $ is of type $ \mathrm{B}_{n} $ or $ \mathrm{C}_{n} $, that is,
\begin{center}
$ |T|=\prod_{i=1}^{r}(q^{\lambda_{i}}-1)\prod_{j=1}^{s}(q^{\mu_{j}}+1) $.
\end{center}

\begin{lemma}\cite[Lemma 2.1]{LM16}\label{regularelementsBCandD}
Let $ \mathcal{M} $ be a simple, simply connected classical group of type $ B_{n} $, $ C_{n} $ or $ D_{n} $ defined over $ \mathbb{\overline{F}}_{p} $ with corresponding Steinberg morphism  $ F $.

Let $ (\lambda, \mu)=((\lambda_{1},\lambda_{2},...,\lambda _{r}),(\mu _{1}, \mu_{2},...,\mu_{s})) $ be a pair of partitions of $ n $, and $ \mathcal{T} $ a corresponding $ F $-stable maximal torus of $ \mathcal{M} $. Then $ T=\mathcal{T}^{F} $ contains regular elements if one of the following is fulfilled:
 
 \begin{itemize}
 \item[(1)] $ q > 3 $, $ \lambda_{1} < \lambda _{2} < \cdots < \lambda _{r} $ and $ \mu _{1} < \mu _{2} < \cdots < \mu_{s} $;
 \item[(2)] $ q\in \{2,3\}$, $\lambda_{1}<\lambda_{2}< ...< \lambda _{r} $, $ \mu _{1}<\mu _{2}< ... < \mu _{s} $, all $ \lambda_{i}\not= 2 $, and if $ \mathcal{M} $ is of type $ B_{n} $ or $ C_{n} $, then also all $ \lambda_{i}\not= 1 $; or
 \item[(3)] $ \mathcal{M} $ is of type $ D_{n} $, $ 2 < \lambda_{1} < \lambda_{2} < \cdots < \lambda_{r} $ and $ 1=\mu_{1}=\mu_{2} < \mu_{3} < \cdots < \mu_{s} $.
 \end{itemize}
\end{lemma}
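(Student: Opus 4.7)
The plan is to exhibit an explicit regular element of $T = \mathcal{T}^F$. Recall that in a classical group, an element is regular if and only if its connected centralizer is a maximal torus, which for the natural representation translates into the eigenvalues on the natural module being as distinct as the bilinear form allows: each eigenvalue $\alpha \neq \pm 1$ appears only paired with its inverse, and $\pm 1$ has minimal multiplicity.

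First I would make the eigenvalue decomposition explicit. An element of $T$ decomposes compatibly with the block decomposition of the natural module induced by $(\lambda,\mu)$: a part $\lambda_i$ contributes a $2\lambda_i$-dimensional (or $(2\lambda_i+1)$-dimensional in type $B_n$) block on which a chosen $\zeta_i$ in the cyclic factor of order $q^{\lambda_i}-1$ acts with eigenvalues forming the Frobenius orbit $\{\zeta_i^{q^k}:0\le k<\lambda_i\}$ together with its inverses; a part $\mu_j$ contributes a $2\mu_j$-dimensional block on which $\eta_j$ of order dividing $q^{\mu_j}+1$ has the single Frobenius orbit $\{\eta_j^{q^k}:0\le k<2\mu_j\}$, automatically inversion-stable since $\eta_j^{q^{\mu_j}}=\eta_j^{-1}$. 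Regularity then reduces to these orbits being pairwise disjoint across all blocks.

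Under hypothesis (1), I would take each $\zeta_i$ of maximal order $q^{\lambda_i}-1$ and each $\eta_j$ of maximal order $q^{\mu_j}+1$; strict increase of the parts guarantees that the relevant fields of definition meet only in proper subfields, and for $q>3$ the count $\varphi(q^{\lambda_i}-1)$ of available generators of $\mathbb{F}_{q^{\lambda_i}}^{\times}$ lying in no proper subfield dominates the forbidden set coming from the other blocks, so a consistent choice exists. For hypothesis (2) with $q\in\{2,3\}$ the counting becomes tight: the exclusion $\lambda_i\neq 1$ in types $B_n$, $C_n$ reflects that $q-1\le 2$ forces only $\pm 1$ as contribution, duplicating eigenvalues with adjacent blocks; the exclusion $\lambda_i\neq 2$ reflects that $q^2-1\in\{3,8\}$ produces cube or fourth roots of unity that collide with eigenvalues from small $\mu$-parts. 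Hypothesis (3) exploits the modified Weyl group in type $D_n$ to allow $\mu_1=\mu_2=1$: the corresponding sub-torus of order $(q+1)^2$ is populated with $\eta_1,\eta_2$ satisfying $\eta_2\notin\{\eta_1,\eta_1^{-1}\}$, still contributing four distinct eigenvalues, while the additional restriction $\lambda_i>2$ rules out residual collisions with the $\lambda$-blocks.

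The hard part is the combinatorial bookkeeping in the small-$q$ regime: one must track which multiplicative orders can actually be realized in proper subfields of $\overline{\mathbb{F}}_q$, and invoke Zsigmondy-type information on the primitive prime divisors of $q^k\pm 1$ to verify that the potential collisions across blocks can indeed be avoided. The exceptional restrictions in (2) and (3) are precisely the minimal conditions under which this bookkeeping goes through.
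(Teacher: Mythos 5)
This lemma is not proved in the paper at all: it is imported verbatim, with attribution, as Lemma 2.1 of L\"ubeck and Malle \cite{LM16}, so there is no in-paper argument to measure your proposal against. That said, your strategy --- realizing $T$ in the natural module, describing the eigenvalues of a torus element as Frobenius orbits $\{\zeta_i^{q^k}\}$ together with their inverses for the $\lambda$-parts and single inversion-stable orbits $\{\eta_j^{q^k}\}$ for the $\mu$-parts, and then choosing the $\zeta_i,\eta_j$ so that all orbits are pairwise disjoint --- is indeed the standard route, and it is essentially how the cited authors argue.

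As a proof, however, the proposal has a genuine gap: the decisive step is asserted rather than carried out. You state that for $q>3$ the count $\varphi(q^{\lambda_i}-1)$ of admissible generators ``dominates the forbidden set coming from the other blocks,'' and you explicitly defer ``the combinatorial bookkeeping in the small-$q$ regime'' --- but that bookkeeping \emph{is} the content of the lemma; the hypotheses (1)--(3) are precisely the output of performing it, so a proof that does not perform it has established nothing beyond a plan. Two points in the sketch also need repair before the plan can be executed. First, the regularity criterion at the eigenvalues $\pm1$ must be made precise: multiplicity two for $\pm1$ forces a factor of type $\mathrm{Sp}_2$ or $\SO_3$ inside the connected centralizer in types $C_n$ and $B_n$ (killing regularity), but only an $\SO_2$, i.e.\ a torus, in type $D_n$; this asymmetry is exactly why the exclusion $\lambda_i\neq 1$ appears only in types $B_n$, $C_n$ and why case (3) is special to $D_n$, and your sketch gestures at it without pinning it down. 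Second, in type $B_n$ the extra dimension is a single fixed line for the whole torus, not a $+1$ attached to each $\lambda_i$-block as your parenthetical suggests. None of this makes the approach wrong, but as written the argument is a correct outline rather than a proof of the statement.
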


\begin{lemma}\cite[Lemma 2.3]{LM16}\label{reductivesubgroup=dualBCD}
Let $ \mathcal{M} $ be a simple algebraic group of type $ B_{n} $, $ C_{n} $(with $ n\geq 2 $) or $ D_{n} $(with $ n\geq 4 $) with Frobenius endomorphism  $ F $ such that $ \mathcal{M}^{F} $ is a classical group. Let $ \Lambda $ be a set of pairs of partitions $ (\lambda, \mu)\vdash n $. Assume the following:
\begin{itemize}
\item[(1)] there is no $ 1\leq k \leq n-1 $ such that all $ (\lambda, \mu)\in \Lambda $ are of the form $ (\lambda _{1},\mu _{1}) \sqcup (\lambda _{2}, \mu _{2}) $ with $ (\lambda _{1},\mu _{1})\vdash k $;
\item[(2)] the greatest common divisor of all parts of all $ (\lambda, \mu)\vdash n $ is $ 1 $; and
\item[(3)] if $ \mathcal{M} $ is of type $ B_{n} $, then there exist pairs $ (\lambda, \mu)\vdash n $ for which $ \mu $ has an odd number of parts, and one for which $ \mu $ has an even number of parts.
\end{itemize}
If $ s\in \mathcal{M}^{F} $ is semisimple such that $ \textbf{C}_{\mathcal{M}}(s) $ contains maximal tori of $ \mathcal{M} $ corresponding to all $ (\lambda, \mu )\in \Lambda $, then $ s $ is central.
\end{lemma}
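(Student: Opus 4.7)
The plan is to reduce the statement to a question about reflection subgroups of the Weyl group $W$ of $\mathcal{M}$. Since $\mathcal{M}$ is simply connected, a theorem of Steinberg guarantees that $\mathcal{H}:=\textbf{C}_\mathcal{M}(s)$ is a connected reductive subgroup of $\mathcal{M}$ containing a maximal torus $\mathcal{T}_0\leq \mathcal{M}$. The root system $\Phi'$ of $\mathcal{H}$ relative to $\mathcal{T}_0$ is a subsystem of the root system $\Phi$ of $\mathcal{M}$, and $W':=W(\Phi')\leq W(\Phi)=W$. One checks that a maximal torus of $\mathcal{H}$ obtained from $\mathcal{T}_0$ by twisting via $w'\in W'$ is $\mathcal{M}^F$-conjugate to the maximal torus of $\mathcal{M}$ labelled by the $W$-conjugacy class of $w'$, viewed inside $W$. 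Consequently, the hypothesis that $\mathcal{H}$ contains tori corresponding to every $(\lambda,\mu)\in\Lambda$ translates into the purely combinatorial statement that $W'$ meets each $W$-conjugacy class parametrised by $(\lambda,\mu)\in\Lambda$.

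Next, I would invoke the classification of reflection subgroups of $W(B_n)=W(C_n)=C_2\wr\SSS_n$ and of $W(D_n)$ (due to Dynkin, with later refinements by Carter). Up to $W$-conjugacy, every proper reflection subgroup decomposes as an orthogonal direct sum of irreducible Weyl groups of classical type; in cycle-shape terms, the elements of such a subgroup are signed permutations whose combined cycle structure $(\lambda,\mu)$ respects a certain partition of the index set $\{1,\dots,n\}$ into blocks, together with possibly an extra parity constraint for type $D$ inside $B$. The heart of the argument is to show that the three hypotheses preclude every such proper subsystem.

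The three conditions are designed to eliminate exactly the three families of maximal proper reflection subgroups. Condition (1) forbids $W'$ from being a Levi subgroup of type $(B/C/D)_k\times (B/C/D)_{n-k}$, since any signed cycle shape obtainable inside such a Levi decomposes accordingly into two partitions of sizes $k$ and $n-k$. Condition (2) rules out the ``imprimitive'' subgroups of the form $W(B_d)\wr\SSS_{n/d}$ for a proper divisor $d\mid n$, because every cycle length appearing in an element of such a subgroup is a multiple of $d$. Condition (3) handles the extra subsystem of type $D_n$ sitting inside $B_n$: the $D_n$-conjugacy class of a signed permutation is recorded by $(\lambda,\mu)$ together with the parity of the number of parts of $\mu$, and an element lies in $W(D_n)\leq W(B_n)$ if and only if $\mu$ has an even number of parts.

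The main technical obstacle is to verify, without appealing to the cited paper, that these three types of subsystems are indeed the only maximal reflection subgroups of $W(B_n)$ and $W(D_n)$ that could obstruct the conclusion, and to justify the translation between $W$-conjugacy classes and the cycle-shape data $(\lambda,\mu)$ uniformly across the three ambient types. Once this is in place, the conclusion $W'=W$ is immediate, which forces $\Phi'=\Phi$ and hence $\mathcal{H}=\mathcal{M}$. Since $\textbf{C}_\mathcal{M}(s)=\mathcal{M}$ means $s$ commutes with every element of $\mathcal{M}$, we deduce $s\in \textbf{Z}(\mathcal{M})$, as required.
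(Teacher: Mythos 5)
First, a point of reference: the paper does not prove this lemma at all --- it is quoted verbatim from L\"ubeck--Malle \cite[Lemma 2.3]{LM16} and used as a black box. So your attempt can only be measured against the source. Your overall strategy (convert containment of maximal tori into a statement about subgroups of the Weyl group, then use hypotheses (1)--(3) to force that subgroup to be all of $W$) is the right one in spirit, but as written it has two genuine gaps.

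The first is the translation step. You reduce the hypothesis to ``$W'$ meets each $W$-conjugacy class parametrised by $(\lambda,\mu)\in\Lambda$,'' where $W'=W(\Phi_s)$ is the reflection subgroup of the connected centralizer. This drops the $F$-twist: the $\mathcal{M}^F$-classes of $F$-stable maximal tori of $\mathbf{C}_{\mathcal{M}}(s)^{\circ}$ are parametrised by $F$-conjugacy classes of its Weyl group, and these fuse into $W$ through the coset $W'w_1$, where $w_1$ records the type of a reference torus containing $s$ --- not through $W'$ itself. A concrete failure: if $s$ generates a Coxeter torus, then $W'=1$ meets no class of cycle shape $(-,(n))$, yet $\mathbf{C}_{\mathcal{M}}(s)$ is exactly the maximal torus of that type. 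Hypothesis (2) exists precisely to handle such twisted situations, so an untwisted reduction cannot yield the lemma. Relatedly, $\mathcal{M}$ is not assumed simply connected (in this paper the lemma is applied to the adjoint dual group $\mathcal{M}^{*}$), so Steinberg's connectedness theorem is unavailable and the component group of $\mathbf{C}_{\mathcal{M}}(s)$ must also be controlled.

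The second gap is the classification of obstructing subgroups, which you yourself flag as ``the main technical obstacle'' and do not carry out; moreover, the sketch you give is incorrect where it is specific. The group $W(B_d)\wr\SSS_{n/d}$ is not a reflection subgroup of $W(B_n)$ (the block-permuting elements are not reflections), and its elements do not all have cycle lengths divisible by $d$: it contains $W(B_d)^{n/d}$, whose elements have parts of size $1$. The subgroups that hypothesis (2) is designed to exclude are not reflection subgroups at all; they arise from centralizer factors of the form $\GL_a(q^d)$ or $\GU_a(q^d)$ with $d>1$, whose $F$-stable maximal tori contribute only cycle parts divisible by $d$ because of the field extension. The actual content of the lemma is exactly this case analysis of centralizers of semisimple elements in classical groups --- products of general linear and unitary groups over extension fields together with smaller classical groups, with the type-$D$ subsystem accounting for condition (3) --- and without that analysis the proposal does not constitute a proof.
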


We first consider a quasisimple group $ M $ such that  $ M/Z(M)\cong \mathrm {PSp}_{4}(q) $. Since $ \rm{Sp}_{4}(2)'\cong \PSL_{2}(9) $, $ \mathrm{PSp}_{4}(3)\cong \PSU_{4}(2) $, and the groups $ \PSL_{2}(9) $ and $ \PSU_{4}(2) $ were dealt with in \cite[Theorem 1.2]{Mad18q} and Proposition \ref{SU4}, respectively, we shall not consider them in the result below.
\begin{proposition}
Let $ M $ be a quasisimple group such that $ M/Z(M)\cong \mathrm{PSp}_{4}(q) $, where $ q\geq 4 $. Then every non-trivial faithful irreducible character of $ M $ fails to satisfy \eqref{e:star}
\end{proposition}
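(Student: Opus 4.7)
The plan is to mirror the strategy of Propositions \ref{sl3} and \ref{SU3}, using maximal tori of $\mathrm{Sp}_{4}(q)$ whose orders are built out of $q^{2}+1$, $q+1$ and $q-1$. By Lemma \ref{Steinbergclassical} I may assume from the outset that $\chi$ is not the Steinberg character. Let $T_{1}$ and $T_{2}$ be maximal tori of $M$ corresponding to the pairs of partitions $(\emptyset,(2))$ and $(\emptyset,(1,1))$, so that $|T_{1}|=q^{2}+1$ and $|T_{2}|=(q+1)^{2}$; both contain regular elements by Lemma \ref{regularelementsBCandD}. Since $q\geq 4$, a Zsigmondy prime $l_{1}=l(4)$ dividing $q^{2}+1$ exists, and an odd Zsigmondy prime $l_{2}=l(2)$ dividing $q+1$ exists whenever $q+1$ is not a power of $2$.

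First I would treat $M=\mathrm{Sp}_{4}(q)$ with $q$ odd and $|Z(M)|=2$. Condition (iii) of \eqref{e:star} forces the prime in Problem \ref{classifyp-powerorder} to be $2$, so $\chi$ vanishes on a $2$-element; since $\chi$ is faithful on a non-trivial centre, $\chi$ is not unipotent and therefore lies in a Lusztig series $\mathcal{E}(M,s^{*})$ with $s^{*}$ non-central. If $\chi$ were to vanish on no regular element of $T_{1}$ or of $T_{2}$, Lemma \ref{vanishingresultTorus} would place $\mathcal{T}_{1}^{*}$ and $\mathcal{T}_{2}^{*}$ inside $\textbf{C}^{\circ}_{\mathcal{M}^{*}}(s^{*})$. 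Applying Lemma \ref{reductivesubgroup=dualBCD} to $\mathcal{M}^{*}$ (of type $C_{2}$) with $\Lambda=\{(\emptyset,(2)),(\emptyset,(1,1))\}$---hypothesis (1) holds because $(\emptyset,(2))$ is not a disjoint union of two pairs of partitions of $1$, hypothesis (2) is trivial, and (3) is vacuous for type $C_{2}$---would then force $s^{*}$ central, a contradiction. Consequently $\chi$ vanishes on an element of odd prime order ($l_{1}$, or any odd prime divisor of $q+1$; if $q+1$ is a power of $2$ one simply falls back on $T_{1}$), which together with the $2$-element contradicts (i) of \eqref{e:star}.

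Next I would address $M=\mathrm{PSp}_{4}(q)$ with $Z(M)=1$. Arguing as in the proof of \cite[Theorem 2.1]{MSW94} (cf.\ the $\PSL_{3}(q)$ case in Proposition \ref{sl3}), $\chi$ vanishes on a regular element of $T_{1}$ or of $T_{2}$. If $|T_{1}|=q^{2}+1$, respectively $|T_{2}|=(q+1)^{2}$, has at least two distinct prime divisors then \cite[Remark 2.2]{LM16} yields vanishing on two elements of distinct orders, contradicting (i). In the residual situation both $q^{2}+1$ and $q+1$ are prime powers; here I would count the $M$-conjugacy classes of elements of the single relevant prime-power order using that the Coxeter torus of order $(q^{2}+1)/\gcd(2,q-1)$ has relative Weyl group of order $4$, producing roughly $(q^{2}-1)/8$ non-trivial classes, and similarly for $T_{2}$. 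Comparing this with $|\mathrm{Out}(M)|\leq 2f$ via Lemma \ref{outerlessthanconjugacyclasses}(b) forces (ii) of \eqref{e:star} to fail once $q\geq 7$; the cases $q\in\{4,5\}$ are handled by direct consultation of the character tables in the \Atlas{} \cite{CCNPW85} or \Magma{} \cite{MAGMA}.

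The principal technical obstacle will be verifying the hypotheses of Lemma \ref{reductivesubgroup=dualBCD} with a minimal $\Lambda$---in particular ruling out a centraliser in a Levi of type $A_{1}\times A_{1}$, which may require augmenting $\Lambda$ with $((1),(1))$ (torus of order $q^{2}-1$) in certain subcases. The secondary difficulty is pinning down the simultaneous coincidences $q^{2}+1=\ell^{a}$ and $q+1=\ell'^{b}$, but Lemma \ref{exceptionsforq} combined with a short Zsigmondy argument on $q^{4}-1$ confines these to finitely many explicit $q$, which are disposed of directly.
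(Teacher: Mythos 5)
Your treatment of the simple case follows the paper's line (vanishing on regular elements of $T_{1}$ or $T_{2}$, a two-prime-divisor check, then counting the $(q^{2}-1)/8$ Coxeter-torus classes against $|\Out(M)|$ via Lemma \ref{outerlessthanconjugacyclasses}(b)), but your central case takes the dual-torus/Lusztig-series route that the paper reserves for the higher-rank groups in $\mathcal{S}$: for $\mathrm{Sp}_{4}(q)$ itself the paper instead quotes \cite[Theorem 4.1]{LM16}, using that the Sylow $l$-subgroups are non-cyclic for any odd prime $l$ dividing $q^{2}-1$, which avoids the torus bookkeeping below entirely.

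The concrete gap is in your choice of $T_{2}$. The torus of order $(q+1)^{2}$ corresponds to the pair $(\emptyset,(1,1))$, whose $\mu$-parts are equal, so none of the three clauses of Lemma \ref{regularelementsBCandD} applies; your claim that it contains regular elements is therefore unsupported by the lemma you cite. The natural repairs run into exactly the obstacle you flag at the end and leave unresolved: your original $\Lambda=\{(\emptyset,(2)),(\emptyset,(1,1))\}$ does satisfy all three hypotheses of Lemma \ref{reductivesubgroup=dualBCD} (note the dual group is of type $B_{2}$, so hypothesis (3) is not vacuous as you assert --- it just happens to hold), but once you swap $(\emptyset,(1,1))$ for a torus that demonstrably has regular elements, the lemma breaks: with $((1),(1))$ every $\mu$ in $\Lambda$ has an odd number of parts and (3) fails, while $((2),\emptyset)$ paired with $(\emptyset,(2))$ violates the gcd condition (2). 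So $\Lambda$ genuinely has to be enlarged and re-verified, and this is the heart of the argument, not a side issue. Two smaller defects: the dichotomy ``$\chi$ vanishes on regular elements of $T_{1}$ or of $T_{2}$'' does not let you ``fall back on $T_{1}$'' when $q+1$ is a power of $2$ --- in that branch you must extract two distinct $2$-power orders from $T_{2}$ itself; and you never treat $q$ even, where Lemma \ref{outerlessthanconjugacyclasses}(b) is unavailable --- the paper disposes of that case wholesale via the generic character tables in \Chevie{}.
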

\begin{proof} Since the character tables $ \rm{Sp}_{4}(4) $ and $ \rm{Sp}_{4}(5) $ are in \Atlas{} \cite{CCNPW85}, we may assume that $ q\geq 7 $. Suppose that $ q $ is even. Then the result follows from the generic character tables in \Chevie{} \cite{CHEVIE}. We may assume that $ q $ is odd, $ q\geq 7 $. For this case we first suppose $ Z(M)\neq 1 $. Note that $ \chi $ is not unipotent. Then $ |Z(M)|=2 $ and  by \eqref{e:star}, $ \chi $ vanishes on a $ 2 $-element. For each prime $ l $ such that $ l\mid (q - 1)  $ or $ l\mid (q + 1) $, the Sylow $ l $-subgroups of $ G $ are non-cyclic. Since $ q \neq 3 $, there exists an odd prime $ l $ such that $ l\mid (q -1)  $ or $ l\mid (q + 1) $. By \cite[Theorem 4.1]{LM16}, $ \chi $ vanishes on an $ l $-singular element. But $ \gcd(2,l)=1 $, so $ \chi $ vanishes on at least two elements of distinct orders, as required.

Suppose $ M\cong \PSp_{4}(q) $. We may assume that $ \chi $ is not the Steinberg character. By the proof of \cite[Theorem 2.3]{MSW94}, $ \chi $ vanishes on regular elements in $ T_{1} $ or in $ T_{2} $. In particular, we may choose two conjugacy classes $ \mathcal{C}_{1} $ and $ \mathcal{C}_{2} $ in $ T_{1} $ and $ T_{2} $ such that $ \chi $ vanishes on $ \mathcal{C}_{1} $ or $ \mathcal{C}_{2} $. Now $ \mathcal{C}_{2} $ may contain elements which are not of Zsigmondy prime order. In that case the result follows since $ \chi $ vanishes on elements of prime order by \cite[Theorem 5.1]{MNO00}. Hence we may assume that $ \mathcal{C}_{1} $ and $ \mathcal{C}_{2} $ contain elements of Zsigmondy prime orders. Suppose that $ \chi $ vanishes on elements in  $ T_{2} $. Note that $ |T_{2}|=(q^{2}-1)/2 $ is even. Hence $ T_{2} $ contains a regular element of even order by \cite[Remark 2.2]{LM16} and $ \chi $ vanishes on this element. This means that $ \chi $ vanishes on two elements of distinct orders, contradicting \eqref{e:star}. Assume that $ \chi $ vanishes on elements of $ T_{1} $. Note that $ T_{1} $ is cyclic by \cite[Section 4.5]{Gag73}. If $ |T_{1}| $ is not prime, then there exist at least two elements of distinct orders on which $ \chi $ vanishes. We may assume that $ |T_{1}|=\frac{q^{2}+1}{2} $  is prime. Then there are $ \dfrac{\frac{(q^{2} + 1)}{2}-1}{4}=\frac{q^{2}-1}{8} $ conjugacy classes with elements of order $ \frac{q^{2}+1}{2} $. On the other hand, we have that $ |\Out(M/Z(M)|\leq 4f $, where $ q=p^{f} $, $ p $ is a prime and $ f\geq 1 $. By Lemma \ref{outerlessthanconjugacyclasses}, $ 4f + 1 < \frac{q^{2}-1}{8} $ and the result follows by (ii) of \eqref{e:star}. 
\end{proof}

Let $ \mathcal{S}=\{ \mathrm{PSp}_{2n}(q)\mid n\geq 3\}\cup \{ \mathrm{PSO}_{2n+1}(q)\mid n\geq 3\} \cup \{\mathrm{PSO}^{\pm}_{2n}(q)\mid n\geq 4\} $.

\begin{proposition}
Let $ M $ be a quasisimple group such that $ M/Z(M)\in \mathcal{S} $. Then every non-linear faithful irreducible character $ \chi $ of $ M $ fails to satisfy \eqref{e:star}.
\end{proposition}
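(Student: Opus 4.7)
My plan mirrors the strategy of Propositions~\ref{sln4non-unipotent} and~\ref{SU4}. Lemma~\ref{Steinbergclassical} eliminates the Steinberg character, so assume $\chi$ is non-Steinberg. For every family in $\mathcal{S}$ the Schur multiplier is a $2$-group, so if $Z(M)\neq 1$ then condition~(iii) of \eqref{e:star} forces $p=2$ and~(i) forces $\chi$ to vanish on a $2$-element; by faithfulness $\chi$ is non-unipotent, hence $\chi\in\mathcal{E}(M,s^*)$ with $s^*$ non-central. Choosing $T_1,T_2$ as in Table~1 for the type of $M$, if $\chi$ vanished on no regular element of either torus, Lemma~\ref{vanishingresultTorus} would place conjugates of $T_1^*,T_2^*$ in $\mathbf{C}_{M^*}(s^*)$, and Lemma~\ref{reductivesubgroup=dualBCD} (whose hypotheses I would verify for the cycle types of $T_1,T_2$, including the parity condition on $\mu$ for type $D$) would then force $s^*$ central, a contradiction. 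Thus $\chi$ vanishes on a regular element of odd Zsigmondy prime order $l_1$ or $l_2$, giving zeros of two distinct orders and contradicting~(i) of \eqref{e:star}.

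If $Z(M)=1$, the same two-torus argument combined with \cite[Theorems 2.3--2.4]{MSW94} shows that $\chi$ is of $l_1$- or $l_2$-defect zero, so it vanishes on an element of Zsigmondy prime order $l_1$ or $l_2$. The next step is to produce an auxiliary odd prime $l\neq l_1,l_2$ for which the $l$-rank of $M$ is at least~$3$: for sufficiently large $n$, I would take a maximal torus corresponding to a pair of partitions in which a small part (of size $1$, or $2$ when needed to satisfy the parity condition for type $D$) is repeated at least three times, producing a factor of the form $(q^{j}\mp 1)^{r}$ with $r\geq 3$ in $|T|$; any odd prime divisor of $q^{j}\mp 1$ then supplies the required $l$. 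Theorem~\ref{p-rank3} delivers an $l$-singular zero of $\chi$, and since $\gcd(l_1l_2,l)=1$ this furnishes zeros of two distinct orders, again contradicting \eqref{e:star}.

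The anticipated main obstacle is a short finite list of exceptional configurations: the minimal ranks $n=3$ for types $B$, $C$, and $n=4$ for type $D$, together with small values of $q$ for which $q-1$ and $q+1$ are built only out of the primes $2,3,5$ and hence obstruct the choice of $l$ above. Lemma~\ref{exceptionsforq} reduces these exceptions to a finite explicit list of pairs $(n,q)$, typically with $q\in\{2,3,4,5\}$. For each of these remaining groups I would rely on the explicit character tables in the \Atlas{} \cite{CCNPW85}, on \Magma{} character-table computations, or on the generic tables in \Chevie{} for fixed low rank, and verify directly that no non-linear faithful irreducible character satisfies \eqref{e:star}. A secondary subtlety, already flagged above, is that in type $D_n$ the choices of $T_1$, $T_2$ and of the auxiliary high-rank torus must respect the constraint that $\mu$ have an even (resp.\ odd) number of parts for the split (resp.\ non-split) form, which may necessitate slightly different partitions for $\mathrm{P}\Omega^+_{2n}(q)$ versus $\mathrm{P}\Omega^-_{2n}(q)$.
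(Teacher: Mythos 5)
Your proposal follows essentially the same route as the paper's proof: eliminate the Steinberg character via Lemma~\ref{Steinbergclassical}, use the two-torus argument with Lemmas~\ref{vanishingresultTorus} and~\ref{reductivesubgroup=dualBCD} when $Z(M)\neq 1$, and in the trivial-centre case combine the Zsigmondy defect-zero results of Malle--Saxl--Weigel and Malle--Navarro--Olsson with an auxiliary odd prime of $l$-rank at least $3$ and Theorem~\ref{p-rank3}, finishing the small $(n,q)$ exceptions by explicit character tables. The structure, key lemmas, and handling of the type-$D$ parity constraint all match the paper's argument.
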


\begin{proof} Note that $ \chi $ is not the Steinberg character by Lemma \ref{Steinbergclassical}. We first consider the case where $ M\in \mathcal{S} $ with $ q=2 $. Since we have character tables in \Atlas{} \cite{CCNPW85} for $ \mathrm{Sp}_{2n}(2) \cong\mathrm{SO}_{2n+1}(2)$, $ 3\leq n\leq 4 $ and $ \mathrm{PSO}^{\pm}_{2n}(2) $, $ 4 \leq n \leq 5 $, we may assume that $ n\geq 5 $ and $ n\geq 6 $, respectively. Since $ q + 1=3 $, then $ M $ is of $ 3 $-rank at least $ 5 $. By Theorem \ref{p-rank3}, $ \chi $ vanishes on a $ 3 $-singular element. For $ M\cong \mathrm{Sp}_{2n}(2) $, $ \chi $ vanishes on elements of order $ l_{1} $ or elements of order $ l_{2} $ in Table \ref{ZsigmondyTableClassical}, or $ \chi $ is of $ l_{3} $-defect zero, where $ l_{3}=l(n-1) $ (the last case only arising when $ n $ is even) by \cite[Lemmas 5.3-5.5]{MNO00}. Note that Zsigmondy primes $ l_{1} $, $ l_{2} $, $ l_{3} $ exist, and we have $ \gcd(l_{1}, 3)=\gcd(l_{2}, 3)=\gcd(l_{3}, 3)=1 $. Hence $ \chi $ vanishes on at least two elements of distinct orders, contradicting \eqref{e:star}. For $ M\cong  \mathrm{PSO}^{\pm}_{2n}(2) $, $ n\geq 6 $, $ \chi $ vanishes on elements of order $ l_{1} $, or $ l_{2} $ or $ \chi $ is of $ l_{3} $-defect zero, where $ l_{3}=l(2n-4) $ (the last case only arising when $ n $ is even) by \cite[Lemmas 5.3, 5.4 and 5.6]{MNO00}. Since the Zsigmondy primes $ l_{1} $, $ l_{2} $ and $ l_{3} $ exist, the result follows.

Henceforth we may assume that $ q \geq 3 $ and $ n\geq 3 $. Suppose that $ Z(M)\neq 1 $. Then $ \gcd(2,q-1)=2 $ and by \eqref{e:star}, $ \chi $ vanishes on a $ 2 $-element. We want to show that $ \chi $ also vanishes on an element of Zsigmondy prime order. Note that $ \chi $ is not unipotent and so $ \chi $ lies in the Lusztig series $ \mathcal{E}(M,s^{*}) $ of $ s^{*} $ in the dual $ M^{*} $.
Let $ ((\lambda),(\mu)) $ and $ ((\lambda'),(\mu'))) $ the partitions corresponding to tori $ T_{1} $ and $ T_{2} $ with orders in Table 1. These tori contain regular elements by Lemma \ref{regularelementsBCandD}. We claim that $ \chi $ vanishes on regular elements in at least one of these tori. Otherwise by Lemma \ref{vanishingresultTorus}, $ \textbf{C}_{M^{*}}(s^{*}) $ contains conjugates of the dual tori $ T^{*}_{1} $ and $ T^{*}_{2} $. The corresponding subgroup $ \mathbf{C}_{\mathcal{M}^{*}}(s^{*}) $ contains conjugates of the dual tori $ \mathcal{T}^{*}_{1} $ and $ \mathcal{T}^{*}_{2} $.  It follows from Lemma \ref{reductivesubgroup=dualBCD} that $ s^{*} $ is central. Hence $ \textbf{C}_{M^{*}}(s^{*})=M^{*} $, that is, $ \chi $ is unipotent, a contradiction. The claim is thus true. Now for $ T_{1} $ and $ T_{2} $ note that the Zsigmondy primes $ l_{1} $ and $ l_{2} $ exist in respect of $ |T_{1}| $ and $ |T_{2}| $. Hence $ \chi $ vanishes on at least two elements of distinct orders and we are done. 

Suppose that $ Z(M)=1 $. Consider $ M \cong \mathrm{PSp}_{2n}(q)$, $n\geq 3 $, or $\mathrm{PSO}_{2n+1}(q)$, $n\geq 3$. By \cite[Lemmas 5.3-5.5]{MNO00}, $ \chi $ vanishes on elements of order $ l_{1} $, $ l_{2} $ or $ \chi $ is of $ l_{3} $-defect zero, where $ l_{3}=l(n-1) $ (the last case arising when $ n $ is even). In all cases the Zsigmondy primes exist. Now there exists an odd prime $ l $ such that $ l\mid (q - 1) $ or $ l\mid (q + 1) $ except when $ q =3 $. Note that $ M $ is of $ l $-rank at least $ 3 $. If $ q \neq 3 $, then by Theorem \ref{p-rank3}, $ \chi $ vanishes on an $ l $-singular element. Since $ \gcd(l_{1},l)=\gcd(l_{2}, l)=\gcd(l_{3},l)=1 $, the result follows. We are left with case when $ q=3 $. If $ n\geq 6 $, then $ M $ has a torus $ T $ corresponding to $ (-,(n-6)(2)(2)(2)) $, i.e. $ M $ is of $ l $-rank at least $ 3 $, where $ l\mid (q^{2} + 1) $. The result follows again. Hence we may assume that $ n \leq 5 $, that is, $ M\in \{ \rm{PSp}_{6}(3) $, $ \rm{PSp}_{8}(3) $, $ \rm{PSp}_{10}(3) $, $ \rm{PSO}_{7}(3) $, $ \rm{PSO}_{9}(3) $, $ \rm{PSO}_{11}(3)\} $. We have explicit character tables for $ \rm{PSp}_{6}(3) $ and $ \rm{PSO}_{7}(3) $ in the \Atlas{} \cite{CCNPW85}, and using \Magma{} \cite{MAGMA} for the rest of the groups, we have our conclusion.

Suppose that $ Z(M)=1 $ and $ M\cong \mathrm{PSO}^{-}_{2n}(q)  $ with $ n\geq 4 $ and $ q\geq 3 $. By the proof of \cite[Theorem 2.5]{MSW94}, $ \chi $ is of $ l_{1} $-defect zero or of $ l_{2} $-defect zero. If $ q\neq 3 $, then there exists an odd prime $ l $ such that $ l\mid (q - 1) $ or $ l\mid (q + 1) $ and $ M $ is of $ l $-rank at least $ 3 $. The result then follows by Theorem \ref{p-rank3} and since $ \gcd(l_{1},l)=\gcd(l_{2},l)=1 $. Consider $ n\geq 4 $ and $ q = 3 $. We have an explicit character table for $ \mathrm{PSO}^{-}_{8}(3) $ in the \Atlas{} \cite{CCNPW85} and for $ \mathrm{PSO}^{-}_{10}(3) $ we obtain an explicit character table in \Magma {}. For $ n=6 $, the orders of $ T_{1} $ and $ T_{2} $ are divisible by two distinct primes and the result follows. We may assume that $ n\geq 7 $. Hence $ M $ is of $ l $-rank at least $ 3 $ when $ l\mid q^{2} + 1 $. Therefore, by Theorem \ref{p-rank3}, $ \chi $ vanishes on at least two elements of distinct orders.

Suppose that $ Z(M)=1 $ and $ M\cong \mathrm{PSO}^{+}_{2n}(q)  $ with $ n\geq 4 $ and $ q\geq 3 $. Assume that $ n $ is odd. Then the Zsigmondy primes $ l_{1} $ and $ l_{2} $ exist, and $ \chi $ vanishes on regular elements in $ T_{1} $ or in $ T_{2} $ by \cite[Lemma 5.3]{MNO00}. If $ q\neq 3 $, then there exists an odd prime $ l $ such that $ l\mid (q - 1) $ or $ l\mid (q + 1) $ and $ M $ is of $ l $-rank at least $ 3 $. Hence $ \chi $ vanishes on an $ l $-singular element by Theorem \ref{p-rank3}, and thus $ \chi $ vanishes on at least two elements of distinct orders. Let $ q = 3 $. If $ n\geq 7 $, then consider a torus corresponding to the cycle shape $ (-,(n-6)(2)(2)(2)) $. It follows that $ M $ is of $ l $-rank at least $ 3 $ with $ l\mid (q^{2} + 1) $ and by Theorem \ref{p-rank3}, $ \chi $ vanishes on an $ l $-singular element. Hence we may assume $ n\leq 5 $. Hence $ n=5 $ and so $ M\cong \mathrm{PSO}^{+}_{10}(3) $. Using \Magma{} \cite{MAGMA}, the result follows. 

Suppose that $ n\geq 4 $ is even. By \cite[Lemma 5.6]{MNO00}, $ \chi $ vanishes on regular elements of order $ l_{1} $ or $ l_{2} $ or $ \chi $ is of $ l_{3} $-defect zero where $ l_{3}=l(2n-4) $. An argument similar to that used above allows us to dispose of the case when $ q\neq3 $. Suppose $ q= 3 $. Now if $ n\geq 8 $, then $ M $ is of $ l $-rank at least $ 3 $ where $ l $ is an odd prime dividing $ q^{2} + 1 $. In particular, $ l=5 $. By Theorem \ref{p-rank3}, $ \chi $ vanishes on a $ 5 $-singular element. Since $ \gcd(l_{1},5)=\gcd(l_{2},5)=\gcd(l_{3},5)=1 $, the result follows. If $ n=4 $, that is, $ M\cong \mathrm{PSO}^{+}_{8}(3) $, then we have the explicit character table in the \Atlas{} \cite{CCNPW85} and if $ n=6 $, we obtain an explicit character table in \Magma {} for $ \mathrm{PSO}^{+}_{12}(3) $. This concludes our proof.
\end{proof}

\subsection{Exceptional groups}

\subsubsection{Exceptional groups of small Lie rank}
Since $ \PSL_{2}(8)\cong $ $^{2}\mathrm{G}_{2}(3)' $ and $ \PSU_{3}(3)\cong \mathrm{G}_{2}(2)' $, and $ \PSL_{2}(8) $, $ \PSU_{3}(3) $ were dealt with in \cite[Theorem 1.2]{Mad18q} and Proposition \ref{SU3}, respectively, we exclude them in this section.

Let $ \mathcal{L}=$  $\{^{2}\mathrm{B}_{2}(q^{2})\mid q^{2}=2^{2f+1}, f\geq 1\}\cup \{^{2}\mathrm{G}_{2}(q^{2})\mid q^{2}=3^{2f+1},f\geq 1\}\cup \{ ^{2}\mathrm{F}_{4}(q^{2})\mid q^{2}>2\}\cup \{ \mathrm{G}_{2}(q)\mid q\geq 3
\} \cup \{ ^{3}\mathrm{D}_{4}(q)\mid q\geq 2 \} $.

\begin{proposition}
Let $ M $ be a quasisimple group such that $ M/Z(M)\in \mathcal{L}\cup \{^{2}\mathrm{F}_{4}(2)'\} $. If \eqref{e:star} holds for $ M $, then $ M= $  $^{2}\mathrm{B}_{2}(8)$ with $ \chi(1)=14 $.
\end{proposition}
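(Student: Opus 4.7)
The plan is to follow the template established for the classical families in the preceding propositions: small-parameter groups are handled via explicit character tables (from the \Atlas{} \cite{CCNPW85}, \Magma{} \cite{MAGMA}, or the generic tables in \Chevie{} \cite{CHEVIE}), and for each remaining family I would produce two Zsigmondy primes attached to distinct cyclotomic factors of $|M|$ so that $\chi$ is forced to vanish on two classes of coprime element orders, contradicting condition~(i) of \eqref{e:star}.

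First, as in Lemma~\ref{Steinbergclassical}, the Steinberg character is of $p$-defect zero and vanishes on every $p$-singular element; each group in $\mathcal{L}$ possesses an element of order $pr$ for some prime $r\ne p$ (its prime graph has $p$ in a component of size at least~$2$, see \cite{Wil81}), so the Steinberg character fails \eqref{e:star} and may be excluded. For non-Steinberg $\chi$ I would treat each family in turn. In the Suzuki groups $^{2}\mathrm{B}_{2}(q^{2})$ the three non-trivial maximal tori are cyclic, and the generic character table in \Chevie{} makes the vanishing set of every irreducible character explicit; Zsigmondy primes dividing $q^{2}\pm\sqrt{2q^{2}}+1$ exist whenever $q^{2}>8$, forcing vanishing on two elements of coprime order. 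For $^{2}\mathrm{G}_{2}(q^{2})$, $^{2}\mathrm{F}_{4}(q^{2})$, $\mathrm{G}_{2}(q)$ and $^{3}\mathrm{D}_{4}(q)$, the analogous Zsigmondy primes $l_{1}$ and $l_{2}$ attached to the two ``large'' cyclotomic factors of $|M|$ exist, and the defect-zero and vanishing results of \cite{MSW94, MNO00, LM16} force $\chi$ to vanish on an $l_{1}$-singular class and on one of order coprime to $l_{1}$. The finitely many small cases --- $\mathrm{G}_{2}(q)$ for $q\in\{3,4,5\}$, $^{3}\mathrm{D}_{4}(2)$, $^{2}\mathrm{F}_{4}(2)'$, $^{2}\mathrm{G}_{2}(27)$ and $^{2}\mathrm{B}_{2}(32)$ --- I would verify directly from their character tables.

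For the non-trivial central extensions that occur, condition~(iii) of \eqref{e:star} pins $|Z(M)|$ to a $p$-power; faithfulness of $\chi$ then excludes unipotent characters, and the duality argument used in Proposition~\ref{sl3}, combining Lemma~\ref{vanishingresultTorus} with a suitable connected-reductive identification analogous to Lemma~\ref{reductivesubgroup=PGL}, forces $\chi$ to vanish on a regular element in each of two Zsigmondy-prime tori, contradicting~(i) again. The only surviving case is $M={}^{2}\mathrm{B}_{2}(8)$ with $\chi(1)=14$: from the \Atlas{} character table, $\chi$ vanishes on exactly three conjugacy classes, all of elements of order~$4$; $|\Out({}^{2}\mathrm{B}_{2}(8))|=3$; and $Z(M)=1$ is trivially cyclic of $2$-power order, so all three conditions of \eqref{e:star} are met.

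The main obstacle I anticipate lies in the analysis of $^{2}\mathrm{F}_{4}(q^{2})$ and $^{3}\mathrm{D}_{4}(q)$: their generic character tables are intricate, with many cuspidal and unipotent series whose vanishing sets must be tracked individually, and some characters may fail to be of $l$-defect zero for either of the two natural Zsigmondy primes. For these, one needs to enter the Lusztig parametrisation to identify the semisimple label $s^{*}$ and invoke the dual-torus method of Lemma~\ref{vanishingresultTorus} to force vanishing on two distinct regular classes.
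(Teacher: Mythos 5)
The paper settles this proposition purely by inspection of character tables: the \Atlas{} for the explicit small groups and the generic ordinary tables in \Chevie{} for the infinite families in $\mathcal{L}$; none of the Zsigmondy, defect-zero or dual-torus machinery you outline is actually deployed here. Your plan is broadly compatible with that, but two of its concrete steps are wrong.

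First, your exclusion of the Steinberg character via an element of order $pr$ with $r\neq p$ a prime fails for the Suzuki groups: in $^{2}\mathrm{B}_{2}(q^{2})$ the centralizer of every involution is a $2$-group, so $\{2\}$ is an isolated component of the prime graph and no element of order $2r$ exists. The exclusion still holds, but for a different reason: the Steinberg character has $2$-defect zero and therefore vanishes both on the involution class and on the classes of elements of order $4$, i.e.\ on elements of two distinct orders, which already violates condition (i) of \eqref{e:star}.

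Second, and more seriously, your verification of the unique surviving case is factually incorrect. The group $^{2}\mathrm{B}_{2}(8)$ has only \emph{two} classes of elements of order $4$, so ``three conjugacy classes, all of elements of order $4$'' cannot be right, and in fact the degree-$14$ characters take nonzero values (of absolute value $2$) on both of those classes. Since $14=2\cdot 7$ and $7$ divides $|{}^{2}\mathrm{B}_{2}(8)|=2^{6}\cdot 5\cdot 7\cdot 13$ exactly once, these characters are of $7$-defect zero and vanish precisely on the three classes of elements of order $7$. The prime witnessing \eqref{e:star} is therefore $p=7$, not $p=2$; conditions (ii) and (iii) then hold because the three vanishing classes do not exceed $|\Out({}^{2}\mathrm{B}_{2}(8))|=3$ and $Z(M)=1$. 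As written, your check of the one positive case of the proposition does not establish it, so this step needs to be redone against the actual \Atlas{} table.
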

\begin{proof}
The simple group $ M= $  $^{2}\rm{B}_{2}(8) $ satisfies the conclusion of our proposition from its character table in the \Atlas{} \cite{CCNPW85}. For the rest of the groups, using explicit character tables in \Atlas{} \cite{CCNPW85} and generic ordinary character tables in \Chevie{} \cite{CHEVIE}, we may conclude that every non-trivial character of $ M $ does not satisfy conditions of \eqref{e:star}. Hence the result follows. 
\end{proof}

\subsubsection{Exceptional groups of large Lie rank}

The table below shows the Zsigmondy primes $ l_{i} $ for the corresponding tori $ T_{i} $. It was shown in \cite{MNO00} that every non-trivial irreducible character which is not the Steinberg character, vanishes on an element of order $ l_{i} $ for some $ i=1, 2, 3 $.

\begin{center}\label{ZsigmondyTable}
Table 2\\
Tori and Zsigmondy primes for groups of Lie type
\end{center}
\begin{center}
\begin{tabular}{|c|c|c|c|c|c|c|}
\cline{1-7}
$ M $ & $ |T_{1}| $ & $ |T_{2}| $  & $ |T_{3}| $ & $ l_{1} $ & $ l_{2} $ & $ l_{3} $  \\
\cline{1-7}
$ F_{4}(q) $  &   $ \Phi_{12} $   &   $ \Phi _{8} $    &       & $ l(12) $ & $ l(8) $ & \\
\cline{1-7}
$ E_{6}(q) $  &   $ \Phi_{12}\Phi_{3} $   &   $ \Phi _{9} $    &  $\Phi_{8}\Phi_{2}\Phi_{1}$     & $ l(12) $ & $ l(9) $ & $ l(8) $\\
\cline{1-7}
$ ^{2}E_{6}(q) $  &   $ \Phi_{18} $   &   $ \Phi _{12}\Phi _{6} $    &  $\Phi_{8}\Phi_{2}\Phi_{1}$     & $ l(18) $ & $ l(12) $ & $ l(8) $\\
\cline{1-7}
$ E_{7}(q) $  &   $ \Phi_{18}\Phi_{2} $   &   $ \Phi _{14}\Phi _{2} $    &  $\Phi_{12}\Phi_{3}\Phi_{1}$     & $ l(18) $ & $ l(14) $ & $ l(12) $\\
\cline{1-7}
$ E_{8}(q) $  &   $ \Phi_{30} $   &   $ \Phi _{24} $   &  $\Phi_{20}$  & $ l(30) $ & $ l(24) $ & $ l(20) $\\
\hline
\end{tabular}
\end{center}

\begin{proposition}
Let $ M $ be a quasisimple exceptional finite group of Lie type over a field of characteristic $ p $ and of rank at least $ 4 $. Then every non-trivial faithful irreducible character of $ M $ fails to satisfy \eqref{e:star}.
\end{proposition}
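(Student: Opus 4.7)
The overall strategy is to show that any non-trivial faithful $\chi\in\Irr(M)$ vanishes on at least two elements of distinct orders, contradicting (i) of \eqref{e:star}. The two ingredients I would combine are: first, the results of \cite{MNO00} (analogous to \cite[Lemmas~5.3--5.6]{MNO00} used throughout the classical cases above), which guarantee that any non-trivial non-Steinberg character of $M$ vanishes on a regular element of order $l_i$ for some $i\in\{1,2,3\}$ (with $l_i$ as in Table~2, where only $l_1,l_2$ appear for $F_4(q)$); and second, Theorem~\ref{p-rank3}, which provides an $l$-singular zero of $\chi$ whenever $l>2$ is a prime coprime to $p=\mathrm{char}(\F_q)$ with $l$-rank of $M$ at least $3$, none of the small exceptions in that theorem applying (since the exceptions there force $M$ to be of type $A_{n-1}$, $Ly$, or $E_8$ at $l=5$ with $d_5(q)=4$, and the last can be inspected directly).

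The Steinberg character is disposed of first, as in Lemma~\ref{Steinbergclassical}: it has $p$-defect zero and so vanishes on every $p$-singular element; since the rank of $M$ is at least $4$, the prime graph connects $p$ to some other prime by \cite{Wil81}, giving an element of order $pr$ on which $\chi$ also vanishes, and this produces two distinct orders. For non-Steinberg $\chi$, fix an index $i$ with $\chi$ vanishing on an element of order $l_i$. Each $l_i$ in Table~2 is a Zsigmondy prime $l(d)$ with $d\geq 8$, so every $l_i$ is coprime to $q(q^2-1)$. I would then pick any odd prime $l\mid (q-1)(q+1)$ (which exists as soon as $q$ is not tiny), verify that the large semisimple rank of $M$ forces $M$ to have $l$-rank at least $3$ (there are plenty of tori corresponding to partitions with many parts of size $1$ or $2$), and invoke Theorem~\ref{p-rank3} to produce an $l$-singular zero of $\chi$. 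Because $\gcd(l,l_i)=1$, this gives two zeros of distinct orders.

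Two issues remain. First, when $Z(M)\neq 1$ (only occurring for $M/Z(M)\in\{E_6(q),{}^2E_6(q),E_7(q)\}$), condition (iii) of \eqref{e:star} forces $|Z(M)|$ to be a prime power $l^a$ with $l\mid (q\mp 1)$, $\chi$ to be non-unipotent (unipotent characters are trivial on $Z(M)$), and $\chi$ to vanish on an $l$-element. Here I would run the same Lusztig-series argument as in Propositions~\ref{sl3}, \ref{sln4non-unipotent} and \ref{SU4}: $\chi\in\mathcal{E}(M,s^*)$ for some semisimple $s^*\in M^*$, and if $\chi$ vanished on regular elements of neither $T_1$ nor $T_2$, then Lemma~\ref{vanishingresultTorus} would force $\mathbf{C}^\circ_{\mathcal{M}^*}(s^*)$ to contain duals of both $\mathcal{T}_1$ and $\mathcal{T}_2$; the exceptional analog of Lemma~\ref{reductivesubgroup=dualBCD}, which follows because no proper Levi of an exceptional group simultaneously contains the two $\Phi_{d}$-tori listed in Table~2 (the two cyclotomic polynomials having no common factor), then forces $s^*$ central and $\chi$ unipotent, a contradiction. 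Hence $\chi$ vanishes on an element of order $l_1$ or $l_2$, which is coprime to $l=|Z(M)|$. Second, for small $q$ (essentially $q\in\{2,3\}$) I expect to need a finite check: the groups $F_4(2), F_4(3), E_6(2), E_6(3), {}^2E_6(2), {}^2E_6(3), E_7(2), E_7(3), E_8(2), E_8(3)$ can be handled either by exhibiting a torus of the form $\Phi_1^k$ or $\Phi_2^k$ with $k\geq 3$ (forcing the requisite $l$-rank at $l=3$ or $l=5$) and applying Theorem~\ref{p-rank3}, or by direct inspection of the generic character tables in \Chevie{} \cite{CHEVIE}.

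The main obstacle is the central case: verifying the exceptional analog of Lemma~\ref{reductivesubgroup=dualBCD}, i.e., that the chosen pair of $F$-stable maximal tori $\mathcal{T}_1^*,\mathcal{T}_2^*$ of $\mathcal{M}^*$ is not jointly contained in any proper reductive subgroup of maximal rank. Once this combinatorial fact about centralisers of semisimple elements in exceptional groups is in hand, everything else follows the template already established in the classical cases.
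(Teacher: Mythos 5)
Your overall strategy for the case $Z(M)=1$ --- a Zsigmondy-prime zero supplied by the results of \cite{MNO00}, combined with an $l$-singular zero from Theorem~\ref{p-rank3} for a suitable odd prime $l$ dividing a small cyclotomic factor of $|M|$, plus a finite check for $q\in\{2,3\}$ --- is essentially the paper's argument, and it is sound. (Two minor imprecisions: the paper folds the Steinberg character into the \cite{MNO00} dichotomy via $\gcd(p,l)=1$ rather than treating it separately, and for $F_4(q)$ the relevant lemma of \cite{MNO00} also allows $\chi$ to vanish only on $l(3)$-elements, not just $l(12)$- or $l(8)$-elements; neither affects the coprimality argument.)

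The genuine divergence, and the one place your proposal has a hole, is the central case. You propose to imitate Propositions~\ref{sl3}, \ref{sln4non-unipotent} and \ref{SU4} by running the Lusztig-series argument and invoking an ``exceptional analog of Lemma~\ref{reductivesubgroup=dualBCD}'', which you correctly flag as unproved. As written this is a gap: you would need to verify that no proper connected reductive maximal-rank subgroup of $\mathrm{E}_6$, ${}^2\mathrm{E}_6$ or $\mathrm{E}_7$ (centralizers of semisimple elements need not be Levi subgroups, so ``no proper Levi contains both tori'' is not enough) contains conjugates of both chosen maximal tori. The paper avoids this entirely. Theorem~\ref{p-rank3} is stated for quasisimple groups, so it applies directly to $M$ when $Z(M)\neq 1$; since $|Z(M)|\in\{2,3\}$ in these cases and \eqref{e:star} already forces $\chi$ to vanish on a $|Z(M)|$-element, it suffices to pick an odd prime $l$ coprime to $|Z(M)|$ with the $l$-rank of $M$ at least $3$ (the paper takes $l\mid q^{2}+q+1$ for $\mathrm{E}_6$, $l\mid q^{2}-q+1$ for ${}^2\mathrm{E}_6$, and $l\mid q^{3}+1$ for $\mathrm{E}_7$) and apply Theorem~\ref{p-rank3} once more to obtain a second zero of coprime order. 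So your ``main obstacle'' is avoidable: replace the Lusztig-series detour in the central case by this second application of Theorem~\ref{p-rank3} and your proof closes.
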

\begin{proof}
Note that the group $ M $ must be one of these type: $ \rm{F}_{4} $, $ \rm{E}_{6} $, $ ^{2}\rm{E_{6}} $, $ \rm{E_{7}} $ or $ \rm{E}_{8} $.

Suppose that $ M/Z(M)\cong \mathrm{F}_{4}(2) $. Then using the character tables in the \Atlas{} \cite{CCNPW85}, the result follows. Let $ M=\mathrm{F}_{4}(q) $, $ q\geq 3 $. Then $ M $ is simple. From \cite[Lemma 5.9]{MNO00} we have that $ \chi $ vanishes on regular elements of order $ l_{1} $, $ l_{2} $ or $ l_{3}=l(3) $, or $ \chi $ is of $ p $-defect zero if $ \chi $ is the Steinberg character. On the other hand, there exists an odd prime $ l $ such that $ l\mid (q-1) $ or $ l\mid (q+1) $ and $ M $ is of $ l $-rank at least $ 3 $ unless $ q=3 $. If $ q\neq 3 $, then by Theorem \ref{p-rank3}, we have that $ \chi $ vanishes on an $ l $-singular element and since $ \gcd(l_{1},l)=\gcd(l_{2},l)=\gcd(l_{3},l)=\gcd(p,l)=1 $, the result follows. If $ q = 3 $, then using an explicit character table of $ M=\mathrm{F}_{4}(3) $ from \Magma{} \cite{MAGMA}, the result follows. 

Now suppose $ M=\mathrm{E_{6}}(q)$, $ q\geq 2 $ with $ Z(M)=1 $. From \cite[Lemma 5.9]{MNO00} we have that $ \chi $ vanishes on regular elements of order $ l_{1} $, $ l_{2} $ or $ l_{3} $, or $ \chi $ is of $ p $-defect zero if $ \chi $ is the Steinberg character. On the other hand, $ M $ is of $ l $-rank at least $ 3 $ for an odd prime $ l $ such that $ l\mid (q^{3} - 1) $. By Theorem \ref{p-rank3}, $ \chi $ vanishes on an $ l $-singular element. Since $ \gcd(l_{1}, l)=\gcd(l_{2},l)=\gcd(l_{3},l)=\gcd(p,l)=1 $, we are done. Now suppose that $ Z(M)\neq 1 $, that is, $ |Z(M)|=3 $. By \eqref{e:star}, $ \chi $ vanishes on a $ 3 $-element. Using the above argument, $ \chi $ vanishes on an $ l $-singular element, where $ l\neq3 $ is an odd prime such that $ l\mid (q^{2} + q + 1) $. Since $ \gcd(3,l)=1 $, the result follows.

Suppose that $ M=\mathrm{^{2}E_{6}}(q)$, $ q\geq 2 $ with $ Z(M)=1 $. By \cite[Lemma 5.9]{MNO00}, we have that $ \chi $ vanishes on regular elements of order $ l_{1} $, $ l_{2} $ or $ l_{3} $, or $ \chi $ is of $ p $-defect zero if $ \chi $ is the Steinberg character. On the other hand, $ M $ is of $ l $-rank at least $ 3 $ for an odd prime $ l\neq 3 $ such that $ l\mid (q^{2} - q + 1) $. By Theorem \ref{p-rank3}, $ \chi $ vanishes on an $ l $-singular element. Since $ \gcd(l_{1}, 1)=\gcd(l_{2},l)=\gcd(l_{3},l)=\gcd(p,l)=1 $, the result follows. Now suppose that $ Z(M)\neq 1 $, so $ |Z(M)|=3 $ and $ q=3b-1 $ for some positive integer $ b\geq 2 $ using the \Atlas{} \cite{CCNPW85} for $ \rm{^{2}E_{6}}(2) $. We may assume that $ q\geq 5 $.  It is sufficient to show that $ M $ is of $ l $-rank at least $ 3 $ for some prime $ l\neq 3 $. The candidate for $ l $ is  an odd prime $ l $ such that $ l\mid (q^{2} - q + 1) $. Finally let $ M/Z(M)=\mathrm{^{2}E}_{6}(2) $ with $ |Z(M)|=2 $ since by \eqref{e:star}(iii), $ |Z(M)| $ is a prime power. Then the character table in the \Atlas{} \cite{CCNPW85} concludes this case.

We will use the same arguments for $ M=\mathrm{E}_{7}(q) $ and $ M=\mathrm{E}_{8}(q) $ in the case when $ M $ is simple. By \cite[Lemma 5.9]{MNO00} we have that $ \chi $ vanishes either on regular elements of order $ l_{1} $, $ l_{2} $ or $ l_{3} $, or $ \chi $ is of $ p $-defect zero if $ \chi $ is the Steinberg character. By Theorem \ref{p-rank3}, $ \chi $ vanishes on an $ l $-singular element, where $ l $ is an odd prime such that $ l\mid (q^{2} - q + 1) $. Hence the result follows.

 Now suppose $ M=\mathrm{E}_{7}(q) $ and $ Z(M)\neq 1 $. By \eqref{e:star}, $ \chi $ vanishes on a $ 2 $-element. Using Theorem \ref{p-rank3}, $ \chi $ vanishes on an $ l $-singular element, where $ l $ is an odd prime such that $ l\mid (q^{3} + 1) $. Hence the result follows.
\end{proof}

\section{Non-solvable groups with a character vanishing on one class} 

\subsection{Almost simple groups of Lie type}

We are in a position to prove Theorem \ref{classificationoneclass}.

\begin{proof}[{\textbf{Proof of Theorem \ref{classificationoneclass}}}] 
Suppose that $ \chi \in \Irr (G) $ is faithful, primitive and vanishes on exactly one conjugacy class. By \cite[Theorem 3.3]{Mad18q}, there exist normal subgroups $ M $ and $ Z $ of $ G $ such that $ G/Z $ is an almost simple group and $ M $ is a quasisimple group. In particular, it was shown in \cite{Mad18q} that the normal subgroup $ M $ necessarily satisfies \eqref{e:star}. For $ M/Z $ isomorphic to a sporadic simple group, alternating group $ \A_{n} $, $ n\geq 5 $ or $ \PSL_{2}(q) $, $ q\geq 4 $, the result follows from \cite[Theorems 1.2 and 1.3]{Mad18q}. For $ M/Z $ isomorphic to a finite group of Lie type distinct from $ \PSL_{2}(q) $, $ q\geq 4 $, the result follows from our results in Section \ref{vanishingonsameorder}.

Conversely suppose that one of (1)-(7) in the hypothesis holds. Then for (1)-(6), the result follows from \cite[Theorem 1.3]{Mad18q}. Now when $ G=$  $^{2}\rm{B_{2}}(8){:}3 $ with $ \chi(1)=14 $, by the same argument above \cite[Theorem 5.2]{Mad18q}, the primitivity of $ \chi $ follows.
\end{proof}

\section*{Acknowledgements}
This work will form part of the author's PhD thesis, and he would like to thank his advisors, Professors Tong-Viet and van den Berg, and Dr. Le, for their helpful suggestions. He would also like to thank Professor Malle for the careful reading of an earlier version of this article and pointing out some gaps in the proofs.

\end{document}